\begin{document}


\title{Interface dynamics of a metastable \\
mass-conserving spatially extended diffusion}
\author{Nils Berglund, S\'ebastien Dutercq}
\date{}   

\maketitle

\begin{abstract}
\noindent
We study the metastable dynamics of a discretised version of the mass-conserving
stochastic Allen--Cahn equation. Consider a periodic one-dimensional lattice
with $N$ sites, and attach to each site a real-valued variable, which can be
interpreted as a spin, as the concentration of one type of metal in an alloy, or
as a particle density. Each of these variables is subjected to a local force
deriving from a symmetric double-well potential, to a weak ferromagnetic
coupling with its nearest neighbours, and to independent white noise. In
addition, the dynamics is constrained to have constant total magnetisation or
mass. Using tools from the theory of metastable diffusion processes, we show
that the long-term dynamics of this system is similar to a Kawasaki-type
exchange dynamics, and determine explicit expressions for its transition
probabilities. This allows us to describe the system in terms of the dynamics of
its interfaces, and to compute an Eyring--Kramers formula for its spectral gap.
In particular, we obtain that the spectral gap scales like the inverse system
size squared.
\end{abstract}

\leftline{\small{\it Date.\/} August 18, 2015.}
\leftline{\small 2010 {\it Mathematical Subject Classification.\/} 
60J60,   
60K35    
(primary), 
82C21,   
82C24    
(secondary)
}
\noindent{\small{\it Keywords and phrases.\/}
Metastability,
Kramers' law, 
stochastic exit problem, 
Allen--Cahn equation, 
Kawasaki dynamics, 
interface,
spectral gap
.}  


\section{Introduction}
\label{sec_intro}

The low-temperature dynamics of spatially extended systems often displays
metastability: these systems can spend considerable amounts of time in
configurations that have higher energy than their ground state. Well-known
examples of such phenomena are supercooled water, which remains liquid at
temperatures below $0$\degree C, a supersaturated gas, which does not
condensate although this would be thermodynamically more favourable, and a
wrongly magnetised ferromagnet. 

Much research effort has been dedicated to the study of metastable lattice
systems, such as the Ising model at low temperature. This has led to very
precise results on the time the system spends in metastable equilibrium, on the
way it moves from a metastable to a stable state by creating a critical droplet,
and on the shape of this droplet. See for instance~\cite{denHollander04} for a
review on Ising models with Glauber (spin flip) dynamics and lattice gases with 
Kawasaki (particle/hole exchange) dynamics, and~\cite{OlivieriVares05} for
results based on the theory of large deviations. A considerably more difficult
case arises when there is no underlying lattice given a priori, but particles
instead evolve in $\R^d$, and one wants to describe processes such as
crystallisation. For recent results in this direction, see for
instance~\cite{Jansen_Jung_14,Flatley_Theil_15,denHollander_Jansen_15}. 

Another type of models whose metastable behaviour is understood in detail are
diffusion processes described by stochastic differential equations with weak
noise. A general large-deviation approach to these equations goes back to the
work of Freidlin and Wentzell~\cite{FW}, which provides many results on
transition times between attractors and on the long-time dynamics. In the case
of reversible diffusions (that is, those satisfying a detailed balance
condition), metastable timescales are governed by the so-called Eyring--Kramers
formula, derived heuristically in~\cite{Eyring,Kramers}, and first proved in a
mathematically rigorous way in~\cite{BEGK,BGK}. See for
instance~\cite{Berglund_irs_MPRF} for a recent survey on various methods of
proof and extensions of the result. 

A spatially extended system of coupled diffusions, which can be considered of
intermediate difficulty between lattice systems with discrete spins and systems
of particles evolving in $\R^d$, was introduced in~\cite{BFG06a,BFG06b}. In this
model, the spins are still attached to a lattice (which is periodic and
one-dimensional of size $N$), but they take values in $\R$ instead of
$\set{-1,+1}$. Each spin feels a local symmetric double-well potential with
minima in $\pm1$, and is coupled ferromagnetically to its nearest neighbours. In
addition, each spin is subjected to independent white noise. For weak coupling,
the dynamics of this system was shown to be similar to that of an Ising model
with Glauber spin-flip dynamics. Indeed, the energy of configurations increases
with the number of \emph{interfaces}, defined as pairs of neighbouring spins
having different sign. As a consequence, the system favours configurations with
few clusters of spins having the same sign. On the other hand, when the coupling
scales like $N^2$, the system converges as $N\to\infty$ to an Allen--Cahn SPDE
with space-time white noise, whose metastable behaviour was studied
in~\cite{BG12a,Barret_2015}. 

A natural question that arises is whether one can construct a similar system,
with continuous spins attached to a discrete lattice, but whose dynamics for
weak coupling resembles Kawasaki exchange dynamics instead of Glauber spin-flip
dynamics. In other words, one would like to impose that the total magnetisation
(or the total mass in lattice gas terminology) is conserved. A simple way of
doing this is to start with the potential energy of the system considered
in~\cite{BFG06a,BFG06b}, and to constrain it to the hypersurface where the sum
of all spins is constant, say equal to zero. This is nothing but the
discretised version of the mass-conserving Allen--Cahn equation introduced
in~\cite{Rubinstein_Sternberg_92}. The objective of the present work is to
study the metastable dynamics of this model. 

It is quite easy to see that in the uncoupled limit, the potential energy of the
constrained system is minimal when exactly half the sites have value $+1$, while
the other half have value~$-1$. Such states have a clear particle system
interpretation: just consider each $+1$ as a particle and each $-1$ as a hole.
As in the unconstrained case, for weak positive coupling, the energy of
configurations increases with the number of interfaces. Therefore the ground
state consists of the configurations having exactly one cluster of particles and
one cluster of holes, separated by two interfaces. Higher-energy configurations
have more clusters and more interfaces. Thus if the system starts in an excited
state with many interfaces, one expects that its clusters will gradually merge,
reducing the number of interfaces, until the ground state is reached
(\figref{fig:interface_dynamics}). 

While our analysis will show that this picture is essentially correct, there is
a complication due to the fact that particle/hole configurations are \emph{not}
the only local minima of the potential energy. Somewhat unexpectedly, there
turn out to be many more \lq\lq spurious\rq\rq\ local minima, whose coordinates
are not close to $\pm1$. The way around this difficulty is to realise that all
spurious configurations have a higher energy than the particle/hole
configurations. Therefore the long-term dynamics will spend most of the time
near the particle/hole configurations, with occasional transitions between
them. Our main result is the characterisation of this effective dynamics. 

\begin{figure}
\centerline{
\includegraphics*[clip=true,width=0.8\textwidth]{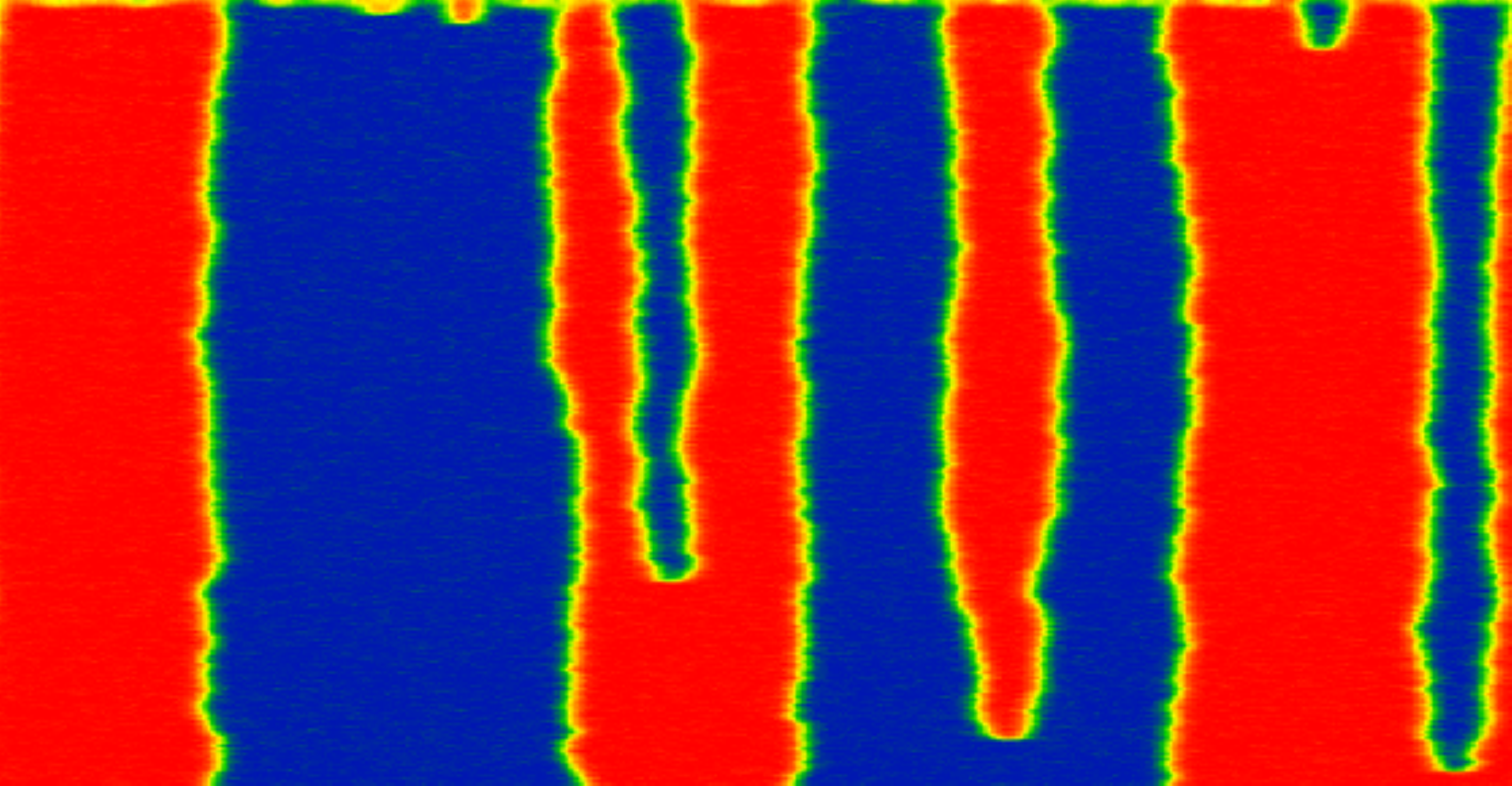} 
}
\caption[]{Example of evolution of the constrained system~\eqref{eq:model07}
with $N=512$ sites. Space goes from left to right, and time from top to bottom.
Blue and red correspond to spin values close to $-1$ and $1$ respectively. The
system starts in a configuration with $40$ interfaces, many of which disappear
quickly. At the end of the simulation, the number of interfaces has been reduced
to $4$. Parameter values are $\eps=0.02$ and $\gamma=16$. This coupling
intensity, which is much larger than considered in this work, has been chosen to
obtain transitions on an observable timescale.  
}
\label{fig:interface_dynamics}
\end{figure}

This paper is organised as follows. In Section~\ref{sec_model}, we give a
precise definition of the considered model. In Section~\ref{sec_landscape}, we
describe the potential landscape of the model, meaning that we find all local
minima of the potential energy, and describe how they are connected by saddles
with one unstable direction. Section~\ref{sec_meta} uses the notion of
metastable hierarchy to show that the dynamics indeed concentrates on
particle/hole configurations, and derives the effective dynamics on these
states. In Section~\ref{sec_dynamics} we use this information to characterise
the evolution of interfaces, and we derive a sharp estimate for the spectral
gap of the system, which determines the relaxation time to equilibrium. 
Section~\ref{sec_conclusion} contains concluding remarks, while most
proofs are postponed to the appendix. 

\medskip

\noindent
\textbf{Notations:} If $i\leqs j$ are integers, $\intint{i}{j}$ denotes the set
$\set{i,i+1, \dots, j}$. The cardinality of a finite set $A$ is denoted by
$\abs{A}$, and $A=B\cupdot C$ indicates that $A=B\cup C$ with $B$ and $C$
disjoint. We write $1_A$ for the indicator function of the set $A$, $\one_n$ or
simply $\one$ for the identity matrix of size $n\times n$, and $\vone$ for a
column vector with all components equal to $1$. Finally, we write
$\expecin{\mu}{\cdot}$ for expectations with respect to the law of the diffusion
process started with distribution $\mu$, and $\expecin{x}{\cdot}$ in case $\mu$
is concentrated in a single point $x$.

\medskip

\noindent
\textbf{Acknowledgements:} The idea of studying the constrained process
considered in this work goes back to a question Erwin Bolthausen asked after a
talk given in Z\"urich by the first author on the unconstrained model studied 
in~\cite{BFG06a,BFG06b}. 


\section{Definition of the model}
\label{sec_model}

Consider the potential $V_\gamma:\R^N\to\R$ defined by 
\begin{equation}
 \label{eq:model01}
 V_\gamma(x) = \sum_{i=1}^N U(x_i)
 + \frac{\gamma}{4} \sum_{i=1}^N (x_{i+1}-x_i)^2\;,
 \qquad
 U(\xi)=\frac14\xi^4 - \frac12\xi^2\;,
\end{equation} 
where $N\geqs2$ is an integer and $\gamma\geqs0$ is a coupling parameter. We
also make the identification $x_{N+1}=x_1$, that is, we consider periodic
boundary conditions. Thus $x$ can be considered either as an element of $\R^N$,
or as an element of $\R^\Lambda$, where $\Lambda$ is the periodic
lattice $\Z/N\Z$. 

The potential $V_\gamma$ allows to define a diffusion process by the stochastic
differential equation 
\begin{equation}
 \label{eq:model02}
 \6x_t = -\nabla V_\gamma(x_t)\6t + \sqrt{2\eps}\6W_t\;,
\end{equation} 
where $W_t$ is an $N$-dimensional Wiener process, and $\eps\geqs0$ is a small
parameter measuring noise intensity. The dynamics of this system has already
been studied in~\cite{BFG06a,BFG06b}. Here we are interested in a different
system, obtained by constraining the diffusion to the hyperplane 
\begin{equation}
 \label{eq:model03}
 S = \biggsetsuch{x\in\R^N}{\sum_{i=1}^N x_i=0}\;.
\end{equation} 
To define its dynamics, let $R$ be an orthogonal matrix mapping the unit
normal vector to $S$ to the $N$th canonical basis vector $e_N$. Let $\widehat
V_\gamma(y)=V_\gamma(R^{-1}y)$, and define the dynamics by 
\begin{align}
\nonumber
\6y_{i,t} &= -\frac{\partial\widehat V_\gamma(y)}{\partial y_{i,t}} \6t +
\sqrt{2\eps} \6W_{i,t}\;, 
 \qquad 
 i=1,\dots,{N-1}\;, \\
 y_{N,t} &= 0\;, 
 \label{eq:model04}
\end{align} 
where $W_{1,t},\dots,W_{N-1,t}$ are independent Brownian motions. Then $x_t$ is
by definition the process $x_t=R^{-1}y_t$. It is easy to check that this
definition does not depend on the choice of $R$.  

An equivalent way of defining the dynamics is to write 
\begin{equation}
 \label{eq:model05}
 \6y_t = \bigbrak{-\nabla \widehat V_\gamma(y_t) + \pscal{\nabla \widehat
V_\gamma(y_t)}{e_N}e_N} + \sqrt{2\eps} \6W_t
\end{equation} 
where $W_t$ is an $(N-1)$-dimensional Wiener process. Indeed, the extra term
precisely ensures that the $N$-th component of the drift term
vanishes. Transforming back, we obtain the equation 
\begin{equation}
 \label{eq:model06}
 \6x_t = \biggbrak{-\nabla V_\gamma(x_t) + \frac{1}{N} \pscal{\nabla
V_\gamma(x_t)}{\vone}\vone} \6t + \sqrt{2\eps}\6\widetilde W_t\;,
\end{equation} 
where $\vone$ denotes the vector with all components equal to $1$ (hence the
normalisation $1/N$), and $\widetilde W_t = R^{-1}W_t$ is a Brownian motion on
$S$. When written in components, the resulting dynamics takes the form 
\begin{equation}
 \label{eq:model07}
 \6x_{i,t} = \biggbrak{f(x_{i,t}) +
\frac{\gamma}{2}(x_{i+1,t}-2x_{i,t}+x_{i-1,t}) 
- \frac1N \sum_{j=1}^N f(x_{j,t})}\6t 
+ \sqrt{2\eps} \6\widetilde W_{j,t}
\end{equation} 
where $f(\xi)=-U'(\xi)=\xi-\xi^3$ 
(and the $\widetilde W_{j,t}$ are no longer independent). Note that this is a
discretised version of the mass-conserving Allen--Cahn SPDE
\begin{equation}
 \label{eq:model08}
 \partial_tu(t,x) = \gamma\Delta u(t,x) + f(u(t,x))
 - \frac1L \int_0^L f(u(t,y))\6y
 + \sqrt{2\eps} \, \xi(t,x)
\end{equation} 
with space-time white noise $\xi$ on $S$. 
The nonlocal integral term indeed ensures that the total mass $\int_0^L
u(t,x)\6x$ is conserved. This equation was introduced
in~\cite{Rubinstein_Sternberg_92} in the case without noise, and considered
recently in~\cite{ABBK_15} in the case with noise. 

Systems of the form~\eqref{eq:model02} admit a unique invariant
probability measure with density 
\begin{equation}
 \label{eq:model09}
 \mu(x) = \frac1Z \e^{-V(x)/\eps}\;,
\end{equation} 
where $Z$ is the normalisation constant, and are reversible with respect to
$\mu$. Analogous statements hold true for the system constructed here
(except that $\mu$ is concentrated on the hyperplane $S$). The questions we thus
ask are the following:
\begin{itemiz}
\item 	How long does the system take to relax to equilibrium?
\item 	What are the typical paths taken to achieve equilibrium, when starting
in an atypical configuration?
\item 	Can the system be approximated by a coarse-grained process visiting
only local minima of the potential? What does this coarse-grained process look
like?
\end{itemiz}


\section{Potential landscape}
\label{sec_landscape}


\subsection{The transition graph}
\label{ssec_tgraph}

For a general system of the form~\eqref{eq:model02}, let 
\begin{equation}
 \label{eq:tgraph01}
 \cS = \bigsetsuch{x\in\R^N}{\nabla V_\gamma(x)=0}
\end{equation} 
be the set of all stationary points of $V_\gamma$. A stationary point
$x^\star\in\cS$ is called \emph{non-degenerate} if its Hessian matrix $\nabla^2
V_\gamma(x^\star)$ has a nonzero determinant. We will assume for simplicity
that all stationary points of $V_\gamma$ are nondegenerate (see
however~\cite{BG2010} for results on systems with degenerate stationary
points). 

The \emph{Morse index} of a nondegenerate stationary point $x^\star$ is the
number of negative eigenvalues of the Hessian $\nabla^2V_\gamma(x^\star)$ (i.e.,
the number of directions in which $V_\gamma$ decreases near $x^\star$). For each
$k\in\intint{0}{N}$, let $\cS_k$ denote the set of stationary points of index
$k$. The set $\cS_0$ of local minima of $V_\gamma$ and the set $\cS_1$ of
saddles of index $1$ (or $1$-saddles) are the most important for the stochastic
dynamics for small $\eps$. 

By the stable manifold theorem, each $1$-saddle has a one-dimensional unstable
manifold consisting in two connected components. Along each component, the value
of $V_\gamma$ has to decrease, and therefore (since $V_\gamma$ is confining)
both components have to converge to a local minimum of $V_\gamma$. Let
$\cG=(\cS_0,\cE)$ be the unoriented graph in which two elements of $\cS_0$ are
connected by an edge in $\cE$ if and only if there exists a $1$-saddle
$z\in\cS_1$ whose unstable manifold converges to these local minima. 

Roughly speaking, the stochastic system behaves for small noise intensity
$\eps$ like a Markovian jump process (or continuous-time Markov chain) on
$\cS_0$, with jump rates related to the potential differences between local
minima and $1$-saddles. This is the basic idea implemented
in~\cite[Chapter~6]{FW}, and there are many refinements on which we will
comment in more detail below. 

\begin{figure}[tb]
\begin{center}
\scalebox{0.6}{
\begin{tikzpicture}
\tikzset{myline/.style={draw=white,double=black,double distance=1pt,very thick}}

\node[white,shape=rectangle, minimum size=1.6cm] (0000) at (0,0) {};
\pos{0}{0};
\isquare{white}{white}{white}{white}{0.4}{3.5pt};


\node[white,shape=rectangle, minimum size=1.6cm] (1000) at (3.5,3) {};
\pos{3.5}{3};
\isquare{black}{white}{white}{white}{0.4}{3.5pt};

\node[white,shape=rectangle, minimum size=1.6cm] (0100) at (3.5,1) {};
\pos{3.5}{1};
\isquare{white}{black}{white}{white}{0.4}{3.5pt};

\node[white,shape=rectangle, minimum size=1.6cm] (0010) at (3.5,-1) {};
\pos{3.5}{-1};
\isquare{white}{white}{black}{white}{0.4}{3.5pt};

\node[white,shape=rectangle, minimum size=1.6cm] (0001) at (3.5,-3) {};
\pos{3.5}{-3};
\isquare{white}{white}{white}{black}{0.4}{3.5pt};

\draw[myline] (0000) -- (1000);
\draw[myline] (0000) -- (0100);
\draw[myline] (0000) -- (0010);
\draw[myline] (0000) -- (0001);


\node[white,shape=rectangle, minimum size=1.6cm] (1100) at (7,2) {};
\pos{7}{2};
\isquare{black}{black}{white}{white}{0.4}{3.5pt};

\node[white,shape=rectangle, minimum size=1.6cm] (0110) at (7,0) {};
\pos{7}{0};
\isquare{white}{black}{black}{white}{0.4}{3.5pt};

\node[white,shape=rectangle, minimum size=1.6cm] (0011) at (7,-2) {};
\pos{7}{-2};
\isquare{white}{white}{black}{black}{0.4}{3.5pt};

\node[white,shape=rectangle, minimum size=1.6cm] (1001) at (7,-4) {};
\pos{7}{-4};
\isquare{black}{white}{white}{black}{0.4}{3.5pt};

\draw[thick] (1000.south east) -- (1001.north
west);
\draw[myline] (1000) -- (1100);
\draw[myline] (0100) -- (1100);
\draw[myline] (0100) -- (0110);
\draw[myline] (0010) -- (0110);
\draw[myline] (0010) -- (0011);
\draw[myline] (0001) -- (0011);
\draw[myline] (0001) -- (1001);


\node[white,shape=rectangle, minimum size=1.6cm] (1101) at (10.5,3) {};
\pos{10.5}{3};
\isquare{black}{black}{white}{black}{0.4}{3.5pt};

\node[white,shape=rectangle, minimum size=1.6cm] (1110) at (10.5,1) {};
\pos{10.5}{1};
\isquare{black}{black}{black}{white}{0.4}{3.5pt};

\node[white,shape=rectangle, minimum size=1.6cm] (0111) at (10.5,-1) {};
\pos{10.5}{-1};
\isquare{white}{black}{black}{black}{0.4}{3.5pt};

\node[white,shape=rectangle, minimum size=1.6cm] (1011) at (10.5,-3) {};
\pos{10.5}{-3};
\isquare{black}{white}{black}{black}{0.4}{3.5pt};

\draw[thick] (1001.north east) -- (1101.south west);
\draw[myline] (1100) -- (1101);
\draw[myline] (1100) -- (1110);
\draw[myline] (0110) -- (1110);
\draw[myline] (0110) -- (0111);
\draw[myline] (0011) -- (0111);
\draw[myline] (0011) -- (1011);
\draw[myline] (1001) -- (1011);


\node[white,shape=rectangle, minimum size=1.6cm] (1111) at (14,0) {};
\pos{14}{0};
\isquare{black}{black}{black}{black}{0.4}{3.5pt};

\draw[myline] (1110) -- (1111);
\draw[myline] (0111) -- (1111);
\draw[myline] (1011) -- (1111);
\draw[myline] (1101) -- (1111);

\end{tikzpicture}
}
\end{center}
\vspace{-3mm}
\caption[]{Transition graph of the unconstrained system for $N=4$ and
$\gamma=0$. Black and white circles represent respectively coordinates equal to
$1$ and to $-1$. The two configurations $(1,-1,1,-1)$ and $(-1,1,-1,1)$ are not
shown, because they correspond to non-optimal transitions as soon as
$\gamma>0$.}
\label{fig:unconstrained} 
\end{figure}
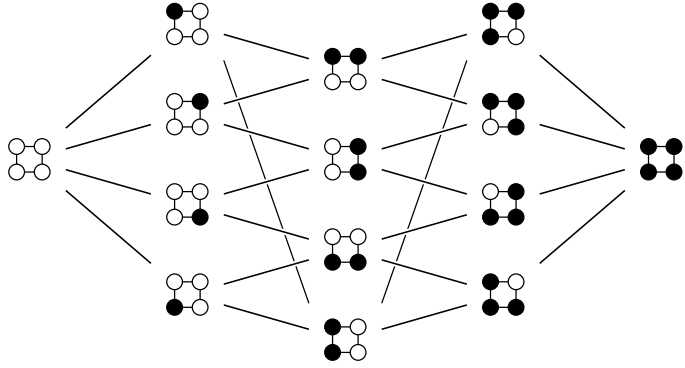

In the case of the potential~\eqref{eq:model01} without constraint, the
potential landscape has been analysed in~\cite{BFG06a}. In particular, the
following properties have been obtained:
\begin{itemiz}
\item 	If $\gamma=0$, the set of stationary points is given by
$\cS=\set{-1,0,1}^N$. The local minima are given by $\cS_0=\set{-1,1}^N$ and
the $1$-saddles are those stationary points that have exactly one coordinate
equal to 0. They connect the local minima obtained by replacing the 0
coordinate by $-1$ or $+1$. Thus the graph $\cG$ is an $N$-dimensional
hypercube, with transitions consisting in the reversal of the sign of one
coordinate, which can be interpreted as spin flips. 

\item 	There exists a critical coupling $\gamma^*(N)$, satisfying
$\gamma^*(N)\geqs\frac14$ for all $N$, such that the transition graph $\cG$
is the same for all $\gamma\in[0,\gamma^*(N))$. Thus the local minima and
allowed transitions are the same for weak positive coupling as in the uncoupled
case. What changes, however, is that some transitions are easier than others
when $\gamma>0$: the systems prefers transitions that minimise the number of
interfaces, that is, the number of nearest neighbours with a different sign
(\figref{fig:unconstrained}). The stochastic dynamics is thus very close to the
one of an Ising model with Glauber spin-flip dynamics. 

\item 	For $\gamma$ increasing beyond $\gamma^*(N)$, the system undergoes a
number of bifurcations that reduce the number of stationary points. In
particular, for $\gamma>1/(2\sin^2(\pi/N))$ the system synchronises: there are
only two local minima given by $\pm(1,1,\dots,1)$, connected by the only
$1$-saddle which is at the origin.
\end{itemiz}

Our aim is now to obtain similar results for the graph $\cG$ of the constrained
system, starting with the uncoupled case $\gamma=0$ and then moving to small
positive $\gamma$. 


\subsection{The uncoupled case}
\label{ssec_gamma0}

We consider in this section the dynamics of the constrained system in the
uncoupled case $\gamma=0$. The above definitions of $\cS_0$, $\cS_1$ and $\cG$
can be adapted to the constained case, either by considering the $N-1$ first
equations in~\eqref{eq:model04}, or by solving a constrained optimisation
problem. In particular, the stationary points have to satisfy 
\begin{equation}
\label{eq:gamma0_01} 
\nabla V_0(x) = \lambda\vone
\end{equation}
for a Lagrange multiplier $\lambda\in\R$ (this is indeed consistent
with~\eqref{eq:model06}). In addition, the constraint $x\in S$ has to be
satisfied. 

In components, the condition~\eqref{eq:gamma0_01} becomes
\begin{equation}
\label{eq:gamma0_02} 
 x_i^3 - x_i = \lambda\;, 
 \qquad
 i = 1,\dots, N\;.
\end{equation} 
Let $\lambdac=\frac{2}{3\sqrt{3}}$. The equation
$\xi^3-\xi=\lambda$ has three real solutions if $\abs{\lambda}<\lambdac$,
two real solutions if $\abs{\lambda}=\lambdac$ and one real solution
otherwise. The last case is incompatible with the constraint $x\in S$, while
the second case can only occur if $N$ is a multiple of $3$, because then the
two solutions of $\xi^3-\xi=\lambda$ have a $(-2:1)$ ratio. 

We henceforth assume that $\abs{\lambda}<\lambdac$, and denote by
$\alpha_0,\alpha_1,\alpha_2$ the distinct roots of $\xi^3-\xi-\lambda$. Then
each $x_i$ solving~\eqref{eq:gamma0_02} has to be equal to one of the 
$\alpha_j$. We let $a_j$ be the number of occurrences of $\alpha_j$, and reorder
the $\alpha_j$ in such a way that $a_0\leqs a_1\leqs a_2$. We denote such a
stationary point by the triple $(a_0,a_1,a_2)$. Observe that we necessarily have
$a_0+a_1+a_2=N$. 

\begin{prop}[Local minima and $1$-saddles for $\gamma=0$]
\label{prop_saddles} 
Assume that $N$ is not a multiple of $3$, and let $x^\star$ be a critical point
with triple $(a_0,a_1,a_2)$. Then 
\begin{itemiz}
\item 	if $2a_1 > a_0 + a_2$, then $x^\star$ is a stationary point of index
$a_0$;
\item 	if $2a_1 < a_0 + a_2$, then $x^\star$ is a stationary point of index
$a_2-1$. 
\end{itemiz}
\end{prop}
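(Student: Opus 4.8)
The plan is to compute the Hessian of $V_0$ restricted to the hyperplane $S$ at the critical point $x^\star$, and count its negative eigenvalues. Since $\gamma=0$, the unconstrained Hessian $\nabla^2 V_0(x^\star)$ is diagonal: its $i$-th diagonal entry is $U''(x_i^\star) = 3(x_i^\star)^2 - 1$. If $x_i^\star = \alpha_j$, this entry equals $3\alpha_j^2 - 1$. So the unconstrained Hessian has eigenvalue $3\alpha_j^2-1$ with multiplicity $a_j$, for $j=0,1,2$. The first step is to determine the signs of the three numbers $3\alpha_j^2-1$. Using $\alpha_0+\alpha_1+\alpha_2 = 0$ (sum of roots of $\xi^3 - \xi - \lambda$), $\alpha_0\alpha_1 + \alpha_1\alpha_2 + \alpha_0\alpha_2 = -1$, and the ordering/labelling convention tied to $a_0\leqs a_1\leqs a_2$, I expect exactly one of the $\alpha_j$ to be the ``middle'' root lying in $(-1/\sqrt3, 1/\sqrt3)$, giving a negative diagonal entry $3\alpha_j^2-1 < 0$, while the other two roots are outside that interval, giving positive entries. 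The bookkeeping here — matching which of the three multiplicities $a_0,a_1,a_2$ goes with the negative sign under the two hypotheses $2a_1 \gtrless a_0+a_2$ — is where I have to be careful, and it is presumably exactly the point of the two cases in the statement.

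**Passing to the constrained Hessian.**
The Hessian of the restriction of $V_0$ to $S$ is $P D P$, where $D = \nabla^2 V_0(x^\star) = \diag(d_1,\dots,d_N)$ with $d_i = 3(x_i^\star)^2-1$, and $P = \one - \frac1N \vone\vone^{\mathrm T}$ is the orthogonal projection onto $S$. Equivalently, I want the number of negative eigenvalues of the quadratic form $v \mapsto \pscal{v}{Dv}$ on the subspace $\{v : \pscal{v}{\vone}=0\}$. Because $D$ has only two distinct relevant sign patterns, this is a low-rank perturbation problem: the inertia of $D$ restricted to a hyperplane differs from the inertia of $D$ itself by at most one in each slot, and the direction of the change is controlled by the sign of $\pscal{\vone}{D^{-1}\vone} = \sum_i 1/d_i$ (this is the standard interlacing / Haynsworth-type computation for a rank-one constraint, valid since all $d_i\neq 0$ by the nondegeneracy assumption and since $N$ not a multiple of $3$ rules out $\lambda = \pm\lambdac$ where a root would be degenerate). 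So the second step is to evaluate the sign of $\sum_{j=0}^2 a_j/(3\alpha_j^2-1)$.

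**The sign computation.**
This is the computational heart. Writing $m_j = 3\alpha_j^2 - 1$ and using the elementary symmetric functions of the $\alpha_j$, I would reduce $\sum_j a_j/m_j$ to a rational expression in $\lambda$ and the $a_j$ whose numerator's sign I can read off. I expect the outcome to be: the constrained index equals the unconstrained negative count when $\sum a_j/m_j$ has one sign, and equals that count minus one when it has the other. Concretely, if the unconstrained form already has (say) $a_0$ negative directions coming from a single negative $m_j$ paired with the multiplicity $a_0$, then in the case $2a_1 > a_0+a_2$ the constraint does not remove a negative direction (it removes a positive one), giving index $a_0$; whereas in the case $2a_1 < a_0+a_2$ the labelling forces the negative $m_j$ to pair with multiplicity $a_2$, and the constraint removes exactly one negative direction, giving index $a_2 - 1$. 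I'd finish by checking consistency: $a_0 + (\text{index from the other slot}) + 1 = N-1 = \dim S$, and verifying the two cases are exhaustive given $N \not\equiv 0 \pmod 3$ (which forbids $2a_1 = a_0 + a_2$ together with the constraint).

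**Main obstacle.**
The genuine difficulty is not the linear algebra — the rank-one interlacing is standard — but the precise correspondence between the combinatorial hypothesis $2a_1 \gtrless a_0 + a_2$ and (i) which root $\alpha_j$ is the ``inner'' one with $m_j<0$, and (ii) the sign of $\sum_j a_j/m_j$. Both depend on how the reordering $a_0\leqs a_1\leqs a_2$ interacts with the natural ordering $\alpha_0 < \alpha_1 < \alpha_2$ of the roots and with the constraint $\sum a_j\alpha_j = 0$. I expect one has to split into the sub-cases $\lambda > 0$ and $\lambda < 0$ (equivalently, which of the outer roots is larger in absolute value) and track signs through each; getting these case distinctions exactly right, rather than off by the $\pm 1$, is where the real work lies.
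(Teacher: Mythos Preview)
Your approach is correct and takes a genuinely different route from the paper. The paper parametrises $S$ non-isometrically by eliminating $x_N$, obtaining a Hessian of the form ``diagonal plus rank one''; it then exhibits explicit eigenvectors of type $e_i - e_j$ carrying the eigenvalues $m_j = 3\alpha_j^2-1$ with multiplicities $a_j - 1$ (or $a_j-2$ for one index), leaving a residual invariant block of size $2$ (when $a_0=0$) or $3$ (when $a_0 \geqs 1$) whose trace, determinant and --- in the $3\times3$ case --- linear coefficient of the characteristic polynomial it analyses by hand. Your rank-one interlacing argument is more uniform: no case split on $a_0$, and the whole question reduces to the sign of the single scalar $\transpose{\vone} D^{-1}\vone = \sum_j a_j/m_j$.

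Your stated ``main obstacle'' is milder than you fear. Combining Vieta with the constraint $\sum_j a_j\alpha_j = 0$ yields $\alpha_j^2 = (a_{j+1}-a_{j-1})^2 R$ (indices mod~$3$) with $R^{-1} = \sum_j a_j^2 - \sum_{i<j} a_ia_j$; since only $\alpha_j^2$ enters, no split on $\sign\lambda$ is needed. Setting $A=3a_0-N$, $B=3a_1-N$, $C=3a_2-N$ (so $A<0<C$ strictly, as $N\not\equiv 0\pmod 3$), one finds $m_0 = -RBC$, $m_1 = -RAC$, $m_2 = -RAB$ and $a_0A+a_1B+a_2C = 2R^{-1}$, whence
\[
\sum_{j=0}^{2} \frac{a_j}{m_j}
\;=\; -\,\frac{a_0A+a_1B+a_2C}{R\,ABC}
\;=\; -\,\frac{2}{R^{2} ABC}\,,
\]
whose sign is exactly $\sign B = \sign(2a_1-a_0-a_2)$. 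Both cases of the proposition then drop out immediately. What your argument gains in economy the paper's gains in by-products: its explicit small blocks are reused later to extract the negative eigenvalue $\lambda_-(z^\star)$ and to control perturbations in~$\gamma$.
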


We give the proof in Appendix~\ref{ssec_proof_landscape_0}. It is based on the
construction of an orthogonal basis around each stationary point, in which the
Hessian matrix is block-diagonal with blocks of size $3$ at most, so that the
signs of its eigenvalues can be determined.

\begin{remark}
\label{rem_saddles}
The case $2a_1 = a_0 + a_2$ can only occur if $N$ is a multiple of $3$,
because $a_0 + a_2 = N-a_1$ would imply $a_1=N/3$. 
In case $N$ is a multiple of $3$, there exist one-parameter families of
degenerate stationary points~\cite{Dutercq_PhD}. For simplicity we exclude this
situation in all that follows.~$\lozenge$
\end{remark}

Proposition~\ref{prop_saddles} yields the following classification of local
minima and saddles of index~$1$:
\begin{enum}
\item 	Local minima $x^\star\in\cS_0$ necessarily have triple $(0,a,N-a)$
with $N/3 < a \leqs N/2$. 
\item 	Saddles of index $1$ either have triple $(1,a,N-a-1)$ with 
$N/3 < a \leqs (N-1)/2$, or they have triple $(N-2-a,a,2)$ with 
$N/2 - 1 \leqs a \leqs 2$ and $a<N/3$. The latter case can only occur if $N=4$,
and corresponds to the triple $(1,1,2)$.  
\end{enum}

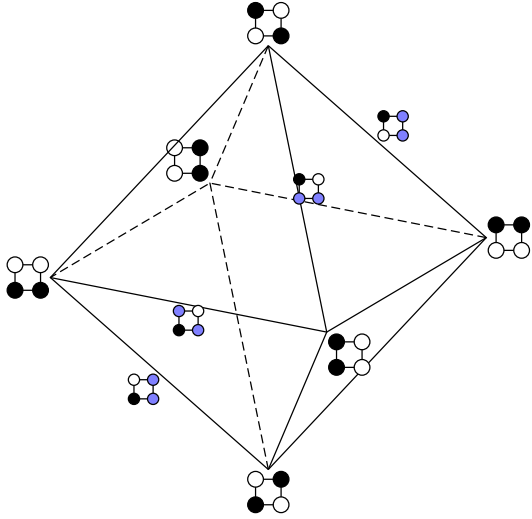
\begin{figure}[tb]
\begin{center}
\scalebox{0.6}{
\begin{tikzpicture}

\def\octasize{5}

\pos{\octasize*cos{\angxy}+0.5}{-\octasize*sin{\angxy}*sin{\angxz}};
\isquare{black}{black}{white}{white}{0.4}{3.5pt};

\pos{-\octasize*sin{\angxy}-0.5}{-\octasize*cos{\angxy}*sin{\angxz}+0.5};
\isquare{white}{black}{black}{white}{0.4}{3.5pt};

\pos{-\octasize*cos{\angxy}-0.5}{\octasize*sin{\angxy}*sin{\angxz}};
\isquare{white}{white}{black}{black}{0.4}{3.5pt};

\pos{0}{\octasize*cos{\angxz}+0.5};
\isquare{black}{white}{black}{white}{0.4}{3.5pt};

\pos{0}{-\octasize*cos{\angxz}-0.5};
\isquare{white}{black}{white}{black}{0.4}{3.5pt};

\draw[thick] (-\octasize*cos{\angxy},\octasize*sin{\angxy}*sin{\angxz}) --
(\octasize*sin{\angxy},\octasize*cos{\angxy}*sin{\angxz}) --
(\octasize*cos{\angxy},-\octasize*sin{\angxy}*sin{\angxz});
\draw[thick,dash pattern=on 2mm off 1mm]
(\octasize*cos{\angxy},-\octasize*sin{\angxy}*sin{\angxz}) --
(-\octasize*sin{\angxy},-\octasize*cos{\angxy}*sin{\angxz}) --
(-\octasize*cos{\angxy},\octasize*sin{\angxy}*sin{\angxz});
\draw[thick] (0,\octasize*cos{\angxz}) --
(\octasize*sin{\angxy},\octasize*cos{\angxy}*sin{\angxz}) --
(0,-\octasize*cos{\angxz});
\draw[thick] (0,\octasize*cos{\angxz}) --
(\octasize*cos{\angxy},-\octasize*sin{\angxy}*sin{\angxz}) --
(0,-\octasize*cos{\angxz});
\draw[thick,dash pattern=on 2mm off 1mm] (0,\octasize*cos{\angxz}) --
(-\octasize*sin{\angxy},-\octasize*cos{\angxy}*sin{\angxz}) --
(0,-\octasize*cos{\angxz});
\draw[thick] (0,\octasize*cos{\angxz}) --
(-\octasize*cos{\angxy},\octasize*sin{\angxy}*sin{\angxz}) --
(0,-\octasize*cos{\angxz});

\pos{\octasize*sin{\angxy}+0.5}{\octasize*cos{\angxy}*sin{\angxz}-0.5};
\isquare{black}{white}{white}{black}{0.4}{3.5pt};


\pos{0.5*\octasize*cos{\angxy}+0.35}
{0.5*(\octasize*cos{\angxz}-\octasize*sin{\angxy}*sin{\angxz})+0.35};
\isquare{black}{blue!50}{blue!50}{white}{0.3}{2.5pt};

\pos{0.5*\octasize*sin{\angxy}+0.25}
{0.5*\octasize*(cos{\angxz}+cos{\angxy}*sin{\angxz})};
\isquare{black}{white}{blue!50}{blue!50}{0.3}{2.5pt};


\pos{0.5*\octasize*(sin{\angxy}-cos{\angxy})}
{0.5*\octasize*(cos{\angxy}+sin{\angxy})*sin{\angxz}-0.35};
\isquare{blue!50}{white}{blue!50}{black}{0.3}{2.5pt};

\pos{-0.5*\octasize*cos{\angxy}-0.35}
{0.5*\octasize*(sin{\angxy}*sin{\angxz}-cos{\angxz})-0.35};
\isquare{white}{blue!50}{blue!50}{black}{0.3}{2.5pt};




\end{tikzpicture}
}
\end{center}
\vspace{-3mm}
\caption[]{Transition graph of the constrained system for $N=4$ and
$\gamma=0$. Black and white circles represent respectively coordinates equal to
$1$ and to $-1$. A few $1$-saddles associated with edges of the graph are
shown, with blue circles indicating coordinates equal to 0.}
\label{fig:constrained} 
\end{figure}

\begin{example}[The case $N=4$]
\label{ex:N=4}
If $N=4$, then $\cS_0$ contains $6$ points, consisting of all possible
permutations of $(1,1,-1,-1)$. In addition, there are $12$ saddles of index
$1$, consisting of all possible permutations of $(1,-1,0,0)$. Each of these
saddles connects the two local minima obtained by replacing one 0 by $1$ and
the other one by $-1$, and vice
versa~\cite[Section~2.4]{Hun_Masterthesis,Dutercq_PhD}.
The associated transition graph is an octahedron
(\figref{fig:constrained}).~$\blacklozenge$
\end{example}

We will henceforth limit the discussion to the case where $N=2M$ is even,
$N\geqs8$ and $N$ is not a multiple of $3$. Then the $1$-saddles necessarily
correspond to triples of the form $(1,a,N-a-1)$. In order to ease notations, we
write $k_{\max} = \intpart{N/6}$ and  
\begin{itemiz}
\item 	$B_k$ for the set of all local minima with triple $(0,M-k,M+k)$, where
$k\in\intint{0}{k_{\max}}$;
\item 	$C_k$ for the set of all $1$-saddles with triple $(1,M-k,M+k-1)$, where 
$k\in\intint{1}{k_{\max}}$. 
\end{itemiz}
Simple combinatorics shows that the cardinalities of these families are 
\begin{equation}
 \label{eq:gamma0_card}
 \abs{B_0} = \binom{2M}{M}\;, \qquad
 \abs{B_k} = 2\binom{2M}{M+k}\;, \qquad
 \abs{C_k} 
 = \frac{2(2M)!}{1!(M-k)!(M+k-1)!}
\end{equation} 
where $k\in\intint{1}{k_{\max}}$. The factors $2$ are due to the fact that
except for $B_0$, there are always two choices for the signs of coordinates.

One can obtain explicit expressions for the coordinates of all these stationary
points, see~\eqref{eq:alpha_j} in Appendix~\ref{ssec_proof_landscape_0}. Here it
will suffice to know that local minima in $B_0$ simply have $M$ coordinates
equal to $+1$ and $M$ coordinates equal to $-1$. These stationary points are
expected, and admit a simple interpretation in terms of a particle system: we
just associate each coordinate equal to $+1$ with the presence of a particle,
and each coordinates equal to $-1$ with the absence of a particle, that is, a
hole. 

The other families of local minima $B_1,\dots,B_{k_{\max}}$ have more
complicated coordinates, which do not allow for an interpretation as a particle
system. In fact their presence comes a bit as a surprise, so that we will call
them \emph{spurious} configurations. We will however show below that they have a
higher energy than the configurations in $B_0$, and therefore they will not play
an important r\^ole when the system is observed on a sufficiently long
timescale. 

\begin{example}[The case $N=8$]
If $N=8$, there are two families of local minima $B_0$ and $B_1$, and one
family of $1$-saddles $C_1$ (\figref{fig:transition_8}). 
\begin{itemiz}
\item 	The family of local minima $B_0$ corresponds to the triple $(0,4,4)$,
and contains all points that have $4$ coordinates equal to $+1$ and $4$
coordinates equal to $-1$. They can thus be interpreted as configurations with
$4$ particles and $4$ holes. 

\item 	The family of local minima $B_1$ corresponds to the triple $(0,3,5)$. It
contains all points with $3$ coordinates equal to $5/\sqrt{19}$ and $5$
coordinates equal to $-3/\sqrt{19}$, as well as all configurations with
opposite signs. 

\item 	The family of $1$-saddles $C_1$ corresponds to the triple $(1,3,4)$.  It
contains all points with $1$ coordinate equal to $-1/\sqrt{7}$, $3$ coordinates
equal to $3/\sqrt{7}$ and $4$ coordinates equal to $-2/\sqrt{7}$, as well as all
configurations with opposite signs.~$\blacklozenge$ 
\end{itemiz}
\end{example}

\begin{figure}[tb]
\begin{center}
\scalebox{0.8}{
\begin{tikzpicture}
\tikzset{mynode/.style={draw=white,shape=rectangle, minimum size=3cm}}

\def\octosite{4pt}

\node[mynode] (B0) at (0,0) {\Large $B_0$};
\pos{0}{0};
\iocto{white}{white}{black}{white}{black}{black}{black}{white}{1};

\node[mynode] (C1) at (5,0) {\Large $C_1$};
\pos{5}{0};
\iocto{\ogreen}{\ogreen}{\oviol}{\ogreen}
{\ooran}{\oviol}{\oviol}{\ogreen}{1};

\node[mynode] (B1) at (10,0) {\Large $B_1$};
\pos{10}{0};
\iocto{\oblue}{\oblue}{\ored}{\oblue}{\oblue}{\ored}{\ored}{\oblue}{1};

\draw[thick] (B0) -- (C1) -- (B1); 

\end{tikzpicture}
}
\end{center}
\vspace{-3mm}
\caption[]{Example of transition rules for $N=8$. The coordinates for
family $B_0$ are $\tikzcircle{black}{2pt}=1$ and 
$\tikzcircle{white}{2pt}=-1$. Those for $C_1$ are 
$\tikzcircle{red!50!yellow}{2pt}=-1/\sqrt{7}$, 
$\tikzcircle{violet!50}{2pt}=3/\sqrt{7}$ and 
$\tikzcircle{green!60}{2pt}=-2/\sqrt{7}$. Those for $B_1$ are 
$\tikzcircle{red!80}{2pt}=5/\sqrt{19}$ and 
$\tikzcircle{blue!60}{2pt}=-3/\sqrt{19}$.}
\label{fig:transition_8} 
\end{figure}

Now that we have determined all stationary points in $\cS_0$ and $\cS_1$, we
have to find the structure of the transition graph $\cG=(\cS_0,\cE)$. In other
words, we have to determine which local minima are connected by a given
$1$-saddle. This question is answered in the following result. 

\begin{theorem}[Transition graph for $\gamma=0$]
\label{thm:transition_graph}
Each $1$-saddle in $C_k$ connects exactly one local minimum in $B_{k-1}$ with
one local minimum in $B_k$. More precisely, if the coordinates of the
saddle have values $\alpha'_0, \alpha'_1, \alpha'_2$, and those of the local
minima are respectively $\alpha_1,\alpha_2$ and $\alpha''_1, \alpha''_2$, 
then the connection rules
are given by 
\begin{align}
\nonumber
&\alpha_1 \longleftrightarrow \alpha'_0 \longleftrightarrow \alpha''_2 
&& \text{$1$ coordinate\;,} \\
\label{eq:gamma0_rules} 
&\alpha_1 \longleftrightarrow \alpha'_1 \longleftrightarrow \alpha''_1 
&& \text{$M-k$ coordinates\;,} \\
&\alpha_2 \longleftrightarrow \alpha'_2 \longleftrightarrow \alpha''_2 
&& \text{$M+k-1$ coordinates\;.}
\nonumber
\end{align} 
\end{theorem}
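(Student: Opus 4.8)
The plan is to analyse each $1$-saddle $z\in C_k$ directly, following one branch of its one-dimensional unstable manifold at a time and identifying the local minimum it reaches. Write $z$ in the form given by Proposition~\ref{prop_saddles}, so $z$ has one coordinate equal to $\alpha'_0$ (the value coming from the index-contributing block, corresponding to the single $\alpha_0$-entry in the triple $(1,M-k,M+k-1)$), together with $M-k$ coordinates equal to $\alpha'_1$ and $M+k-1$ coordinates equal to $\alpha'_2$. The key structural fact, already used in the proof of Proposition~\ref{prop_saddles}, is that in the orthogonal basis constructed around $z$ the Hessian of $V_0$ is block-diagonal with blocks of size at most $3$, and the unique negative eigenvalue sits in the block involving the $\alpha'_0$-coordinate against the $\alpha'_1$- and $\alpha'_2$-entries. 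First I would write down the unstable eigenvector explicitly: it is supported on the $\alpha'_0$-site and (by the $\sum x_i = 0$ constraint) on the other sites with weights that decrease $\alpha'_0$ while increasing, or decreasing, the others in a way consistent with the constraint. There are exactly two directions along this eigenvector.

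Second, I would run the gradient flow $\dot x = -\nabla V_0(x) + \lambda(x)\vone$ from $z$ along each of the two unstable directions. Since $\gamma = 0$ the flow decouples coordinate-by-coordinate up to the common Lagrange multiplier, so each coordinate $x_i$ evolves under $\dot x_i = f(x_i) + \lambda(x)$ and must converge to one of the three roots of $\xi^3 - \xi = \lambda$ at the limiting value of $\lambda$; moreover, by symmetry of the construction, all coordinates that start equal stay equal along the flow. Thus along one branch the flow takes the $\alpha'_0$-coordinate together with the block of $\alpha'_1$-coordinates (or the block of $\alpha'_2$-coordinates) across the saddle; I would check which reassignment of root-labels is forced by monotonicity of $f$ between consecutive roots and by the conservation of $\sum x_i = 0$. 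The outcome is precisely that along one branch the $\alpha'_0$-coordinate and the $M-k$ coordinates $\alpha'_1$ merge into a single value while the $M+k-1$ coordinates $\alpha'_2$ relax to a matching value, producing a triple $(0, M-k+1, M+k-1) = (0, M-(k-1), M+(k-1))$, i.e.\ an element of $B_{k-1}$; along the other branch the $\alpha'_0$-coordinate joins the $\alpha'_2$-block instead, giving the triple $(0, M-k, M+k)$, i.e.\ an element of $B_k$. Tracking which original coordinate goes where then yields exactly the three connection rules~\eqref{eq:gamma0_rules}, with multiplicities $1$, $M-k$ and $M+k-1$.

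Third, I would confirm that the two limits are genuinely distinct local minima (they have different triples, so this is automatic) and that the $1$-saddle in $C_k$ has no other branches to account for — which follows because its Morse index is $1$, so the unstable manifold has exactly the two components already treated. A clean way to package all of this, avoiding an explicit integration of the ODE, is to use the gradient/Lyapunov structure: along each branch $V_0$ strictly decreases and $V_0$ is confining on $S$, so the $\omega$-limit set is a single stationary point of lower index reachable by decreasing $V_0$; comparing energies (using the explicit coordinate values~\eqref{eq:alpha_j}) rules out every candidate except the asserted one in $B_{k-1}$ resp.\ $B_k$.

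The main obstacle, I expect, is the bookkeeping in the middle step: one must pin down, among the several a priori conceivable root-relabellings compatible with $\sum x_i = 0$, the unique one selected by the sign of the unstable eigenvector and the monotonicity intervals of $\xi\mapsto\xi^3-\xi$. This is where the precise coordinates from~\eqref{eq:alpha_j} and a careful look at the size-$3$ Hessian block — rather than a soft argument — are needed; once the correct relabelling is identified, matching it against the coordinate pattern of $B_{k-1}$ and $B_k$ to read off the multiplicities $1$, $M-k$, $M+k-1$ is routine combinatorics.
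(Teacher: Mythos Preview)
Your approach is genuinely different from the paper's. The paper does not follow the gradient flow at all: it writes down the straight segment $\Gamma(t)=tz^\star+(1-t)x^\star$ between the saddle and the \emph{claimed} endpoint in $B_{k-1}$ (resp.\ $B_k$), sets $h(t)=V_0(\Gamma(t))$, observes that $h'(0)=h'(1)=0$ because both endpoints are constrained stationary points, and hence that the cubic $h'$ factors as $Kt(t-1)(t-\psi)$. The entire proof then reduces to checking $K>0$ and $\psi>1$ via an explicit polynomial computation in the coordinates~\eqref{eq:alpha_j}. This produces the monotone descending path from saddle to minimum --- which is what the downstream communication-height arguments actually use --- with no dynamical analysis whatsoever. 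Your route (reduce by permutation symmetry to three coordinate classes, hence to a two-dimensional constrained gradient flow, and read off the two $\omega$-limits of the unstable manifold) is sound in outline and more conceptual, but the reduced two-dimensional system has more stationary points than you seem to allow for.

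The concrete gap is your ``clean packaging'' by energy comparison. In the reduced problem the local minima come in $\pm$-pairs of identical potential value (the sign ambiguity in~\eqref{eq:alpha_j}), so comparing energies cannot single out the correct endpoint on either branch; and there are further saddles in the reduced plane (coming from the other assignments of the three roots of $\xi^3-\xi=\lambda$ to the three class sizes, at other values of $\lambda$) whose stable manifolds could, a priori, separate the actual unstable manifold of $z^\star$ from the basin of the intended minimum. You are right that the sign of the unstable eigenvector is what resolves this, but you would have to carry that eigenvector computation through and pair it with a genuine planar separatrix argument --- this is not the ``avoiding an explicit integration'' shortcut you describe. The paper's linear-path trick sidesteps the whole issue: once monotonicity of $h$ along the segment is verified, the specific endpoint is fixed by construction, with no basin analysis needed.
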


We give the proof in Appendix~\ref{ssec_proof_landscape_0}. It is based on the
construction of two continuous paths connecting a given point in $C_k$ to one
point in $B_{k-1}$ and one point in $B_k$, such that the potential decreases
along the path when moving away from the saddle. \figref{fig:transition_8}
illustrates the connection rule in the case $N=8$. See also~\cite[Fig.~5]{BD15}.

Using the relations~\eqref{eq:gamma0_card}, one easily checks that the number
of saddles in $C_k$ is indeed equal to the number of allowed connections
between elements in $B_{k-1}$ and $C_k$ as well as $B_k$ and $C_k$.


\subsection{The case of weak positive coupling}
\label{ssec_gamma_small}

It follows from basic perturbation arguments that the transition graph $\cG$
will persist for small positive coupling intensity $\gamma$. Indeed, if we
assume that $N$ is not a multiple of $3$, then all stationary points for
$\gamma=0$ are nondegenerate, so that the implicit function theorem shows that
they still exist for small positive coupling, and move at most by a distance of
order $\gamma$. In addition, perturbation results for the eigenvalues of
matrices such as the Bauer--Fike theorem (see for
instance~\cite{Golub_VanLoan_13}) show that the signature of nondegenerate
stationary points does not change for small $\gamma$. Finally, the proof of
Theorem~\ref{thm:transition_graph} essentially relies on the
relation~\eqref{eq:lp0_08}, whose coefficients depend continuously on $\gamma$.

The drawback of this argument is that while it shows that for any $N<\infty$,
there exists a critical coupling $\gamma^*(N)>0$ such that the transition graph
does not change for $0\leqs\gamma<\gamma^*(N)$, it does not yield a good control
on the critical coupling as $N\to\infty$. To obtain a lower bound on
$\gamma^*(N)$ which is uniform in $N$ (at least for $k$ fixed), we adapt
from~\cite{BFG06a} an argument based on symbolic dynamics to obtain the
following result. 

\begin{theorem}[Persistence of the transition graph for small positive $\gamma$]
 \label{thm:persistence} 
There exists a constant $c>0$, independent of $N$, such that the stationary
points of the families $B_k$ and $C_k$ persist for 
\begin{equation}
 \label{eq:wpc01}
 \gamma \leqs c \biggpar{\frac16 - \frac{k}{N}}^2\;,
\end{equation} 
without changing their index. In the particular case of stationary points of the
family $B_0$, we have the sharper result that they persist at least as long
as  $\gamma < \frac73 - \sqrt{5} \simeq 0.097$. 
\end{theorem}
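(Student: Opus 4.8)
The plan is to establish two separate things: persistence of the stationary points $B_k$, $C_k$ without change of index for $\gamma$ in the range~\eqref{eq:wpc01}, and the sharper bound for $B_0$. For the general statement, I would follow the symbolic-dynamics strategy of~\cite{BFG06a}. The idea is that a stationary point of the coupled system must satisfy, in components, $x_i^3 - x_i - \tfrac{\gamma}{2}(x_{i+1}-2x_i+x_{i-1}) = \lambda$ for some Lagrange multiplier $\lambda$, and one rewrites this as a fixed-point equation near a known $\gamma=0$ solution. Concretely, I would set up a Banach fixed-point argument in $\ell^\infty(\Lambda)$: write $x = x^\star + \delta$ with $x^\star$ the triple-$(a_0,a_1,a_2)$ configuration and show that the map $\delta \mapsto \Phi_\gamma(\delta)$ obtained by inverting the diagonal (uncoupled) part and treating the discrete Laplacian plus the change in Lagrange multiplier as a perturbation is a contraction on a small ball. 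The crucial quantitative input is a lower bound on $|3(\alpha_j)^2 - 1|$, i.e.\ on the curvature of $U$ at the relevant roots; since the roots of $\xi^3-\xi=\lambda$ for $|\lambda|<\lambda_{\text c}$ stay a definite distance from $\pm 1/\sqrt 3$ precisely when $k/N$ is bounded away from $1/6$ (this is where the factor $\frac16 - \frac kN$ enters, cf.\ the constraint $k \leqs k_{\max}=\intpart{N/6}$ and the triples $(0,M-k,M+k)$), one gets a gap of order $(\tfrac16 - \tfrac kN)$ in the worst case, hence an invertibility bound of order $(\tfrac16-\tfrac kN)^2$ after the fixed-point estimate squares it. The contraction then holds for $\gamma \leqs c(\tfrac16-\tfrac kN)^2$ with $c$ independent of $N$.

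Once persistence of the stationary points is established, the index cannot change because the eigenvalues of the Hessian move continuously and, by the same curvature gap, stay bounded away from $0$ uniformly; here I would invoke the Bauer--Fike theorem (as the excerpt already suggests) to control the spectrum of $\nabla^2 V_\gamma(x^\star_\gamma)$ in terms of $\nabla^2 V_0(x^\star)$ plus a $\gamma$-small perturbation. Finally, the \emph{structure} of the transition graph (Theorem~\ref{thm:transition_graph}) persists because, as noted in the preceding paragraph of the excerpt, its proof rests on the sign of the coefficients in~\eqref{eq:lp0_08}, which depend continuously on $\gamma$; as long as $\gamma$ is small enough that none of those coefficients changes sign — a condition again of the form $\gamma \leqs c'(\tfrac16 - \tfrac kN)^2$ after tracking the dependence on the roots $\alpha_j$ — the connecting paths from each saddle to its two adjacent minima still exist. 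Taking $c$ to be the minimum of the constants from these sub-steps gives the claim.

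For the sharper bound on $B_0$, the point is that the configurations in $B_0$ have all coordinates exactly $\pm 1$ at $\gamma = 0$, so one is perturbing around the \emph{deepest} wells of $U$, where the curvature $U''(\pm 1) = 2$ is as large as possible and, more importantly, independent of $N$. Here I would not use symbolic dynamics but a direct argument: for a configuration $x^\star$ with entries $\pm 1$, I would look for a nearby stationary point by the same fixed-point scheme, and the question of how far $\gamma$ can be pushed reduces to a scalar/transfer-matrix analysis. In fact, because $B_0$-configurations consist of blocks of consecutive $+1$'s and $-1$'s, the linearised operator $\nabla^2 V_\gamma$ is a tridiagonal (circulant-like) matrix $2\,\mathrm{Id} - \gamma\Delta$ perturbed by the curvature correction, and one can diagonalise or estimate its smallest eigenvalue explicitly; the threshold $\gamma < \tfrac73 - \sqrt 5$ should emerge from requiring this smallest eigenvalue to stay positive (equivalently, from a discriminant condition in the scalar recursion governing the deviations), uniformly over the block structure and over $N$. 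I would verify the constant $\tfrac73-\sqrt5$ by checking the worst-case block pattern, which is presumably a single isolated $+1$ (or $-1$) flanked by opposite signs, giving the tightest constraint.

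The main obstacle I expect is making the dependence on $N$ genuinely uniform in the symbolic-dynamics step: the discrete Laplacian has norm up to $4$ on $\ell^\infty$ regardless of $N$, which is fine, but one must ensure the Lagrange-multiplier correction $\lambda(\gamma) - \lambda(0)$ and the back-reaction of the nonlocal mean-field term $\tfrac1N\sum_j f(x_j)$ do not accumulate with $N$ — this requires care because it is a sum of $N$ terms, and one needs the per-site deviations to be $O(\gamma)$ with a constant not degrading in $N$, which is exactly what the curvature gap $(\tfrac16-\tfrac kN)$ buys but must be tracked honestly through the fixed-point estimate. The secondary difficulty is pinning down the exact constant $\tfrac73 - \sqrt 5$ for $B_0$ rather than merely \emph{some} $N$-independent constant; that will require identifying and solving the extremal configuration explicitly rather than using a crude operator-norm bound.
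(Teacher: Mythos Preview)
Your proposal differs substantially from the paper's proof, and contains one genuine gap.

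\textbf{What the paper actually does.} Despite invoking~\cite{BFG06a}, the paper does \emph{not} run a Banach fixed-point in $\ell^\infty$. It treats the Lagrange multiplier $\lambda$ as a free parameter and rewrites the stationarity equation as a two-dimensional area-preserving map $T(x,y)=(g(x)-y,x)$ with $g(x)=\tfrac{2}{\gamma}(x^3-(1-\gamma)x-\lambda)$. For $(\gamma,\lambda)$ in an explicit domain $D$, this map has a horseshoe with three branches, hence exactly $3^N$ periodic orbits of period $N$ (Proposition~A.2). A geometric estimate on the width of the horseshoe strips shows each coordinate lies within $\sqrt{\gamma}$ of its $\gamma=0$ value (Lemma~A.3). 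The constraint $\sum x_i=0$ is then enforced \emph{separately} by showing that $\Sigma_\gamma(\lambda)=\tfrac1N\sum x_i^\star(\gamma,\lambda)$ changes sign as $\lambda$ crosses the domain; since $\bigl|\Sigma_\gamma-\Sigma_0\bigr|\leqs\sqrt{\gamma}$ and $\Sigma_0$ at the boundary of $D$ equals $\tfrac{1}{\sqrt3}(\tfrac12-\tfrac{3k}{N})+O(\sqrt{\gamma})$, one needs $\sqrt{\gamma}\lesssim(\tfrac16-\tfrac kN)$, whence the squared bound. So the square does not come from ``the fixed-point estimate squares it'': it comes from the $\sqrt{\gamma}$ strip width in the horseshoe picture combined with the linear size of $\Sigma_0$.

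Your fixed-point scheme is a legitimate alternative and would in fact yield the \emph{linear} bound $\gamma\lesssim(\tfrac16-\tfrac kN)$ (the contraction condition is $\gamma\,\|D^{-1}\|<c$ with $\min_j|3\alpha_j^2-1|\sim(\tfrac16-\tfrac kN)$), which is stronger than what the theorem claims. But your sentence about squaring is unjustified as written, and you would need to explain carefully how the Lagrange multiplier is incorporated into the contraction (either as an additional unknown, with the augmented Jacobian shown invertible, or by an intermediate-value argument in $\lambda$ as the paper does).

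\textbf{The genuine gap: the constant $\tfrac73-\sqrt5$.} Your proposed mechanism is wrong. The Hessian at a $B_0$ point is (at $\gamma=0$) exactly $2\one_{N-1}$, and adding the discrete Laplacian only \emph{increases} its eigenvalues, so no positivity threshold appears there; nor does the block pattern of $\pm1$'s enter, since for $B_0$ one has $\lambda_0=0$ regardless of configuration. In the paper the constant arises as follows: with half the coordinates in $\cV_-$ and half in $\cV_+$, the strip-width estimate improves to $|\Sigma_\gamma-\Sigma_0|\leqs\tfrac12\sqrt{\gamma}$, and $\Sigma_0(\lambda)=-\tfrac12\alphac(\lambda)$; the condition $-\alphac(\hat\lambda(\gamma))>\sqrt{\gamma}$ for a sign change then reduces, after squaring, to the cubic inequality $27\gamma^3-135\gamma^2+54\gamma-4<0$, whose smallest positive root is $\tfrac73-\sqrt5$. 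A linearised-Hessian or worst-block argument will not discover this number.
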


The proof is given in Appendix~\ref{ssec_proof_landscape_gamma}. It 
also provides a criterion allowing to sharpen the bound~\eqref{eq:wpc01} for
families other than $B_0$, cf.~\eqref{eq:plg_42}, which however is not essential
in what follows.  

The important aspect of this result is that all families of stationary points
$B_k$ or $C_k$ with $\frac kN$ bounded away from $\frac16$ are ensured to exist
up to a positive critical coupling independent of $N$. Only stationary points
with $k = \frac N6 - \order{N}$  might disappear at a critical $\gamma$ which
vanishes in the large-$N$ limit. 


\section{Metastable hierarchy}
\label{sec_meta}


Now that the structure of the transition graph $\cG$ is understood, we have
access to information on timescales of the metastable process. A convenient way
of doing this relies on the concept of~\emph{metastable hierarchy}, which is an
ordering of the local minima from deepest to shallowest. We summarise this
construction in Section~\ref{ssec_def_hierarchy}, before applying it to our case
in Section~\ref{ssec_hierarchy_Bk}. A more refined hierarchy can be obtained for
small positive coupling $\gamma$ among the local minima of the family $B_0$,
which have a particle interpretation; we do this in
Section~\ref{ssec_hierarchy_B0}. 


\subsection{Metastable hierarchy and Eyring--Kramers law}
\label{ssec_def_hierarchy}

We consider in this section a general reversible diffusion process in $\R^N$ of
the form~\eqref{eq:model02}, with potential $V$ of class $\cC^2$. 

\begin{definition}[Communication height]
Let $x^\star$ be a local minimum of $V$ and let $A\subset\R^N$. The
\emph{communication height}\/ from $x^\star$ to $A$ is the nonnegative number 
\begin{equation}
 \label{eq:def_com_height}
 H(x^\star, A) = \inf_{\gamma:x^\star\to A} \sup_{t\in[0,1]} V(\gamma(t)) -
V(x^\star)\;,
\end{equation} 
where the infimum runs over all continuous paths $\gamma:[0,1]\to\R^N$ such that
$\gamma(0)=x^\star$ and $\gamma(1)\in A$. Any path $\gamma$
realising~\eqref{eq:def_com_height} is called a \emph{minimal path}\/ from
$x^\star$ to $A$. 
\end{definition}

The communication height measures how high one cannot avoid climbing in the
potential landscape to go from $x^\star$ to $A$. Assuming $A$ does not
intersect the basin of attraction of $x^\star$ and all stationary points of $V$
are nondegenerate, it is not difficult to show that the supremum
in~\eqref{eq:def_com_height} is reached at a $1$-saddle $z^\star$ of $V$ (see
for instance~\cite[Section~2]{BG2010}). In that case, one has
$H(x^\star,A) = V(z^\star) - V(x^\star)$. 

A notion of metastable order of local minima was introduced in~\cite{BGK}. In
our case, due to the fact that many minima have the same or almost the same
potential value, we introduce the following generalisation of this concept to
partitions of the set of local minima. Typically, we will apply this definition
to cases where the points in each element of the partition have approximately or
exactly the same potential height. 

\begin{definition}[Metastable hierarchy of a partition]
 \label{def:metastable_hierarchy}
A partition $\cS_0 = P_1 \cupdot P_2 \cupdot \dots \cupdot P_m$ of the set
$\cS_0$ of local minima of $V$ is said to form a \emph{metastable hierarchy} if
there exists a constant $\theta>0$ such that for all $k\in\intint{2}{m}$, one
has 
\begin{equation}
 \label{eq:def_meta_hierarchy}
 H \biggpar{x^\star, \bigcup_{i=1}^{k-1}P_i} 
 \leqs \min_{y^\star\in P_\ell} 
 H \biggpar{y^\star, \bigcup_{i=1}^kP_i \setminus P_\ell} -
\theta 
\end{equation} 
for all $x^\star\in P_k$ and all $\ell\in\intint{1}{k-1}$. In this case, we
write 
\begin{equation}
 \label{eq:not_meta_hierarchy}
 P_1 \prec P_2 \prec \dots \prec P_m\;.
\end{equation} 
\end{definition}

In words, it is easier, starting in any point in $P_k$, to reach a lower-lying
set $P_\ell$ in the hierarchy than it is, starting in such an $P_\ell$, to reach
any other set among $P_1, \dots P_k$. A graphical way of constructing the
hierarchy relies on the so-called \emph{disconnectivity tree}
\cite{Cameron_Vanden-Eijnden_2014}; it is illustrated
in~\figref{fig:disconnectivity} in a simple case where all $P_k=\set{x^\star_k}$
are singletons. The leaves of the tree have coordinates
$(x^\star_k,V(x^\star_k))$; each leaf is connected to the lowest saddle
reachable from it, and the procedure is repeated after discarding the shallower
local minimum whenever two branches join.  

\begin{figure}[tb]
\begin{center}
\scalebox{0.8}{
\begin{tikzpicture}[>=stealth',main node/.style={draw,circle,fill=white,minimum
size=3pt,inner sep=0pt},x=1cm,y=1.2cm
]



\draw[teal,thick,dashed] (-0.58,2.95) -- (4.4,2.95);
\draw[teal,thick,dashed] (-0.58,2.05) -- (1.45,2.05);
\draw[teal,thick,dashed] (4.4,1.05) -- (6.09,1.05);


\draw[black,thick] plot[smooth,tension=.6]
  coordinates{(-2.5,4.5) (-0.7,0) (0.6,2) (1.5,1) (2.7,2.9) (4.3,-0.95)
(5.5,1) (6.25,0.5) (7.5,4.5)};
  

\draw[red,ultra thick] (-0.58,-0.05) -- 
node[red,left,xshift=0.1cm,yshift=0.1cm] {$H_2$} (-0.58,2.95); 

\draw[red,ultra thick] (1.45,0.975) -- 
node[red,left,xshift=0.07cm] {$H_3$} (1.45,2.05); 

\draw[red,ultra thick] (4.4,-1) -- (4.4,4.5);

\draw[red,ultra thick] (6.09,0.35) -- 
node[red,right,xshift=-0.05cm] {$H_4$} (6.09,1.05); 


\node[main node,blue,fill=white,semithick] (2) at (-0.58,-0.05) {}; 
\node[blue] [below of=2,yshift=0.7cm] {$x^\star_2$};

\node[main node,blue,fill=white,semithick] (3) at (1.45,0.975) {}; 
\node[blue] [below of=3,yshift=0.7cm] {$x^\star_3$};

\node[main node,blue,fill=white,semithick] (1) at (4.4,-1) {}; 
\node[blue] [below of=1,yshift=0.7cm] {$x^\star_1$};

\node[main node,blue,fill=white,semithick] (4) at (6.09,0.35) {}; 
\node[blue] [below of=4,yshift=0.7cm] {$x^\star_4$};


\node[teal] at (0.7,2.3) {$z^\star_3$};
\node[teal] at (2.6,3.2) {$z^\star_2$};
\node[teal] at (5.6,1.3) {$z^\star_4$};

\end{tikzpicture}
}
\end{center}
\vspace{-5mm}
\caption[]{Example of a $4$-well potential, with its disconnectivity tree. The
metastable order is given by $x^\star_1 \prec x^\star_2 \prec x^\star_3 \prec
x^\star_4$. The communication heights $H_k =
H(x^\star_k,\set{x^\star_1,\dots,x^\star_{k-1}}) = V(z^\star_k)-V(x^\star_k)$
provide the Arrhenius exponents for mean transition times and small eigenvalues
of the generator $\cL$. Prefactors in the Eyring--Kramers law~\eqref{eq:BGK01}
are given in terms of second derivatives of the potential at the local minima
$x^\star_k$ and $1$-saddles $z^\star_k$.}
\label{fig:disconnectivity} 
\end{figure}
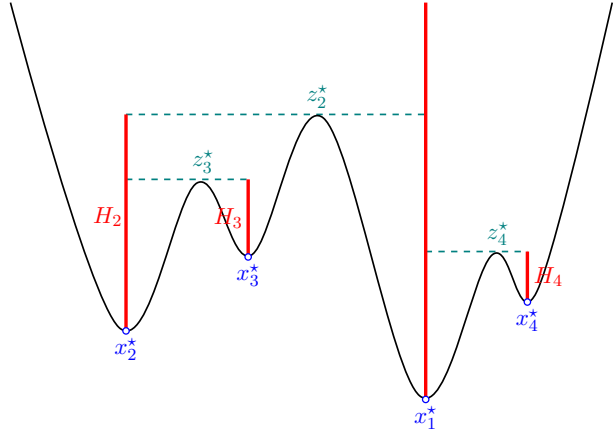

In the particular case where all $P_k$ are singletons, the following
result by Bovier, Gayrard and Klein connects the metastable hierarchy with
certain first-hitting times and with small eigenvalues of the infinitesimal
generator $\cL=\eps\Delta - \nabla V(x)\cdot\nabla$ of the diffusion. 

\begin{theorem}[Eyring--Kramers law for nondegenerate potentials \cite{BGK}]
\label{thm:BGK} 
Assume the local minima of $V$ admit a metastable order $x^\star_1 \prec \dots
\prec x^\star_m$. For each $k\in\intint{1}{m}$, denote by $\tau_k$ the
first-hitting time of the $\eps$-neighbourhood of $\set{x^\star_1, \dots,
x^\star_k}$, and let $\lambda_k$ by the $k$th smallest eigenvalue of $-\cL$.
Assume further that for each $k$, there is a unique $1$-saddle $z^\star_k$ such
that any minimal path from $x^\star_k$ to $\set{x^\star_1, \dots,
x^\star_{k-1}}$ reaches communication height only at $z^\star_k$. Then for each
$k\in\intint{2}{m}$, one has 
\begin{equation}
 \label{eq:BGK01}
 \expecin{x^\star_k}{\tau_{k-1}} 
 = \frac{2\pi}{\abs{\lambda_-(z^\star_k)}} 
 \sqrt{\frac{\abs{\det\nabla^2V(z^\star_k)}}{\det\nabla^2V(x^\star_k)}}
\e^{\brak{V(z^\star_k)-V(x^\star_k)}/\eps} 
\bigbrak{1 + \Order{\eps^{1/2}\abs{\log\eps}^{3/2}}}\;, 
\end{equation} 
where $\nabla^2 V(x)$ denotes the Hessian matrix of $V$ at $x$, and
$\lambda_-(z^\star_k)$ is the unique negative eigenvalue of
$\nabla^2V(z^\star_k)$. Furthermore, $\lambda_1=0$ and there exists a constant
$\theta_1>0$ such that 
\begin{equation}
 \label{eq:BGK02}
 \lambda_k = \frac{1}{\expecin{x^\star_k}{\tau_{k-1}}}
\bigbrak{1+\Order{\e^{-\theta_1/\eps}}}
\end{equation} 
holds for all $k\in\intint{2}{m}$. 
\end{theorem}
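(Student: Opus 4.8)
\noindent
The plan is to follow the potential-theoretic route of Bovier, Eckhoff, Gayrard and Klein~\cite{BEGK,BGK}, which is exactly the setting in which the statement as formulated here is proved. The generator $\cL=\eps\Delta-\nabla V\cdot\nabla$ is self-adjoint on $L^2(\mu)$ with $\mu$ as in~\eqref{eq:model09}, with Dirichlet form $\cE(h)=\eps\int_{\R^N}\norm{\nabla h}^2\e^{-V/\eps}\6y$. Writing $M_{k-1}=\set{x^\star_1,\dots,x^\star_{k-1}}$ and letting $B$ be an $\eps$-ball centred at $x^\star_k$, the first step is the capacity representation of the mean hitting time,
\[
 \expecin{x^\star_k}{\tau_{k-1}}
 =\frac{\int_{\R^N}h_{B,M_{k-1}}(y)\,\e^{-V(y)/\eps}\6y}{\operatorname{cap}_B(M_{k-1})}\bigbrak{1+\Order{\e^{-c/\eps}}}\;,
\]
where $h_{B,M_{k-1}}$ is the equilibrium potential ($\cL$-harmonic off $\overline B\cup M_{k-1}$, equal to $1$ on $\partial B$ and $0$ on $M_{k-1}$) and $\operatorname{cap}_B(M_{k-1})=\cE(h_{B,M_{k-1}})$. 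Everything then reduces to sharp estimates of the numerator and the denominator, followed by the passage to eigenvalues.

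\noindent
For the numerator I would exploit the metastable ordering $x^\star_1\prec\dots\prec x^\star_m$: it ensures that $h_{B,M_{k-1}}$ is exponentially close to $1$ throughout the basin of attraction of $x^\star_k$ and exponentially close to $0$ in the basins of $x^\star_1,\dots,x^\star_{k-1}$, so that there is no leakage through competing channels and the integral localises to the well of $x^\star_k$; Laplace's method around $x^\star_k$ then produces the Gaussian prefactor $(2\pi\eps)^{N/2}(\det\nabla^2V(x^\star_k))^{-1/2}\e^{-V(x^\star_k)/\eps}$ up to a relative error $\Order{\eps^{1/2}\abs{\log\eps}^{3/2}}$. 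The denominator is the heart of the argument, and I would bound it from both sides using the dual variational principles for capacity. For the upper bound, the Dirichlet principle $\operatorname{cap}_B(M_{k-1})\leqs\cE(g)$ applied to a test function $g$ that equals the one-dimensional error-function profile along the unstable direction of $z^\star_k$ in a box of side $\Order{\sqrt{\eps\abs{\log\eps}}}$ around the saddle and is locally constant elsewhere; carrying out the Gaussian integrals over the $N-1$ stable directions and the unstable direction yields the claimed prefactor. For the matching lower bound, Thomson's principle bounds $\operatorname{cap}_B(M_{k-1})^{-1}$ by the energy of any unit flow from $B$ to $M_{k-1}$, and I would build one concentrated in a thin tube around a minimal path through $z^\star_k$; the hypothesis that $z^\star_k$ is the unique saddle of maximal communication height guarantees a single dominant channel. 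Together these give
\[
 \operatorname{cap}_B(M_{k-1})
 =\frac{\abs{\lambda_-(z^\star_k)}}{2\pi}\cdot\frac{(2\pi\eps)^{N/2}}{\sqrt{\abs{\det\nabla^2V(z^\star_k)}}}\,\e^{-V(z^\star_k)/\eps}\bigbrak{1+\Order{\eps^{1/2}\abs{\log\eps}^{3/2}}}\;,
\]
and dividing the numerator by this produces~\eqref{eq:BGK01}.

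\noindent
For~\eqref{eq:BGK02} the constant function gives $\lambda_1=0$. One then shows that $-\cL$ has exactly $m$ eigenvalues that are exponentially small in $1/\eps$, separated from the rest of the spectrum by a gap that does not vanish exponentially — again using the metastable ordering together with crude a priori lower bounds on the Dirichlet eigenvalues of $-\cL$ on domains avoiding the minima. Each $\lambda_k$, $k\geqs2$, is then identified, up to a factor $1+\Order{\e^{-\theta_1/\eps}}$, with the principal Dirichlet eigenvalue of $-\cL$ on $\R^N\setminus M_{k-1}$ localised to the well of $x^\star_k$ (test functions built from the equilibrium potentials), and the latter equals $1/\expecin{x^\star_k}{\tau_{k-1}}$ up to the same exponentially small error, because in a well that is deep on the metastable scale the mean exit time and the inverse principal eigenvalue coincide. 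Since these corrections come only from the separation of metastable timescales, the error here is exponentially small rather than polynomial.

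\noindent
The hard part will be the two-sided estimate of $\operatorname{cap}_B(M_{k-1})$: the Dirichlet (upper) bound is comparatively soft, but the Thomson (lower) bound requires constructing a near-optimal divergence-free flow and controlling precisely the deviation of the equilibrium potential near $z^\star_k$ from the idealised one-dimensional profile — this deviation is exactly what generates the $\Order{\eps^{1/2}\abs{\log\eps}^{3/2}}$ error. For the eigenvalue statement, the additional delicate point is the a priori spectral separation of the $m$ small eigenvalues from the bulk, which needs quantitative control of the low-lying spectrum of $-\cL$ away from the potential wells.
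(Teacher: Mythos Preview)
The paper does not prove this theorem; it is quoted as a result from the literature~\cite{BGK} (see the sentence preceding the statement), so there is no ``paper's own proof'' to compare against. Your outline is a faithful high-level sketch of the potential-theoretic approach of~\cite{BEGK,BGK}, and as such is correct in spirit.

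Two small technical remarks that would matter if you were to flesh this out. First, the exact capacity identity you invoke is
\[
 \expecin{\nu_{B,M_{k-1}}}{\tau_{k-1}}
 =\frac{\int_{\R^N}h_{B,M_{k-1}}(y)\,\e^{-V(y)/\eps}\6y}{\operatorname{cap}_B(M_{k-1})}\;,
\]
where the expectation is with respect to the equilibrium measure on $\partial B$, not $\expecin{x^\star_k}{\cdot}$; replacing one by the other costs an additional error that has to be controlled (via Harnack-type arguments showing the hitting time is nearly constant on the small ball $B$). Second, the original lower bound on the capacity in~\cite{BEGK,BGK} was not obtained via Thomson's principle and a near-optimal flow, but by restricting the Dirichlet problem to a narrow tube around the minimal path and reducing to a one-dimensional computation; the flow-based argument you describe is a later and now standard alternative. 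Neither point is a genuine gap in your plan.
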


This result tells us in particular that if the system starts at a stationary
point at the end of the metastable hierarchy, it will spend longer and longer
amounts of time going down the hierarchy (possibly visiting other local minima
in between), before reaching the ground state $x^\star_1$. In particular, the
spectral gap $\lambda_2-\lambda_1=\lambda_2$ of the system, which gives the
exponential rate of convergence to equilibrium, depends to leading order only on
the second local minimum in the hierarchy $x^\star_2$, and on the saddle
$z^\star_2$ connecting it to the ground state.


\subsection{Hierarchy on the families $B_k$}
\label{ssec_hierarchy_Bk}

Unfortunately, Theorem~\ref{thm:BGK} does not apply to our situation, because
one cannot find a hierarchy for singletons. This is due to the fact that the
potential $V_\gamma$ has many symmetries, and therefore many stationary points
have the same potential height, preventing us from
fulfilling~\eqref{eq:def_meta_hierarchy} with a positive $\theta$. In
particular, in the uncoupled case $\gamma=0$, the system is invariant under
the group $G=\mathfrak{S}_N\times\Z_2$, where $\mathfrak{S}_N$ is the symmetric
group describing permutations of the $N$ coordinates, and the factor
$\Z_2=\Z/2\Z$ accounts for the $x\mapsto-x$ symmetry. The families $B_k$ and
$C_k$ each form a group orbit under $G$, that is, they are equivalence classes
of the form $\setsuch{gx}{g\in G}$.

However, we will be able to draw on results of~\cite{BD15}, which generalise
Theorem~\ref{thm:BGK} to Markovian jump processes invariant under a group of
symmetries, and the extension of these results to diffusion
processes~\cite{Dutercq_PhD,Dutercq_in_preparation}. In particular,
\cite[Thm~3.2]{BD15} shows that if the system starts with an initial
distribution which is uniform on some $B_k$, then a very similar result to
Theorem~\ref{thm:BGK} holds true. The only difference is that the prefactor in
the Eyring--Kramers law~\eqref{eq:BGK01} has to be multiplied by a factor which
can be explicitly computed in terms of stabilisers of the group orbits. 

\begin{figure}[tb]
\begin{center}
\scalebox{0.8}{
\begin{tikzpicture}[>=stealth',main node/.style={circle,minimum
size=0.25cm,fill=blue!20,draw},x=1.2cm,y=1.2cm,
declare function={
pota(\x) = (810/37829)*\x + (436347/37829)*(\x^2) - (172354/37829)*(\x^3);
potb(\x) = -(810/37829)*\x + (436347/37829)*\x^2 - (14959/1991)*\x^3;
potc(\x) = -567829/37829 + (1702677/37829)*\x - (1267140/37829)*\x^2 +
(283608/37829)*\x^3;
potd(\x) = 3963243/37829 - (5093931/37829)*\x + (2131164/37829)*\x^2 -
(282776/37829)*\x^3;
pote(\x) = - 11233356/37829 + (10102668/37829)*\x - (2934369/37829)*\x^2 +
(280061/37829)*\x^3;
potf(\x) = 23972660/37829 - (16301844/37829)*\x + (3666759/37829)*\x^2 -
(270033/37829)*\x^3;
potg(\x) = - 3532815/3439 + (21398331/37829)*\x - (352116/3439)*\x^2 +
(12244/1991)*\x^3;
poth(\x) = - 0.05+39663891/37829 - (17864097/37829)*\x + (2670462/37829)*\x^2 -
(130905/37829)*\x^ 3;
}]

\newcommand*{\nsamples}{15}
\newcommand*{\myvscale}{0.6}

\pgfmathsetmacro{\zca}{\myvscale*4}
\pgfmathsetmacro{\zcb}{\myvscale*6.1}
\pgfmathsetmacro{\zcc}{\myvscale*10.25}
\pgfmathsetmacro{\zba}{\myvscale*1}
\pgfmathsetmacro{\zbb}{\myvscale*3.95}
\pgfmathsetmacro{\zbc}{\myvscale*8.9}


\draw[teal,thick,dashed] (0,\zca) -- node[teal,above] {\Large{$C_1$}}
(1.95,\zca);
\draw[teal,thick,dashed] (0,\zcb) -- node[teal,above,xshift=1.3cm]
{\Large{$C_2$}} (3.9,\zcb);
\draw[teal,thick,dashed] (0,\zcc) -- node[teal,above,xshift=2.6cm]
{\Large{$C_3$}} (5.9,\zcc);


\draw[red,ultra thick] (0,0) node[blue,below] {\Large{$B_0$}} -- (0,7.7);
\draw[red,ultra thick] (1.95,\zba) node[blue,below] {\Large{$B_1$}} --
(1.95,\zca);
\draw[red,ultra thick] (3.9,\zbb) node[blue,below] {\Large{$B_2$}} --
(3.9,\zcb);
\draw[red,ultra thick] (5.92,\zbc) node[blue,below] {\Large{$B_3$}} --
(5.92,\zcc);


\draw[black,thick,-,smooth,domain=-0.915:0,samples=\nsamples,/pgf/fpu,
/pgf/fpu/output format=fixed] plot (\x,{\myvscale*pota(\x)});
\draw[black,thick,-,smooth,domain=0:1,samples=\nsamples,/pgf/fpu,/pgf/fpu/output
format=fixed] plot (\x, {\myvscale*potb(\x)});
\draw[black,thick,-,smooth,domain=1:2,samples=\nsamples,/pgf/fpu,/pgf/fpu/output
format=fixed] plot (\x, {\myvscale*potc(\x)});
\draw[black,thick,-,smooth,domain=2:3,samples=\nsamples,/pgf/fpu,/pgf/fpu/output
format=fixed] plot (\x, {\myvscale*potd(\x)});
\draw[black,thick,-,smooth,domain=3:4,samples=\nsamples,/pgf/fpu,/pgf/fpu/output
format=fixed] plot (\x, {\myvscale*pote(\x)});
\draw[black,thick,-,smooth,domain=4:5,samples=\nsamples,/pgf/fpu,/pgf/fpu/output
format=fixed] plot (\x, {\myvscale*potf(\x)});
\draw[black,thick,-,smooth,domain=5:6,samples=\nsamples,/pgf/fpu,/pgf/fpu/output
format=fixed] plot (\x, {\myvscale*potg(\x)});
\draw[black,thick,-,smooth,domain=6:6.75,samples=\nsamples,/pgf/fpu,
/pgf/fpu/output format=fixed] plot (\x, {\myvscale*poth(\x)});

\end{tikzpicture}
}
\end{center}
\vspace{-5mm}
\caption[]{Value of the potential $V_0$ along a path $B_0 \to C_1 \to B_1 \to
\dots$ in the case $N=20$ (not to scale). The associated disconnectivity tree
shows that the $B_k$ are indeed in metastable order. Thus the long-time dynamics
will concentrate on the set $B_0$ of particle--hole configurations.}
\label{fig:BC} 
\end{figure}
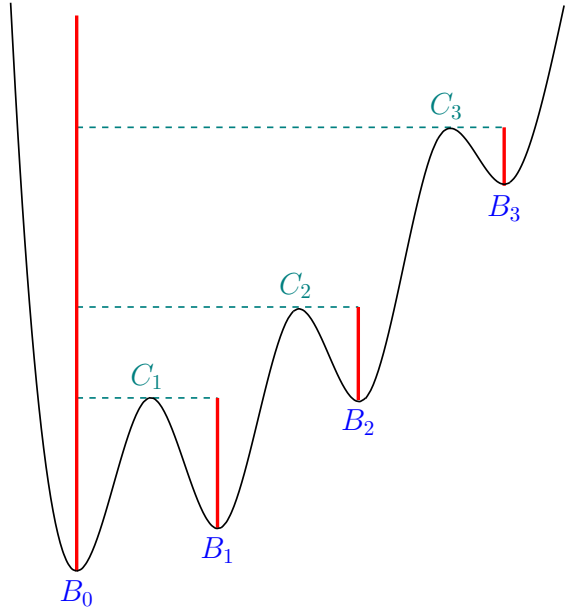

The following result provides a metastable order on the $B_k$, which is exactly
what is required to apply the theory
from~\cite{BD15,Dutercq_PhD,Dutercq_in_preparation} in the uncoupled case. 

\begin{theorem}[Metastable hierarchy on the $B_k$]
\label{thm:hierarchy_Bk} 
If $\gamma=0$, then the families $B_k$ satisfy a metastable order given by 
\begin{equation}
\label{eq:Bk01} 
 B_0 \prec B_1 \prec \dots \prec B_{k_{\max}}\;.
\end{equation} 
Furthermore, any minimal path from $B_k$ to $B_{k-1}$ reaches communication
height only on saddles in $C_k$. The hierarchy~\eqref{eq:Bk01} still applies for
sufficiently small positive $\gamma$, the only difference being that all points
inside a given $B_k$ do not necessarily have the same potential value.  
\end{theorem}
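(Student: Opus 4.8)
The plan is to reduce the whole statement to an explicit computation of $V_0$ on the families $B_k$ and $C_k$, and then to feed the result into Definition~\ref{def:metastable_hierarchy}. The structural input that makes this work is Theorem~\ref{thm:transition_graph}: the transition graph $\cG$ is a ``multipartite chain'', its only edges running between consecutive families $B_{k-1}$ and $B_k$ and being realised by $1$-saddles of $C_k$, with no edge inside a family. Consequently, for $x^\star\in B_k$ the communication height $H(x^\star,\bigcup_{i<k}B_i)$ equals $V_0(C_k)-V_0(x^\star)$ and is attained on the $C_k$-saddle incident to $x^\star$; likewise the cheapest way to leave a family $B_m$ for any other family costs $V_0(C_m)-V_0(B_m)$ for $m\geqs1$ (descend to $B_{m-1}$) and $V_0(C_1)-V_0(B_0)$ for $m=0$. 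So everything comes down to comparing the numbers $b_k:=V_0|_{B_k}$ and $c_k:=V_0|_{C_k}$, which for $\gamma=0$ are genuinely constant on the group orbits $B_k$, $C_k$.

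First I would record the identity $V_0(x^\star)=-\tfrac14\norm{x^\star}^2$, valid at every stationary point $x^\star\in S$: since $x_i^3-x_i=\lambda$ there, one has $U(x_i)=\tfrac14 x_i(x_i^3-x_i)-\tfrac14 x_i^2=\tfrac\lambda4 x_i-\tfrac14 x_i^2$, and summing over $i$ kills the first term because $\sum_i x_i=0$. Then, using the explicit coordinates of the stationary points — the roots $\alpha_0,\alpha_1,\alpha_2$ of $\xi^3-\xi-\lambda$, subject to $\alpha_0+\alpha_1+\alpha_2=0$, $\alpha_0\alpha_1+\alpha_1\alpha_2+\alpha_0\alpha_2=-1$ and to the linear constraint determined by the triple — a short elimination gives, for the triple $(0,M-k,M+k)$,
\[
 b_k \;=\; -\,\frac{M\,(M^2-k^2)}{2\,(M^2+3k^2)}\;,
\]
together with an analogous rational expression for $c_k$ coming from the triple $(1,M-k,M+k-1)$.

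The core of the argument is a set of elementary monotonicity checks on these formulas: (i) $b_0<b_1<\dots<b_{k_{\max}}$, which is immediate since $b_k$ has derivative $4kM^3/(M^2+3k^2)^2>0$ in $k$; (ii) $c_k>\max\bigset{b_{k-1},b_k}$ for $1\leqs k\leqs k_{\max}$, so that the $C_k$-saddles sit strictly above the two minima they join; (iii) $c_1<c_2<\dots<c_{k_{\max}}$, which ensures descending is always cheaper than ascending; and, most importantly, (iv) the ``descent costs'' $d_k:=c_k-b_k>0$ are \emph{strictly decreasing} in $k$. Granting (i)--(iv), the inequality demanded by Definition~\ref{def:metastable_hierarchy} for the partition $P_j:=B_{j-1}$ reduces to $d_{j-1}\leqs\min\bigset{c_1-b_0,\,d_1,\dots,d_{j-2}}-\theta$, which holds with the fixed positive constant $\theta=\min\bigset{b_1-b_0,\ \min_{2\leqs k\leqs k_{\max}}(d_{k-1}-d_k)}$; this proves $B_0\prec B_1\prec\dots\prec B_{k_{\max}}$, while the claim that a minimal path from $B_k$ to $B_{k-1}$ reaches communication height only on $C_k$ follows from Theorem~\ref{thm:transition_graph} (it is the unique saddle joining the two families) together with (ii), via the standard description of communication heights through $\cG$ (cf.~\cite{BG2010}). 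I expect step (iv) to be the main obstacle: the $d_k$ shrink sharply as $k\to k_{\max}$, so proving $d_{k-1}>d_k$ requires a genuine manipulation of the closed form for $c_k$ rather than a one-line estimate — although a finite case check plus a general bound would suffice, since Definition~\ref{def:metastable_hierarchy} only requires \emph{some} $\theta>0$ at each fixed $N$.

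For small positive $\gamma$ I would argue by continuity. Theorem~\ref{thm:persistence} gives, for each fixed $N$, a threshold below which all families $B_k$, $C_k$ persist with unchanged index and the transition graph $\cG$ is unchanged, while the positions of these stationary points, their Hessians and the values of $V_\gamma$ on them depend continuously on $\gamma$. Hence $V_\gamma$ on $B_k$ (resp.\ $C_k$) lies within $\Order{\gamma}$ of $b_k$ (resp.\ $c_k$) — now only with $\Order{\gamma}$ spread over the orbit instead of being constant — so all communication heights above are perturbed by $\Order{\gamma}$. Since $\theta$ is a fixed positive gap, the defining inequality of Definition~\ref{def:metastable_hierarchy} survives, with $\theta$ replaced by $\theta/2$, once $\gamma$ is small enough, and the same perturbation keeps the ``communication height only on $C_k$'' statement valid.
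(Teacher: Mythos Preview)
Your proposal is correct and follows essentially the same route as the paper's proof. Both arguments compute the constant values $b_k=V_0|_{B_k}$ and $c_k=V_0|_{C_k}$, use the chain structure of $\cG$ from Theorem~\ref{thm:transition_graph} to identify the relevant communication heights with $c_k-b_k$ and $c_1-b_0$, and then reduce Definition~\ref{def:metastable_hierarchy} to the monotone decrease of the descent costs $d_k=c_k-b_k$; the small-$\gamma$ extension is by continuity in both cases. Your identity $V_0(x^\star)=-\tfrac14\|x^\star\|^2$ is a clean shortcut not made explicit in the paper, and your check (iii) ($c_k$ increasing) plays the role the paper assigns to the monotonicity of $h_1(a)=c_{k+1}-b_k$; either suffices to ensure that descending is always the cheapest exit. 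The paper is equally terse about the calculus behind (iv), simply asserting that $h_2(a)=d_{M-a}$ is increasing after differentiation, so your caveat there is not a gap relative to the original.
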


We give the proof in Appendix~\ref{ssec_proof_Bk}. The situation is illustrated
in~\figref{fig:BC}. As $k$ increases from $0$ to $k_{\max}$, the potential
height of the $B_k$ increases, while the barrier height between $C_k$ and
$B_k$ decreases. See Appendix~\ref{ssec_proof_Bk} for explicit expressions for
these potential values. Applying~\cite[Thm~3.2]{BD15}, we obtain in particular
the following result.

\begin{cor}
\label{cor:Bk}
For $k\in\intint{1}{k_{\max}}$, let $\tau_{k-1}$ be the first-hitting time
of the $\eps$-neighbourhood of $B_0\cup\dots\cup B_{k-1}$. If the initial
distribution $\mu$ of the system is concentrated on $B_k\cup\dots\cup
B_{k_{\max}}$ and invariant under $G$, then for $\gamma=0$ one has 
\begin{equation}
 \label{eq:Bk02}
 \expecin{\mu}{\tau_{k-1}} 
 = \frac{2\pi}{\abs{\lambda_-(z^\star_k)}(M+k)} 
 \sqrt{\frac{\abs{\det\nabla^2V_0(z^\star_k)}}{\det\nabla^2V_0(x^\star_k)}}
\e^{\brak{V_0(z^\star_k)-V_0(x^\star_k)}/\eps} 
\bigbrak{1 + \Order{\eps^{1/2}\abs{\log\eps}^{3/2}}}\;, 
\end{equation} 
where $x^\star_k$ is any local minimum in $B_k$, $z^\star_k$ is any saddle in
$C_k$, and $M=N/2$.  
\end{cor}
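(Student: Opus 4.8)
The plan is to deduce the statement from the group-symmetric refinement of the Eyring--Kramers law, i.e.\ from~\cite[Thm~3.2]{BD15} together with its extension to reversible diffusions~\cite{Dutercq_PhD,Dutercq_in_preparation}, and to identify the resulting combinatorial prefactor with $1/(M+k)$. Recall that for $\gamma=0$ the potential $V_0$ is invariant under $G=\mathfrak S_N\times\Z_2$ and that each family $B_k$, resp.\ $C_k$, is a single $G$-orbit; hence all local minima in $B_k$ share the value $V_0(x^\star_k)$ and the same Hessian spectrum --- so $\det\nabla^2V_0(x^\star_k)$ is independent of the representative --- and all saddles in $C_k$ share the value $V_0(z^\star_k)$, the same $\det\nabla^2V_0(z^\star_k)$ and the same negative eigenvalue $\lambda_-(z^\star_k)$. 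By Theorem~\ref{thm:hierarchy_Bk}, $B_0\prec\dots\prec B_{k_{\max}}$ is a metastable hierarchy and every minimal path from $B_k$ to $B_{k-1}$ attains its communication height only on $C_k$; by Theorem~\ref{thm:transition_graph} each saddle of $C_k$ joins exactly one minimum of $B_{k-1}$ to exactly one minimum of $B_k$. This is precisely the structure under which~\cite{BD15,Dutercq_PhD,Dutercq_in_preparation} applies, with unique (up to the $G$-action) relevant saddle orbit $C_k$.

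First I would reduce to starting from the uniform probability measure $\pi_{B_k}$ on $B_k$. Since $B_0\cup\dots\cup B_{k-1}$ is $G$-invariant, $\tau_{k-1}$ is a $G$-invariant functional of the path, so $\expecin{x^\star_k}{\tau_{k-1}}=\expecin{g x^\star_k}{\tau_{k-1}}$ for all $g\in G$, whence $\expecin{x^\star_k}{\tau_{k-1}}=\expecin{\pi_{B_k}}{\tau_{k-1}}$. A general $G$-invariant $\mu$ concentrated on $B_k\cup\dots\cup B_{k_{\max}}$ reduces, to leading order, to a mixture of the $\pi_{B_j}$, $j\geqs k$; and by the strong Markov property, started from $B_j$ with $j>k$ the process descends to the $\eps$-neighbourhood of $B_k$ and then makes a fresh attempt at $B_0\cup\dots\cup B_{k-1}$, the descent being governed by the barriers $V_0(z^\star_i)-V_0(x^\star_i)$, $i>k$, each of which is --- by the metastable ordering, which forces these barriers to be strictly decreasing in $i$ --- smaller by at least a fixed $\theta>0$ than the level-$k$ barrier $V_0(z^\star_k)-V_0(x^\star_k)$ that governs $\expecin{\pi_{B_k}}{\tau_{k-1}}$. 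Hence $\expecin{\mu}{\tau_{k-1}}=\expecin{\pi_{B_k}}{\tau_{k-1}}\bigbrak{1+\Order{\e^{-\theta/\eps}}}$, an error absorbed into the one stated.

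It then remains to evaluate $\expecin{\pi_{B_k}}{\tau_{k-1}}$. The symmetric Eyring--Kramers formula gives it as the right-hand side of~\eqref{eq:BGK01} --- with $V$ replaced by $V_0$ and $x^\star_k$, $z^\star_k$ arbitrary representatives --- times a combinatorial prefactor, which I would compute from the capacity--mass representation of the mean hitting time: to leading order, $\expecin{\pi_{B_k}}{\tau_{k-1}}$ equals the equilibrium mass of the valley of $B_k$ divided by $\mathrm{cap}(B_k,B_0\cup\dots\cup B_{k-1})$. That mass is $\abs{B_k}$ times the mass of a single well, while the $\abs{C_k}$ saddles of $C_k$ contribute additively and, by $G$-symmetry, equally to the capacity, so the prefactor relative to~\eqref{eq:BGK01} is $\abs{B_k}/\abs{C_k}$. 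Equivalently, from a fixed $x^\star_k\in B_k$ the diffusion may escape through any adjacent saddle of $C_k$, and by the connection rule~\eqref{eq:gamma0_rules} there are exactly $M+k$ of them (choose which of the $M+k$ coordinates of $x^\star_k$ taking the value $\alpha''_2$ becomes the lone $\alpha'_0$ coordinate of the saddle); so the prefactor is $1/(M+k)$. The two counts agree, since by~\eqref{eq:gamma0_card}
\begin{equation*}
 \frac{\abs{B_k}}{\abs{C_k}}
 = 2\binom{2M}{M+k}\cdot\frac{(M-k)!\,(M+k-1)!}{2\,(2M)!}
 = \frac{(M+k-1)!}{(M+k)!}
 = \frac{1}{M+k}\;,
\end{equation*}
and inserting $1/(M+k)$ into~\eqref{eq:BGK01} yields~\eqref{eq:Bk02}.

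The analytic heart of the argument --- the sharp capacity estimates and the error control $\Order{\eps^{1/2}\abs{\log\eps}^{3/2}}$ --- is imported wholesale from~\cite{BGK,BD15,Dutercq_PhD,Dutercq_in_preparation}, so the real work, and the place where a slip is easiest, is the bookkeeping: checking that all hypotheses of~\cite[Thm~3.2]{BD15} in its diffusion form are genuinely met here (uniqueness up to $G$ of the saddle orbit $C_k$ realising the communication height from $B_k$ to $B_{k-1}$, nondegeneracy throughout, and the metastable ordering with a genuine gap $\theta$, all supplied by Theorems~\ref{thm:transition_graph} and~\ref{thm:hierarchy_Bk}), and then matching the abstract prefactor of that theorem with the elementary count $M+k$.
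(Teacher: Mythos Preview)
Your proposal is correct and follows essentially the same route as the paper: both invoke \cite[Thm~3.2]{BD15} (in its diffusion version) and reduce the problem to identifying the combinatorial prefactor as $1/(M+k)$. The only cosmetic difference is that the paper computes this factor via the stabiliser ratio $\abs{G_{x^\star_k}\cap G_{x^\star_{k-1}}}/\abs{G_{x^\star_k}} = (M-k)!(M+k-1)!\big/\bigl[(M-k)!(M+k)!\bigr]$, whereas you obtain it by the equivalent orbit/saddle count $\abs{B_k}/\abs{C_k}$ (the paper in fact remarks on this equivalence immediately after its proof).
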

\begin{proof}
Theorem~3.2 in~\cite{BD15} shows that in the case of a symmetric initial
distribution, the usual Eyring--Kramers formula~\eqref{eq:BGK01} has to be
multiplied by the factor $\abs{G_{x^\star_k} \cap
G_{x^\star_{k-1}}}/\abs{G_{x^\star_k}}$, where $G_x=\setsuch{g\in G}{g(x)=x}$
is the stabiliser of $x$. If $k\geqs1$, then $\abs{G_{x^\star_k}}$ is the number
of permutations that leave invariant any element in $B_k$, and is equal to
$(M-k)!(M+k)!$. Similarly, $\abs{G_{x^\star_k} \cap G_{x^\star_{k-1}}}$ is the
number of permutations leaving invariant any two elements in $B_k$ and
$B_{k-1}$ connected in the transition graph $\cG$, which is equal to
$(M-k)!(M+k-1)!$.  
\end{proof}

Note the extra factor $(M+k)^{-1}$ in~\eqref{eq:Bk02}. In fact, $M+k$ is also
the number of saddles in $C_k$ that are connected with any given element of
$B_k$ (cf.~\cite[Eq.~(2.25)]{BD15}). The interpretation of this factor is that
since the system has $M+k$ different ways to make a transition from a given
$x^\star_k\in B_k$ to $B_{k-1}$, the transition time is divided by this factor. 

The above result will still apply for small positive coupling, but with a more
complicated expression for the prefactor. This is because the system is no
longer invariant under $\mathfrak{S}_N\times\Z_2$, but under the smaller group
$\mathfrak{D}_N\times\Z_2$, where $\mathfrak{D}_N$ is the dihedral group of
symmetries of a regular $N$-gon. The important aspect for us is that we still
have a control of the time needed to reach the family of stationary points
$B_0$, which lie at the bottom of the hierarchy and have an interpretation in
terms of particle--hole configurations. The dynamics among configurations in
$B_0$ is much slower than the relaxation towards $B_0$, because it involves
crossing the potential barrier from $B_0$ to $B_1$ via $C_1$. We will analyse it
in more detail in the next section.


\subsection{Hierarchy on $B_0$ and particle interpretation}
\label{ssec_hierarchy_B0}

We assume in this section that $0<\gamma\ll\gammac$, where $\gammac$ is the
critical coupling below which all stationary points in $B_0$, $B_1$ and $C_1$
exist without bifurcating. The central observation in order to classify points
in $B_0$ is that if $x^\star(\gamma)$ is any critical point of $V_\gamma$, then 
\begin{equation}
 \label{eq:B0_01}
 V_\gamma\bigpar{x^\star(\gamma)} = V_0\bigpar{x^\star(0)} 
 + \frac\gamma4 \sum_{i=1}^N\bigpar{x^\star_{i+1}(0) - x^\star_i(0)}^2 +
\Order{\gamma^2}\;.
\end{equation} 
This is because $V_0(x^\star(\gamma)) = V_0(x^\star(0)) + \Order{\gamma^2}$,
as the first-order term in $\gamma$ vanishes 
since $\nabla V_0(x^\star(0))=\lambda\vone$ is orthogonal to $x^\star(\gamma) -
x^\star(0)$, which belongs to the hyperplane $S$. The first term on the
right-hand side of~\eqref{eq:B0_01} is constant on each $B_k$ and each $C_k$.
The second term is determined by the number of nearest-neighbour coordinates of
$x^\star(0)$ that are different, which we are going to call \defwd{interfaces}
of the configuration. 

In particular, if $x^\star(0) \in B_0$, we know that all its components have
values $\pm1$. We define its number of interfaces as 
\begin{equation}
 \label{eq:B0_01a}
 I_{1/-1}(x^\star) = 
 \sum_{i=1}^N 1_{\set{x^\star_i(0) \neq x^\star_{i+1}(0)}}
\end{equation} 
so that we have 
\begin{equation}
 \label{eq:B0_02}
 V_\gamma\bigpar{x^\star(\gamma)} = V_0\bigpar{x^\star(0)} 
 + \gamma I_{1/-1}(x^\star) + \Order{\gamma^2}
\end{equation} 
where $V_0(x^\star(0)) = -\frac14 N$. Furthermore, we define the number of
interfaces at site $i$ as 
\begin{equation}
 \label{eq:B0_02a}
 I_{1/-1}(x^\star,i) = 1_{\set{x^\star_{i-1}(0) \neq x^\star_{i}(0)}} +
1_{\set{x^\star_i(0) \neq x^\star_{i+1}(0)}} \in\set{0,1,2}\;.
\end{equation}
Interpreting each $1$ as a particle and each $-1$ as a hole, it is natural to
introduce the following terminology:
\begin{itemiz}
\item 	a site $i$ with $2$ interfaces will be called an \emph{isolated}
particle or hole;
\item 	a sequence of at least $2$ contiguous particles or holes will be called
a \emph{cluster};
\item 	a site with $1$ interface lies at the \emph{boundary of a cluster};
\item 	a site without interface belongs to the \emph{bulk of a cluster}.
\end{itemiz}

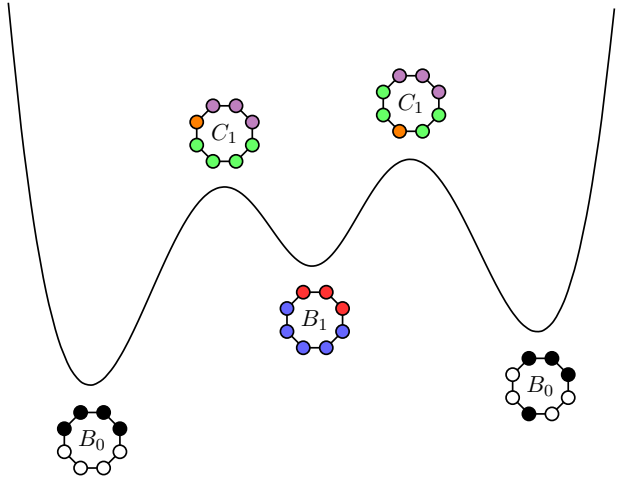
\begin{figure}[tb]
\begin{center}
\scalebox{0.8}{
\begin{tikzpicture}[>=stealth',x=1cm,y=1cm,
declare function={pot(\x) = 0.5*(\x^6 - 6*\x^4 + 8*\x^2 + 0.5*\x);
stretch(\x) = 1.5*\x*sqrt(1+0.3*\x^2);
}]
\tikzset{mynode/.style={draw=white,shape=rectangle, minimum size=1cm}}


\def\octosite{2.2pt}

\node[mynode] (B0a) at (-3.7,-2.9) {$B_0$};
\pos{-3.7}{-2.9};
\iocto{black}{black}{black}{black}{white}{white}{white}{white}{0.5};

\node[mynode] (B0b) at (3.75,-2) {$B_0$};
\pos{3.75}{-2};
\iocto{white}{black}{black}{black}{white}{white}{black}{white}{0.5};

\node[mynode] (B1) at (0,-0.9) {$B_1$};
\pos{0}{-0.9};
\iocto{\oblue}{\ored}{\ored}{\ored}{\oblue}{\oblue}{\oblue}{\oblue}{0.5} ;

\node[mynode] (C1a) at (-1.5,2.2) {$C_1$};
\pos{-1.5}{2.2};
\iocto{\ooran}{\oviol}{\oviol}{\oviol}{\ogreen}{\ogreen}{\ogreen}{\ogreen}{0.5};

\node[mynode] (C1b) at (1.6,2.7) {$C_1$};
\pos{1.6}{2.7};
\iocto{\ogreen}{\oviol}{\oviol}{\oviol}{\ogreen}{\ogreen}{\ooran}{\ogreen}{0.5};

\draw[black,thick,-,smooth,domain=-2.18:2.15,samples=75,/pgf/fpu,
/pgf/fpu/output format=fixed] plot ({stretch(\x)}, {pot(\x)});

\end{tikzpicture}
}
\end{center}
\vspace{-5mm}
\caption[]{Example of an allowed transition, from a configuration in $B_0$ with
two interfaces to a configuration in $B_0$ with $4$ interfaces. The net effect
is that a particle has hopped by two sites.}
\label{fig:transition_BC} 
\end{figure}

\begin{lemma}
\label{lem:B0} 
Let $x^\star$ be a critical point in $B_0$ and write $M=\frac N2 \geqs 4$. 
Then the following properties hold.
\begin{enum}
\item 	The total number of interfaces $I_{1/-1}(x^\star)$ is even.
\item 	If $I_{1/-1}(x^\star)=2$, then $x^\star$ consists in a cluster of $M$
particles and a cluster of $M$ holes.
\item 	If $I_{1/-1}(x^\star)>M$, then $x^\star$ has at least one isolated
site. 
\item 	Among the $x^\star\in B_0$ with $I_{1/-1}(x^\star)\in\intint{4}{M}$, 
there exist both configurations with isolated sites and configurations without
isolated sites.
\end{enum}
\end{lemma}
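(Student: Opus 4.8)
The plan is to reduce everything to elementary combinatorics of binary cyclic words. Recall from Section~\ref{ssec_gamma0} (Proposition~\ref{prop_saddles} applied to the triple $(0,M,M)$, together with the description given just before the lemma) that $B_0$ consists \emph{precisely} of all configurations on the cycle $\Z/N\Z$ having $M$ coordinates equal to $+1$ (particles) and $M$ equal to $-1$ (holes); so in parts~(i)--(iii) I may argue for an arbitrary such configuration, and in part~(iv) it suffices to exhibit suitable cyclic words. Throughout I would work with the notion of a \emph{maximal arc} — a maximal block of consecutive equal coordinates — noting that a site is isolated exactly when it forms a maximal arc of length~$1$. For part~(i), I would set $s_i=x^\star_i\in\set{-1,+1}$ and observe that $s_is_{i+1}=-1$ precisely when $i$ is an interface, so that $(-1)^{I_{1/-1}(x^\star)}=\prod_{i=1}^N s_is_{i+1}=\bigpar{\prod_{i=1}^N s_i}^2=1$; hence the number of interfaces is even (and incidentally $\geqs2$, since $I_{1/-1}(x^\star)=0$ would force all coordinates equal, which is impossible for $M\geqs4$).

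The key structural observation, which I would record before the remaining parts, is that a configuration with $I$ interfaces decomposes into exactly $I$ maximal arcs, alternating between particle-arcs and hole-arcs; hence there are $I/2$ arcs of each type, the particle-arcs having total length $M$ and likewise the hole-arcs. Part~(ii) is then immediate: when $I=2$ there is a single particle-arc of length $M$ and a single hole-arc of length $M$, each a cluster since $M\geqs4\geqs2$. Part~(iii) follows by contraposition: if $x^\star$ has no isolated site then every maximal arc has length at least $2$, so summing the lengths of the $I$ arcs yields $N\geqs2I$, and since $N=2M$ this gives $I\leqs M$.

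For part~(iv), I would fix an even integer $I$ with $4\leqs I\leqs M$, set $r=I/2\geqs2$, and write a candidate cyclic word as $P_1H_1P_2H_2\cdots P_rH_r$ with $P_j$ a particle-arc and $H_j$ a hole-arc. A configuration \emph{without} isolated sites is obtained by taking $\abs{P_1}=\abs{H_1}=M-I+2\geqs2$ and $\abs{P_j}=\abs{H_j}=2$ for $2\leqs j\leqs r$; a configuration \emph{with} an isolated site is obtained by taking $\abs{P_2}=\abs{H_1}=M-r+1$ and all remaining arcs of length $1$, so that $P_1$ is an isolated particle. In each case one verifies that the arc lengths are positive, alternate in type (so no two adjacent arcs merge), and sum to $M$ within each type, which is exactly what makes the word have $M$ particles, $M$ holes, and precisely $I$ interfaces. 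I do not expect any genuine obstacle: the only point needing a little care is this bookkeeping in part~(iv), and the hypothesis $M\geqs4$ is what makes the range $\intint{4}{M}$ nonempty and the displayed lengths admissible.
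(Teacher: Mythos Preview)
Your proof is correct and follows essentially the same elementary combinatorial approach as the paper: both rely on the decomposition into maximal arcs (what the paper calls clusters and isolated sites, with $p=\Nc+\Ni$) and the inequality $N\geqs 2\Nc+\Ni$. Your algebraic product trick for part~(i) and your explicit arc-length prescriptions in part~(iv) are slightly tidier than the paper's more telegraphic phrasing, but the underlying argument is the same.
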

\begin{proof}
Denote by $\Nc$ the number of clusters, by $\Ni$ the number of isolated
sites, and by $p=I_{1/-1}(x^\star)$ the number of interfaces. Then we have
$p=\Nc+\Ni$, which is necessarily even. Since clusters have at least two sites, 
$N\geqs 2\Nc+\Ni$, implying $\Nc\leqs N-p$ and thus $\Ni\geqs 2p -N$. Thus if
$p>M$, then $\Ni>0$. If $4\leqs p\leqs M$, then a possible configuration
consists in $p-2$ clusters of size $2$, leaving at least $4$ sites that can be
split into $2$ more clusters. Another possibility is to have $p-2$ isolated
sites, leaving at least $N-2$ sites that can again be split into $2$ clusters.
If $p=2$, we necessarily have $2$ clusters of equal size. 
\end{proof}

This result motivates the following notation for configurations in $B_0$: 
\begin{itemiz}
\item 	$A_2$ denotes the set of all configurations with interface number
$I_{1/-1}(x^\star) = 2$; 
\item 	for even $p\in\intint{4}{M}$, $A_p$ denotes the set of all
configurations $x^\star\in B_0$ with $p$ interfaces having at least one isolated
site, and $A'_p$ denotes the set of configurations with $p$ interfaces having no
isolated site;
\item 	for even $p\in\intint{M+1}{N}$, $A_p$ denotes the set of configurations
with $p$ interfaces (which all have at least one isolated site). 
\end{itemiz}

\begin{table}[tb]
\begin{center}
\begin{tabular}{|c||c|c|c|c|}
\hline
\vrule height 14pt depth 6pt width 0pt
 & Transition & $\Delta p$ & $H^{(1)}\bigpar{x^\star_1(0), x^\star_2(0)}$ &
Saddle
\\
\hline
\hline
\hlinespace
 \rom{1} &
 \raisebox{-1mm}{\tikz{
 \bdots{-0.6} \bpart{0} \bpart{0.3} \bpart{0.6}
 \bdots{1.2} \bhole{1.8} \bhole{2.1} \bhole{2.4} \bdots{3}
 \barrow{0.3}{2.1}
 }}
 & $+4$ 
 & $\dfrac{10M^2 - 36M + 36 -3p}{4(M^2-3M+3)}$ 
 & $[0,2,p+2]$ \\
\hline
\topspace
 \rom{2}.a &
 \raisebox{-1mm}{\tikz{
 \bdots{-0.6} \bpart{0} \bpart{0.3} \bpart{0.6}
 \bdots{1.2} \bhole{1.8} \bhole{2.1} \bpart{2.4} \bdots{3}
 \barrow{0.3}{2.1}
 }}
 & $+2$ 
 & $\dfrac{2(M-3)^2 -3p}{4(M^2-3M+3)}$ 
 & $[0,2,p]$ \vspace{-2mm} \\ 
 \rom{2}.b &
 \raisebox{-1mm}{\tikz{
 \bdots{-0.6} \bpart{0} \bpart{0.3} \bhole{0.6}
 \bdots{1.2} \bhole{1.8} \bhole{2.1} \bhole{2.4} \bdots{3}
 \barrow{0.3}{2.1}
 }} 
 & 
 &
 & \\ 
\bottomspace
 \rom{2}.c &
 \raisebox{-1mm}{\tikz{
 \bdots{-0.6} \bpart{0} \bpart{0.3} 
 \bhole{0.6} \bhole{0.9} \bdots{1.5}
 \narrow{0.3}{0.6}
 }} 
 & 
 &
 & \\
\hline
\hlinespace
 \rom{3} &
 \raisebox{-1mm}{\tikz{
 \bdots{-0.6} \bpart{0} \bpart{0.3} \bhole{0.6}
 \bdots{1.2} \bhole{1.8} \bhole{2.1} \bpart{2.4} \bdots{3}
 \barrow{0.3}{2.1}
 }} 
 & $0$ 
 & $\dfrac{-2M^2 +6M -3p}{4(M^2-3M+3)}$ 
 & $[1,1,p-1]$ \\
\hline
\topspace
 \rom{4}.a &
 \raisebox{-1mm}{\tikz{
 \bdots{-0.6} \bpart{0} \bpart{0.3} \bpart{0.6}
 \bdots{1.2} \bpart{1.8} \bhole{2.1} \bpart{2.4} \bdots{3}
 \barrow{0.3}{2.1}
 }}
 & $0$ 
 & $\dfrac{-6M^2 +12M -3p}{4(M^2-3M+3)}$ 
 & $[0,2,p-2]$ \vspace{-2mm} \\ 
 \rom{4}.b &
 \raisebox{-1mm}{\tikz{
 \bdots{-0.6} \bhole{0} \bpart{0.3} \bhole{0.6}
 \bdots{1.2} \bhole{1.8} \bhole{2.1} \bhole{2.4} \bdots{3}
 \barrow{0.3}{2.1}
 }} 
 & 
 &
 & \\ 
 \rom{4}.c &
 \raisebox{-1mm}{\tikz{
 \bdots{-0.6} \bpart{0} \bpart{0.3} 
 \bhole{0.6} \bpart{0.9} \bdots{1.5}
 \narrow{0.3}{0.6}
 }} 
 & 
 &
 & \\
 \bottomspace
 \rom{4}.d &
 \raisebox{-1mm}{\tikz{
 \bdots{-0.6} \bhole{0} \bpart{0.3} 
 \bhole{0.6} \bhole{0.9} \bdots{1.5}
 \narrow{0.3}{0.6}
 }} 
 & 
 &
 & \\
 \cline{1-3}
 \topspace
 \rom{5}.a &
 \raisebox{-1mm}{\tikz{
 \bdots{-0.6} \bpart{0} \bpart{0.3} \bhole{0.6}
 \bdots{1.2} \bpart{1.8} \bhole{2.1} \bpart{2.4} \bdots{3}
 \barrow{0.3}{2.1}
 }} 
 & $-2$ 
 &
 & \\ 
 \rom{5}.b &
 \raisebox{-1mm}{\tikz{
 \bdots{-0.6} \bhole{0} \bpart{0.3} \bhole{0.6}
 \bdots{1.2} \bhole{1.8} \bhole{2.1} \bpart{2.4} \bdots{3}
 \barrow{0.3}{2.1}
 }} 
 & 
 &
 & \\ 
 \bottomspace
 \rom{5}.c &
 \raisebox{-1mm}{\tikz{
 \bdots{-0.6} \bhole{0} \bpart{0.3} 
 \bhole{0.6} \bpart{0.9} \bdots{1.5}
 \narrow{0.3}{0.6}
 }} 
 & 
 &
 & \\
 \cline{1-3}
 \topspace
 \bottomspace
 \rom{6} &
 \raisebox{-1mm}{\tikz{
 \bdots{-0.6} \bhole{0} \bpart{0.3} \bhole{0.6}
 \bdots{1.2} \bpart{1.8} \bhole{2.1} \bpart{2.4} \bdots{3}
 \barrow{0.3}{2.1}
 }} 
 & $-4$ 
 &
 & \\ 
\hline
\end{tabular}
\end{center}
\caption[]{List of allowed transitions between elements of $B_0$, viewed as
a particle moving into a hole. The different columns show, respectively,
the type of transition, the change $\Delta p$ of the number of interfaces, the
first-order correction to the communication height, and the numbers of
interfaces of types $\alpha'_0/\alpha'_1$, $\alpha'_0/\alpha'_2$ and
$\alpha'_1/\alpha'_2$ of the highest saddle encountered during the
transition (cf.~Appendix~\ref{ssec_proof_B0}).}
\label{table:transitions_B0} 
\end{table}

We now need to determine the communication heights between configurations in
these different sets for small positive $\gamma$. For this, we have to take into
account the fact that any transition between two configurations in $B_0$
involves crossing two $1$-saddles in $C_1$, separated by an element of $B_1$
(\figref{fig:transition_BC}). The communication height will thus be determined
by the highest of the two saddles. Examining the different possible cases
yields the following result, which is proved in Appendix~\ref{ssec_proof_B0}. 

\begin{prop}[Transitions between configurations in $B_0$]
\label{prop:transitions_B0} 
Let $x^\star_1(\gamma), x^\star_2(\gamma) \in B_0$ be two particle/hole
configurations, and denote by $p=I_{1/-1}(x^\star_1(0))$ the number of
interfaces of $x^\star_1(0)$. Then a transition between these configurations is
possible if and only if $x^\star_2(0)$ is obtained by interchanging a particle
and a hole in $x^\star_1(0)$. The interface number of $x^\star_2(0)$ satisfies 
\begin{equation}
 \label{eq:B0_03} 
 I_{1/-1}\bigpar{x^\star_2(0)} \in \set{p-4,p-2,p,p+2,p+4}\;.
\end{equation} 
The communication height from $x^\star_1(\gamma)$ to $x^\star_2(\gamma)$ admits
the expansion  
\begin{equation}
 \label{eq:B0_04}
 H\bigpar{x^\star_1(\gamma), x^\star_2(\gamma)}
 = H^{(0)} + \gamma H^{(1)}\bigpar{x^\star_1(0), x^\star_2(0)} +
\Order{\gamma^2}\;,
\end{equation} 
where 
\begin{equation}
 \label{eq:B0_05} 
 H^{(0)} = V_0(C_1) - V_0(B_0) 
 = \frac{M(M-1)}{4(M^2-3M+3)}
\end{equation} 
depends only on $M=\frac N2$, while $H^{(1)}(x^\star_1(0), x^\star_2(0))$ also
depends on $p$ and on the number of interfaces of the two exchanged sites as
detailed in Table~\ref{table:transitions_B0}.
\end{prop}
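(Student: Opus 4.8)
The plan is to chain together the description of the transition graph from Theorem~\ref{thm:transition_graph}, the energy expansions~\eqref{eq:B0_01}--\eqref{eq:B0_02}, and Theorem~\ref{thm:hierarchy_Bk}. The last of these guarantees that a minimal path joining two configurations of $B_0$ which does not revisit $B_0$ must ascend over a $1$-saddle $z_1\in C_1$ into the basin of a minimum $y^\star\in B_1$ and then descend over a second $1$-saddle $z_2\in C_1$; in particular the communication height is attained at one of $z_1,z_2$. The proposition then splits into a combinatorial part (which transitions occur and how the interface number changes) and a computational part (the two orders of the communication height), and the whole point is that everything is controlled by the local environment of the two sites that are exchanged.

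First I would settle the characterisation of admissible transitions and the bound~\eqref{eq:B0_03}. Applying the connection rules~\eqref{eq:gamma0_rules} with $k=1$ on the segment $x^\star_1(0)\to z_1\to y^\star$, one gets that, up to relabelling (holes of $x^\star_1(0)$ become $\alpha'_1$ and particles become $\alpha'_2$ in $z_1$), all sites keep their role except one distinguished site $i$, which carries the value $\alpha'_0$ in $z_1$ and becomes an $\alpha''_2$-site of $y^\star$. Reading the same rules backwards on $y^\star\to z_2\to x^\star_2(0)$, the distinguished site of $z_2$ must be one of the $\alpha''_2$-sites of $y^\star$, namely either $i$ itself (which returns $x^\star_1(0)$) or one of the original particle sites $j$; in the latter case site $i$ becomes a particle and site $j$ a hole of $x^\star_2(0)$, all other coordinates being unchanged. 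Hence $x^\star_2(0)$ is obtained from $x^\star_1(0)$ by interchanging a particle and a hole, and conversely every such interchange is realised this way (at $\gamma=0$ by the $\mathfrak{S}_N$-symmetry, and then for $0<\gamma\ll\gammac$ since the relevant stationary points persist, cf.\ Theorem~\ref{thm:persistence}). Interchanging the values at two sites $i\neq j$ toggles the interface status of the edges incident to $\{i,j\}$ — four distinct edges when $i,j$ are not neighbours, and only two when they are (the common edge being unaffected) — so $\Delta p\in\{-4,-2,0,2,4\}$, which with Lemma~\ref{lem:B0}(i) gives~\eqref{eq:B0_03}. The six rows of Table~\ref{table:transitions_B0} correspond precisely to the possible pairs $(I_{1/-1}(x^\star_1(0),i),I_{1/-1}(x^\star_1(0),j))$, i.e.\ to the local environments of the incoming hole $i$ and the outgoing particle $j$.

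Next I would compute the communication height. By the reduction above, $H(x^\star_1(\gamma),x^\star_2(\gamma))=\max_{\ell\in\{1,2\}}V_\gamma(z_\ell(\gamma))-V_\gamma(x^\star_1(\gamma))$, and~\eqref{eq:B0_02} gives $V_\gamma(x^\star_1(\gamma))=-\tfrac N4+\gamma p+\Order{\gamma^2}$. For the saddles I would use the explicit $C_1$-coordinates of Appendix~\ref{ssec_proof_landscape_0}: a saddle in $C_1$ has one coordinate $\alpha'_0$, $M-1$ coordinates $\alpha'_1$ and $M$ coordinates $\alpha'_2$, with $(\alpha'_0,\alpha'_1,\alpha'_2)$ proportional to $(1,-(M-1),M-2)$ and normalised by $(M^2-3M+3)^{-1/2}$ (from $\sum_j\alpha'_j=0$, $\sum_{a<b}\alpha'_a\alpha'_b=-1$ and the multiplicity relation $\alpha'_0+(M-1)\alpha'_1+M\alpha'_2=0$). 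Substituting into $V_0(C_1)=U(\alpha'_0)+(M-1)U(\alpha'_1)+MU(\alpha'_2)$ and using $V_0(B_0)=-\tfrac N4$ yields $H^{(0)}=V_0(C_1)-V_0(B_0)=\tfrac{M(M-1)}{4(M^2-3M+3)}$, which is~\eqref{eq:B0_05}. For the $\Order{\gamma}$ term, \eqref{eq:B0_01} together with $(\alpha'_0-\alpha'_1)^2=M^2/(M^2-3M+3)$, $(\alpha'_0-\alpha'_2)^2=(M-3)^2/(M^2-3M+3)$, $(\alpha'_1-\alpha'_2)^2=(2M-3)^2/(M^2-3M+3)$ gives
\[
V_\gamma(z_\ell(\gamma))=V_0(C_1)+\frac{\gamma}{4(M^2-3M+3)}\bigbrak{n_{01}(z_\ell)M^2+n_{02}(z_\ell)(M-3)^2+n_{12}(z_\ell)(2M-3)^2}+\Order{\gamma^2}\;,
\]
where $n_{ab}(z_\ell)$ is the number of edges of $z_\ell$ joining an $\alpha'_a$-site to an $\alpha'_b$-site.

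Finally I would identify these counts in each case. Since the distinguished coordinate of $z_1$ sits at the incoming hole $i$, its two neighbours carry $\alpha'_1$ or $\alpha'_2$ according as they were holes or particles of $x^\star_1(0)$, while every other edge keeps its type; hence $n_{02}(z_1)=I_{1/-1}(x^\star_1(0),i)$, $n_{01}(z_1)=2-I_{1/-1}(x^\star_1(0),i)$, $n_{12}(z_1)=p-I_{1/-1}(x^\star_1(0),i)$, and symmetrically for $z_2$ with $i$ replaced by $j$ and $x^\star_1$ by $x^\star_2$. Because $(2M-3)^2$ dominates both $M^2$ and $(M-3)^2$ for $M\geqs4$, the higher of the two saddles is the one with the larger $n_{12}$, which a short inspection of the four local patterns pins down in each case; inserting the resulting triple $(n_{01},n_{02},n_{12})$ into $H^{(1)}=\tfrac14\max_\ell\bigbrak{n_{01}(z_\ell)(\alpha'_0-\alpha'_1)^2+n_{02}(z_\ell)(\alpha'_0-\alpha'_2)^2+n_{12}(z_\ell)(\alpha'_1-\alpha'_2)^2}-p$ reproduces the entries of Table~\ref{table:transitions_B0}, the rows with $\Delta p<0$ following by reading the reversed transition. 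I expect the main difficulty to be exactly this bookkeeping: threading the two applications of~\eqref{eq:gamma0_rules} to pin down $y^\star,z_1,z_2$ from a prescribed swap, and then keeping straight which of $z_1,z_2$ dominates in each environment — in particular handling the borderline configurations where the two $\Order{\gamma}$ contributions coincide and one must check that the $\Order{\gamma^2}$ discrepancy does not enter $H^{(1)}$.
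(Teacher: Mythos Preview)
Your overall architecture matches the paper's: reduce to paths $B_0\to C_1\to B_1\to C_1\to B_0$, read off the particle/hole swap from~\eqref{eq:gamma0_rules}, and compute the saddle heights via~\eqref{eq:B0_01} using the explicit $C_1$-coordinates. The combinatorial part and the computation of $H^{(0)}$ are fine.

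The genuine gap is in the first-order term. For a fixed swap $(i,j)$ there are \emph{two} admissible paths through $B_1$, corresponding to the two sign choices $(\alpha_1,\alpha_2)=(1,-1)$ or $(-1,1)$: in one the distinguished coordinate of $z_1$ sits at the hole $i$, in the other it sits at the particle $j$, and the intermediate $y^\star\in B_1$ is different in each case. The communication height is the \emph{minimum over the two paths} of the maximum of the two saddle heights along that path. Your argument fixes the convention ``holes $\to\alpha'_1$'' and thereby computes only one of the two paths. This is not harmless: for type~\rom{2}.b (hole in bulk, particle at a boundary) your path has $z_1=[2,0,p]$ and $z_2=[1,1,p+1]$, with maximum $[1,1,p+1]$, whereas the other path has $z_1=[1,1,p-1]$ and $z_2=[0,2,p]$, with maximum $[0,2,p]$; the latter is strictly lower, and it is the value recorded in Table~\ref{table:transitions_B0}. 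So your formula $H^{(1)}=\tfrac14\max_{\ell}[\dots]-p$ must be replaced by a minimum over the two sign conventions followed by the max over $\ell$, and the case analysis has to be redone accordingly. The paper flags exactly this point (``the transition rules allow for two possible paths\dots it is necessary to determine the minimum'') and handles it via the comparison Lemma~\ref{lem:saddle_heights}.

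A smaller issue: the claim that ``the higher of the two saddles is the one with the larger $n_{12}$'' because $(2M-3)^2$ dominates $M^2$ and $(M-3)^2$ is not a valid general argument --- a shift of $2$ between $n_{01}$ and $n_{02}$ can outweigh a unit change in $n_{12}$ for small $M$. The conclusion happens to hold in the specific pairs that arise, but you should prove it by the explicit inequalities $V^{(1)}([2,0,p])>V^{(1)}([0,2,p])>V^{(1)}([1,1,p-1])>V^{(1)}([2,0,p-2])$, as the paper does.
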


Table~\ref{table:transitions_B0} shows that all allowed transitions between
particle/hole configurations have simple physical interpretations. In
particular, only the last four types of transitions decrease the number of
interfaces. Types \rom{5}.b and \rom{5}.c can be viewed as an isolated particle
merging with another particle (isolated or at the boundary of a cluster), type
\rom{5}.a as a particle splitting from another one to fill a hole between two
particles, and type \rom{6} as an isolated particle jumping into a hole
between two particles. Types \rom{1} and \rom{2} are just the reversed versions
of types \rom{6} and \rom{5}, while all transitions of type \rom{3} and \rom{4}
are their own reverse. 

\begin{figure}[tb]
\begin{center}
\scalebox{0.7}{
\begin{tikzpicture}[>=stealth',x=1cm,y=1cm]
\tikzset{mynode/.style={draw=white,shape=rectangle, minimum size=1cm}}
\tikzset{bluenode/.style={thick,draw=black,fill=\oblue,shape=circle, inner
sep=0.08cm}}


\def\octosite{3pt}

\node[mynode] (8) at (0,0) {\Large $A_8$};
\pos{0}{0};
\iocto{black}{white}{black}{white}{black}{white}{black}{white}{0.75};

\node[mynode] (6a) at (3,-4) {\Large $A_6$};
\pos{3}{-4};
\iocto{white}{black}{black}{white}{black}{white}{black}{white}{0.75};

\node[mynode] (6b) at (5,-4) {\Large $A_6$};
\pos{5}{-4};
\iocto{white}{black}{black}{white}{black}{white}{white}{black}{0.75};

\node[mynode] (4a) at (8,0) {\Large $A_4$};
\pos{8}{0};
\iocto{white}{black}{black}{black}{white}{black}{white}{white}{0.75};

\node[mynode] (4b) at (10,0) {\Large $A_4$};
\pos{10}{0};
\iocto{white}{black}{black}{white}{black}{black}{white}{white}{0.75};

\node[mynode] (4') at (9,6) {\Large $A'_4$};
\pos{9}{6};
\iocto{white}{black}{black}{white}{white}{black}{black}{white}{0.75};

\node[mynode] (2) at (13,-4) {\Large $A_2$};
\pos{13}{-4};
\iocto{black}{black}{black}{black}{white}{white}{white}{white}{0.75};


\draw[thick,blue] (1.5,0) -- (4,0);
\draw[thick,blue,->] (4,0) -- node[blue,above] {\Large \rom{6}} (6.5,0);
\draw[thick,blue,->] (4,0) -- node[blue,right] {\Large \rom{5}} (4,-2.5);
\draw[thick,blue] (6.5,-4) -- (9,-4);
\draw[thick,blue,->] (9,-4) -- node[blue,below] {\Large \rom{6}} (11.5,-4);
\draw[thick,blue,->] (9,-4) -- node[blue,right] {\Large \rom{5}} (9,-1.5);
\draw[thick,blue] (9,1.5) -- node[blue,right,xshift=1mm] 
{\Large \rom{3}} (9,4.5);
\draw[thick,blue,->] (11.5,0) -- (13,0) -- node[blue,right] 
{\Large \rom{5}} (13,-2.5);


\node[bluenode] at (4,0) {};
\node[bluenode] at (9,-4) {};
\node[bluenode] at (9,3) {};
\node[bluenode] at (13,0) {};

\end{tikzpicture}
}
\end{center}
\vspace{-3mm}
\caption[]{Minimal transitions between particle/hole configurations in $B_0$ for
$N=8$. Arrows indicate transitions that decrease the energy, and are labelled
according to Table~\ref{table:transitions_B0}. Each node displays only one
representative of an orbit for the group action of $\mathfrak{D}_N\times\Z_2$.
The other elements of an orbit are obtained by applying rotations, reflections
and interchanging particles and holes. Blue nodes represent stationary points in
$B_1$. Not shown are transitions within the families $A_p$ and $A'_p$, which are
of type \rom{3} or \rom{4}. 
}
\label{fig:B0_N8} 
\end{figure}
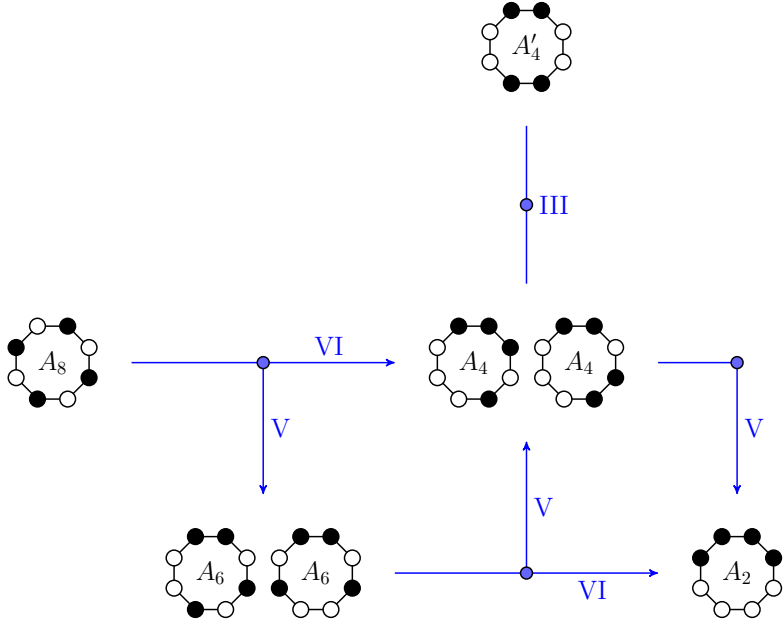

\figref{fig:B0_N8} shows the allowed transitions in the case $N=8$; only
transitions that minimise the communication height are shown. \figref{fig:N16}
shows the case $N=16$. Note that in accordance with Lemma~\ref{lem:B0}, only
configurations with $p\leqs M$ interfaces appear in the two types $A_p$ (with
isolated particles and/or holes) and $A'_p$ (without isolated particles and/or
holes). 

The first-order correction $H^{(1)}$ to communication heights depends
not only on the number $M$ of particles, but also on the number $p$ of
interfaces. This is a nonlocal effect of the mass-conservation constraint.
However, in the limit $M\to\infty$, the four possible corrections converge
respectively to $\frac52$, $\frac12$, $-\frac12$ and $-\frac32$, i.e.~they no
longer depend on $p$. 

With this information at hand, it is now possible to determine the metastable
hierarchy among the families $A_p$ and $A'_p$. The result, which is proved in
Appendix~\ref{ssec_proof_B0}, reads as follows.

\begin{theorem}[Metastable hierarchy of particle/hole configurations]
\label{thm:meta_B0}
Let $M'$ be the largest even number less or equal $M = \frac N2$. Then 
\begin{equation}
 \label{eq:B0_hierarchy}
 A_2 \prec A'_4 \prec A'_6 \prec \dots \prec A'_{M'-2} 
 \prec A'_{M'} 
 \prec A_4 \prec A_6 \prec \dots \prec A_{N-2} \prec A_N\;.
\end{equation} 
defines a metastable order of the families $A_p$ and $A'_p$.
\end{theorem}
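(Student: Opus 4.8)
The plan is to verify Definition~\ref{def:metastable_hierarchy} directly for the partition of $B_0$ into the families $A_2, A'_4,\dots,A'_{M'},A_4,\dots,A_N$, in the order~\eqref{eq:B0_hierarchy} (this refines the bottom of the hierarchy of Theorem~\ref{thm:hierarchy_Bk}). By Proposition~\ref{prop:transitions_B0} every communication height between two particle/hole configurations equals $H^{(0)}+\gamma H^{(1)}+\Order{\gamma^2}$ with $H^{(0)}$ the same for all of them, while~\eqref{eq:B0_02} gives $V_\gamma(x^\star)=-\frac14 N+\gamma\,I_{1/-1}(x^\star)+\Order{\gamma^2}$ for every $x^\star\in B_0$; moreover, as in Theorem~\ref{thm:hierarchy_Bk}, a path between two elements of $B_0$ never gains by detouring through $B_1$ or the higher spurious families, so it suffices to work with the effective transitions of Table~\ref{table:transitions_B0}. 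Hence the whole hierarchy is governed by the order-$\gamma$ coefficients, and I would take the separating constant $\theta=c(M)\gamma$ with $c(M)>0$.

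First I would record the absolute order-$\gamma$ heights of the relevant saddles. For an elementary transition of type \rom{1}--\rom{6} issued from a configuration with $p$ interfaces, Table~\ref{table:transitions_B0}, together with the reversibility identity $H(x^\star_2,x^\star_1)=H(x^\star_1,x^\star_2)+V_\gamma(x^\star_1)-V_\gamma(x^\star_2)$ (the highest saddle of a transition and of its reverse coincide; this pins down the blank type-\rom{5}/\rom{6} entries), shows that the highest saddle sits at $-\frac14 N+H^{(0)}+\gamma\,g_T(p)+\Order{\gamma^2}$, where $g_T(p)=p+H^{(1)}$ with $H^{(1)}$ the entry of the table for type $T$. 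One checks that $g_T$ is affine in $p$ with the same positive slope $(2M-3)^2/\brak{4(M^2-3M+3)}$ for every type, and that the intercepts satisfy $g_{\mathrm{I}}(p)>g_{\mathrm{II}}(p)>g_{\mathrm{III}}(p)>g_{\mathrm{IV}}(p)=g_{\mathrm{V}}(p)=g_{\mathrm{VI}}(p)$ for all $M\geqs4$ and all $p$ (the strict inequalities reduce to $2(2M-3)^2>0$, $4M^2-18M+18>0$ and $2M(2M-3)>0$; the last coincidence is that of the saddle types $[0,2,p-2]$ in the table). Two consequences I would extract: the saddle on any elementary transition is increasing in the \emph{smaller} of the two interface numbers it connects; and a transition with $\Delta p<0$ (necessarily of type \rom{5} or \rom{6}) needs an isolated site, so that from a configuration in $A'_p$ — which has none — the only available move that does not raise $p$ is of type \rom{3} (type \rom{4} needs an isolated hole).

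Next I would compute, for $x^\star\in P_k$, the descent barrier $h(x^\star):=H\bigpar{x^\star,\bigcup_{i<k}P_i}$, which by the above depends only on the class. Using the two consequences and Lemma~\ref{lem:B0}: (i) from any $x^\star\in A_p$ with $p\geqs4$ a type-\rom{5} or type-\rom{6} move is always available (an isolated particle coexists with a boundary or isolated hole, and symmetrically), and iterating such moves reaches $A_2$; since the interface count is nonincreasing, the maximal saddle is the first one, whence $h(x^\star)=H^{(0)}+\gamma\,\frac{-6M^2+12M-3p}{4(M^2-3M+3)}+\Order{\gamma^2}$; (ii) from $x^\star\in A'_p$ with $4\leqs p\leqs M'$ one must first perform one or two type-\rom{3} moves — at saddle height $g_{\mathrm{III}}(p)$ — to create an isolated site and enter $A_p$, after which the descent of~(i) continues with strictly lower saddles, whence $h(x^\star)=H^{(0)}+\gamma\,\frac{-2M^2+6M-3p}{4(M^2-3M+3)}+\Order{\gamma^2}$; and in both cases this value is minimal, since any descent below $P_k$ requires a first move that either lowers $p$ (needing, from $A'_p$, a prior type-\rom{3} move) or is a type-\rom{3}/\rom{4} move at interface level $p$. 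Symmetrically, leaving a class $P_\ell$ ($\ell\geqs2$) upward — towards $P_{\ell+1}$ or beyond — costs at least $g_{\mathrm{II}}(p_\ell)$, strictly above the descent barrier, except in the single passage $A'_{M'}\to A_4$, which again costs $g_{\mathrm{III}}(M')$; so $H\bigpar{y^\star,\bigcup_{i\leqs k}P_i\setminus P_\ell}=h(y^\star)$ for $\ell\geqs2$, whereas for $\ell=1$ it is at least $H^{(0)}+\gamma\,\frac{2(M-3)^2-6}{4(M^2-3M+3)}+\Order{\gamma^2}$.

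With these quantities~\eqref{eq:def_meta_hierarchy} becomes a comparison of order-$\gamma$ coefficients: for $x^\star\in P_k$ and $\ell<k$ one needs the $\gamma$-coefficient of $h(x^\star)$ to lie strictly below $\frac{-2M^2+6M-3p_\ell}{4(M^2-3M+3)}$, $\frac{-6M^2+12M-3p_\ell}{4(M^2-3M+3)}$ or $\frac{2(M-3)^2-6}{4(M^2-3M+3)}$ according to the type of $P_\ell$. Since the first two are strictly decreasing in their argument and $-6M^2+12M<-2M^2+6M<2(M-3)^2$ for $M\geqs4$, all the required inequalities reduce — using $4\leqs p_k$, $4\leqs p_\ell\leqs M$ for $A'$-classes, and $N=2M\geqs8$ — to elementary polynomial inequalities in $M$, such as $3(p_\ell-p_k)<4M^2-6M$ and $4M^2-18M+24>0$, each valid for every admissible $M$. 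Taking $\theta$ equal to $\gamma$ times the positive minimum of the finitely many coefficient gaps, minus the $\Order{\gamma^2}$ errors (still positive for $\gamma$ small), establishes~\eqref{eq:def_meta_hierarchy}, hence~\eqref{eq:B0_hierarchy}. The main obstacle is step~(i)--(ii): proving that the effective dynamics of Table~\ref{table:transitions_B0} is connected in the strong sense that every configuration with more than two interfaces descends to $A_2$ along a path whose maximal saddle is exactly the first type-\rom{3} (respectively type-\rom{5}/\rom{6}) saddle it meets, with no hidden bottleneck — a finite but delicate combinatorial verification; the remainder is bookkeeping with Table~\ref{table:transitions_B0}.
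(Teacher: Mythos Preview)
Your proposal is correct and follows essentially the same route as the paper: both reduce to comparing the order-$\gamma$ coefficients $H^{(1)}$, arrive at the same three descent-barrier formulas (your expressions coincide with the paper's (B.10)--(B.12)), and conclude from their monotonicity in $p$; you are simply more explicit than the paper about the right-hand side of~\eqref{eq:def_meta_hierarchy} (upward barriers and the $\ell=1$ case), which the paper dispatches by invoking the template of Theorem~\ref{thm:hierarchy_Bk}. One small imprecision: the claim that iterating type-\rom{5}/\rom{6} moves from $A_p$ always reaches $A_2$ can fail --- a single such move may land in $A'_{p-2}$, from which no type-\rom{5}/\rom{6} move is available --- but since $A'_{p-2}$ already belongs to the target $\bigcup_{i<k}P_i$, one move suffices and your value of $h(A_p)$ is unaffected.
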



\section{Analysis of the dynamics}
\label{sec_dynamics}


\subsection{Interface dynamics}
\label{ssec_dynamics_interface}

The transition rules and communication heights given in
Proposition~\ref{prop:transitions_B0} and the metastable hierarchy obtained in
Theorem~\ref{thm:meta_B0} yield complementary information on the dynamics
between particle/hole configurations in $B_0$. Recall that the process behaves
essentially as a Markovian jump process with transition rates of order
$\e^{-H(x^\star_i,x^\star_j)/\eps}$, while the
hierarchy~\eqref{eq:B0_hierarchy} classifies the states according to the time
the process spends in them in metastable equilibrium. 

At the bottom of the metastable hierarchy, we find the set $A_2$ of
configurations having one cluster of $M$ particles: this constitutes the ground
state of the system, which can be interpreted as a solid or condensed phase. At
the top of the hierarchy on $B_0$, we find the set $A_N$ of states with $N$
interfaces. These consist in $M$ isolated particles, and can be interpreted as a
gaseous phase. 

\begin{figure}[tb]
\begin{center}
\scalebox{0.9}{
\begin{tikzpicture}[>=stealth',shorten >=2pt,auto,node
distance=1.5cm, thick,main node/.style={circle,scale=0.7,minimum size=1.2cm,
fill=mymediumcyancolor!50,draw,font=\Large}]

\tikzset{bluenode/.style={thick,draw=black,fill=\oblue,shape=circle, inner
sep=0.06cm}}

  \node[main node] (16) at (0,0) {$A_{16}$};
  \node[main node] (12) at (3,0) {$A_{12}$};
  \node[main node] (8) at (6,0) {$A_{8}$};
  \node[main node] (4) at (9,0) {$A_{4}$};
  \node[main node] (14) at (1.5,-1.5) {$A_{14}$};
  \node[main node] (10) at (4.5,-1.5) {$A_{10}$};
  \node[main node] (6) at (7.5,-1.5) {$A_{6}$};
  \node[main node] (2) at (10.5,-1.5) {$A_{2}$};

  \node[main node] (8') at (6,3) {$A'_{8}$};
  \node[main node] (6') at (7.5,-4.5) {$A'_{6}$};
  \node[main node] (4') at (9,3) {$A'_{4}$};
  
  \node[bluenode] (121416) at (1.5,0) {};
  \node[bluenode] (81012) at (4.5,0) {};
  \node[bluenode] (468) at (7.5,0) {};
  \node[bluenode] (24) at (10.5,0) {};
  \node[bluenode] (101214) at (3,-1.5) {};
  \node[bluenode] (6810) at (6,-1.5) {};
  \node[bluenode] (246) at (9,-1.5) {};
  
  \node[bluenode] (88') at (6,1.5) {};
  \node[bluenode] (66') at (7.5,-3) {};
  \node[bluenode] (44') at (9,1.5) {};

  \draw[thick,blue] (121416) -- (16); 
  \draw[thick,blue,->] (121416) -- (14); 
  \draw[thick,blue,->] (121416) -- (12); 

  \draw[thick,blue] (101214) -- (14); 
  \draw[thick,blue,->] (101214) -- (12); 
  \draw[thick,blue,->] (101214) -- (10); 

  \draw[thick,blue] (81012) -- (12); 
  \draw[thick,blue,->] (81012) -- (10); 
  \draw[thick,blue,->] (81012) -- (8); 

  \draw[thick,blue] (6810) -- (10); 
  \draw[thick,blue,->] (6810) -- (8); 
  \draw[thick,blue,->] (6810) -- (6); 

  \draw[thick,blue] (468) -- (8); 
  \draw[thick,blue,->] (468) -- (6); 
  \draw[thick,blue,->] (468) -- (4); 

  \draw[thick,blue] (246) -- (6); 
  \draw[thick,blue,->] (246) -- (4); 
  \draw[thick,blue,->] (246) -- (2); 

  \draw[thick,blue] (24) -- (4); 
  \draw[thick,blue,->] (24) -- (2); 

  \draw[thick,blue] (88') -- (8); 
  \draw[thick,blue] (88') -- (8'); 

  \draw[thick,blue] (66') -- (6); 
  \draw[thick,blue] (66') -- (6'); 

  \draw[thick,blue] (44') -- (4); 
  \draw[thick,blue] (44') -- (4'); 

\end{tikzpicture}
}
\end{center}
\vspace{-3mm}
\caption[]{Minimal transitions between particle/hole configurations in $B_0$
in the case $N=16$. Arrows indicate transitions that decrease the energy.}
\label{fig:N16} 
\end{figure}
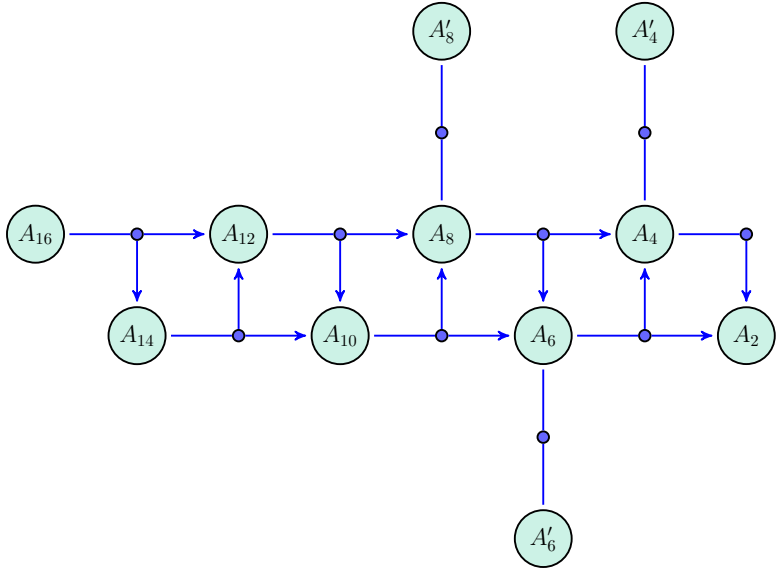

The transition graph implied by Proposition~\ref{prop:transitions_B0} (and
illustrated in Figures~\ref{fig:B0_N8} and~\ref{fig:N16}) shows that when
starting in the configuration $A_N$, the most likely transitions gradually
decrease the number $p$ of interfaces, in steps of $2$ or $4$. Thus the system
tends to gradually build clusters of increasing size. As the communication
heights given in Table~\ref{table:transitions_B0} increase as $p$ decreases,
this condensation process becomes slower as the size of clusters increases. This
is different from the usual Kawasaki dynamics, in which the transition rates
depend only on the change $\Delta p$ of the number of interfaces. Note in
particular that for given $p$, transitions of type \rom{4}, \rom{5} and \rom{6}
all occur at the same rate.

When the number $p$ of interfaces reaches $M$ (meaning that there are on average
$2$ particles per cluster), new configurations $A'_p$ become possible. These
consist of $p$ clusters separated by at least $2$ sites, and appear as dead ends
on the transition graph. The metastable order~\eqref{eq:B0_hierarchy} shows that
these configurations are actually more stable than those of type $A_p$,
$p\geqs4$, which have isolated particles or holes, and act as gateways to
configurations with fewer interfaces. In particular, configurations in $A'_4$
are those with the longest metastable lifetime. The system can spend
considerable time trapped in configurations with $p\geqs2$ clusters of
particles, separated by $p$ clusters of holes (as seen 
in~\figref{fig:interface_dynamics}). 


\subsection{Spectral gap}
\label{ssec_spectral gap}

Another interesting information on the process that can be obtained from its
metastable hierarchy is its spectral gap. We already know that the generator
$\cL$ admits the eigenvalue $0$, which is associated with the invariant
distribution~\eqref{eq:model09}. This eigenvalue is simple because the process
is irreducible and positive recurrent. The spectral gap is thus given by the
smallest nonzero eigenvalue $\lambda_2$ of $-\cL$, which governs the rate of
relaxation to equilibrium. 

At first glance, one might think that the spectral gap has order
$\e^{-(V_\gamma(z^\star)-V_\gamma(y^\star))/\eps}$, where $z^\star$ is a
$1$-saddle in $C_1$ and $y^\star$ is a local minimum in $B_1$. Indeed, this is
the inverse of the longest transition time obtained in Corollary~\ref{cor:Bk}. 
However, the corollary only applies to symmetric initial
distributions, and transitions from $B_1$ to $B_0$ via $C_1$ are not the slowest
processes of the system. In fact, this role is played by transitions between
configurations in $A_2$, which occur via saddles in $C_1$, leading to a
spectral gap of order $\e^{-(V_\gamma(z^\star)-V_\gamma(x^\star))/\eps}$, where
$x^\star$ is a local minimum in $A_2$ rather than $B_1$. Applying the
theory for symmetric processes in~\cite{BD15}, we obtain the following result. 
Its proof is given in Appendix~\ref{sec_proof_gap}.

\begin{theorem}[Spectral gap]
\label{thm:spectral_gap}
If $\eps$ is small enough, then the smallest nonzero eigenvalue of
$-\cL$ is given by 
\begin{equation}
 \label{eq:sg01}
 \lambda_2 
 = 4\sin^2 \biggpar{\frac{\pi}{N}}
 \frac{\abs{\lambda_-(z^\star)}}{2\pi} 
 \sqrt{\frac{\det\nabla^2V_\gamma(x^\star)}
 {\abs{\det\nabla^2V_\gamma(z^\star)}}}
\e^{-\brak{V_\gamma(z^\star)-V_\gamma(x^\star)}/\eps} 
\bigbrak{1 + \Order{\eps^{1/2}\abs{\log\eps}^{3/2}}}\;, 
\end{equation} 
where $x^\star$ is any configuration in $A_2$, and $z^\star$ is any saddle in
$C_1$ whose limit as $\gamma\to0$ has exactly $3$ interfaces. In particular, we
have 
\begin{align}
\nonumber
 V_\gamma(z^\star)-V_\gamma(x^\star)
 &= \frac{M(M-1)}{4(M^2-3M+3)} + \gamma \, \frac{M^2-6M+6}{2(M^2-3M+3)} 
 + \Order{\gamma^2}\\
 &= \frac14 + \frac12\gamma + \Order{N^{-1}} + \Order{\gamma^2}
 \;.
 \label{eq:sg02}
\end{align} 
Furthermore, 
\begin{align}
\nonumber
 \abs{\lambda_-(z^\star)}
 \sqrt{\frac{\det\nabla^2V_\gamma(x^\star)}
 {\abs{\det\nabla^2V_\gamma(z^\star)}}}
 &= \sqrt{2} \Biggbrak{\frac{M^2-3M+3}{(M-\frac32)\sqrt{M(M-3)}}}^{M-2} 
 + \Order{\gamma} \\
 &= \sqrt{2} + \Order{N^{-1}} + \Order{\gamma}\;.
 \label{eq:sg03}
\end{align} 
\end{theorem}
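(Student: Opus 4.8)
The plan is to combine three ingredients: the metastable hierarchy on $B_0$ from Theorem~\ref{thm:meta_B0}, the communication-height expansion of Proposition~\ref{prop:transitions_B0}, and the symmetry-corrected Eyring--Kramers formula of \cite{BD15,Dutercq_PhD,Dutercq_in_preparation}. First I would argue that the smallest nonzero eigenvalue $\lambda_2$ is governed by transitions out of the ground state $A_2$. By Theorem~\ref{thm:meta_B0}, $A_2$ sits at the bottom of the hierarchy on $B_0$, and by Theorem~\ref{thm:hierarchy_Bk} the set $B_0$ sits at the bottom of the global hierarchy on $\bigcup_k B_k$; hence the deepest well of the landscape is (a point of) $A_2$, and the second-deepest metastable set is the remainder of $A_2$ itself, since $A_2$ is a full group orbit under $\mathfrak{D}_N\times\Z_2$ of configurations all having the same potential value. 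The slowest relaxation process is therefore the equilibration \emph{within} $A_2$, i.e.\ moving from one single-cluster configuration to another. By Table~\ref{table:transitions_B0}, such a move is a type \rom{1} transition (the reverse of type \rom{6}): a particle at the boundary of the unique cluster hops into the adjacent hole, momentarily creating $p=4$ interfaces, via a saddle in $C_1$ whose $\gamma\to0$ limit has $3$ interfaces. This identifies $z^\star$ and gives the communication height $H(x^\star_1(\gamma),x^\star_2(\gamma)) = H^{(0)} + \gamma H^{(1)} + \Order{\gamma^2}$ with $H^{(0)}=\frac{M(M-1)}{4(M^2-3M+3)}$ from \eqref{eq:B0_05} and the type~\rom{1} entry of Table~\ref{table:transitions_B0} evaluated at $p=2$, namely $\frac{10M^2-36M+30}{4(M^2-3M+3)}$; subtracting $H^{(0)}$ yields the $\gamma$-coefficient $\frac{M^2-6M+6}{2(M^2-3M+3)}$ in \eqref{eq:sg02}, and the large-$M$ expansion $\frac14+\frac12\gamma+\Order{N^{-1}}$ is then immediate.

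Next I would invoke the symmetric Eyring--Kramers result. Because the dynamics on $A_2$ is irreducible under the residual symmetry group (a single orbit, acted on transitively by rotations), the relevant eigenvalue is not the full prefactor of \cite[Thm~3.2]{BD15} but the \emph{second} eigenvalue of the induced jump process on the orbit $A_2$. Here the key structural fact is that $A_2$ is a cycle: its $N$ elements (taking, say, a fixed sign convention and $M$ of them, doubled by $\Z_2$) are arranged so that each is connected to its two ``neighbours'' by a type~\rom{1}/\rom{6} pair through a saddle in $C_1$, the underlying graph being the $N$-cycle $\Z/N\Z$ (translation of the cluster by one site). The generator of a nearest-neighbour random walk on $\Z/N\Z$ with uniform rate $r$ has eigenvalues $2r\bigpar{1-\cos(2\pi j/N)} = 4r\sin^2(\pi j/N)$, $j=0,1,\dots,N-1$; the spectral gap is the $j=1$ value $4r\sin^2(\pi/N)$. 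This is precisely the origin of the factor $4\sin^2(\pi/N)$ in \eqref{eq:sg01}. The single-edge rate $r$ is given by the standard Eyring--Kramers prefactor times $\e^{-[V_\gamma(z^\star)-V_\gamma(x^\star)]/\eps}$, i.e.\ $r = \frac{\abs{\lambda_-(z^\star)}}{2\pi}\sqrt{\det\nabla^2V_\gamma(x^\star)/\abs{\det\nabla^2V_\gamma(z^\star)}}\,\e^{-[\dots]/\eps}$, which assembles into \eqref{eq:sg01}. One must check that transitions of type \rom{1} out of $A_2$ through a \emph{two}-interface saddle (triple $[0,2,4]$) rather than a three-interface one are strictly higher, so that only the $3$-interface saddles in $C_1$ contribute to leading order; this follows from comparing the relevant rows of Table~\ref{table:transitions_B0} at $p=2$, and from the fact that the $\gamma=0$ barriers are equal so the $\Order{\gamma}$ term decides — but in fact at $\gamma=0$ all of $C_1$ has the same height, so here I would instead observe that a hop of the cluster by one site necessarily passes through a $3$-interface saddle, and any competing path has communication height at least as large, with equality only on the claimed saddles.

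Finally I would compute the Hessian ratio \eqref{eq:sg03}. Using the explicit block-diagonal basis constructed in Appendix~\ref{ssec_proof_landscape_0} (the same one used to prove Proposition~\ref{prop_saddles} and Theorem~\ref{thm:transition_graph}), the Hessians of $V_0$ at a point of $A_2\subset B_0$ and at a point of $C_1$ decompose into $1\times1$ and $2\times2$ (and at most $3\times3$) blocks whose entries are rational functions of $M$ via the roots $\alpha_j$. The local minimum in $B_0$ has all coordinates $\pm1$, so $\nabla^2V_0(x^\star)=\diag(2,\dots,2)$ up to the constraint, contributing $2^{M-1}$-type factors; the saddle in $C_1$ has coordinates $-1/\sqrt{7},3/\sqrt{7},-2/\sqrt{7}$-type values in the $N=8$ case and their $M$-dependent analogues in general (cf.\ \eqref{eq:alpha_j}), with one negative eigenvalue $\lambda_-(z^\star)$. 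Taking the ratio, the bulk factors telescope and one is left with the closed form $\sqrt{2}\bigbrak{(M^2-3M+3)/((M-\tfrac32)\sqrt{M(M-3)})}^{M-2}+\Order{\gamma}$; the bracketed quantity is $1+\Order{M^{-2}}$, so the power tends to $1$ and \eqref{eq:sg03} collapses to $\sqrt2+\Order{N^{-1}}+\Order{\gamma}$. The $\Order{\gamma}$ corrections come from perturbation of the stationary points (shift of order $\gamma$, Theorem~\ref{thm:persistence}) together with Bauer--Fike-type control of the eigenvalues, as in Section~\ref{ssec_gamma_small}.

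\medskip

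\noindent\textbf{Main obstacle.} The delicate point is the reduction of $\lambda_2$ to the spectral gap of a nearest-neighbour walk on the $N$-cycle $A_2$: one must verify that no \emph{other} metastable set (in particular the spurious family $B_1$, reached via $C_1$ from $B_0$) produces an eigenvalue of the same or smaller magnitude. The barrier $V_\gamma(C_1)-V_\gamma(B_1)$ is genuinely smaller than $V_\gamma(C_1)-V_\gamma(A_2)$ (Figure~\ref{fig:BC}), so naively $B_1\to B_0$ is faster; the resolution is that $B_1$, $B_2,\dots$ lie \emph{above} $B_0$ in the hierarchy and hence correspond to eigenvalues \emph{larger} than $\lambda_2$ — the relevant quantity for a metastable set is the barrier to descend to the set below it, which for $A_2$ (equivalently, to move within the bottom orbit) is the full $C_1$-to-$A_2$ barrier. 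Making this rigorous requires the orbit-version of the hierarchy argument from \cite{BD15,Dutercq_in_preparation}, i.e.\ identifying $\lambda_2$ as the smallest nonzero eigenvalue of the reduced generator on the deepest orbit $A_2$, and this is where the factor $4\sin^2(\pi/N)$ — as opposed to, say, the number $M+k$ of available transitions appearing in Corollary~\ref{cor:Bk} — genuinely enters: it is the cyclic, not the complete-graph, structure of $A_2$ that matters.
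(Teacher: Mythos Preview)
Your high-level strategy --- identify $A_2$ as the bottom of the hierarchy, recognise the cyclic structure, and read off $4\sin^2(\pi/N)$ as the gap of a cycle Laplacian --- is sound and matches the paper's conclusion. But there is one genuine error and one genuine gap.

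The error is in identifying the transition. Moving from $x^\star\in A_2$ to the neighbouring configuration $rx^\star\in A_2$ has $\Delta p=0$, so by Table~\ref{table:transitions_B0} it is a type~\rom{3} move: a boundary particle jumps to the \emph{distant} hole adjacent to the opposite cluster boundary, with saddle triple $[1,1,p-1]=[1,1,1]$ (three interfaces, as the theorem states). Your verbal description ``hops into the adjacent hole'' is type~\rom{2}.c ($\Delta p=+2$), and your label ``type~\rom{1}'' is yet another thing ($\Delta p=+4$). The ``subtract $H^{(0)}$'' step is not meaningful either: $H^{(0)}$ and $H^{(1)}$ are coefficients at different orders in $\gamma$. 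Your derivation of~\eqref{eq:sg02} therefore does not hold together; you should simply read off the type~\rom{3} entry at $p=2$.

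The gap is that the nearest-neighbour cycle structure on $A_2$ is not immediate. A single $A_2\to A_2$ move passes through \emph{two} saddles in $C_1$ separated by an intermediate local minimum in $B_1$ (cf.\ Figure~\ref{fig:transition_BC}). In the paper's setup each $x^\star\in A_2$ is connected to \emph{four} points of the relevant $B_1$-orbit $O_y$, and each of those to two points of $A_2$. The nearest-neighbour walk with rate $q_x$ that you invoke is what emerges only \emph{after} integrating out $O_y$ via the Schur complement $L_{xx}-L_{xy}L_{yy}^{-1}L_{yx}$; the $q_y$-dependence cancels and one is left with $-4q_x\sin^2(\ell\pi/N)\one_2$. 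The paper carries this out in the representation-theoretic basis of $G=\mathfrak{D}_N\times\Z_2$: each two-dimensional irreducible representation $\pi_{\ell,\pm}$ gives a $4\times4$ block over $O_x\cup O_y$, and the Schur reduction yields the cycle eigenvalue at mode $\ell$. This framework also disposes of your ``main obstacle'' automatically: the one-dimensional representations give eigenvalues $4q_x$, strictly larger than $4\sin^2(\pi/N)q_x$, and the representations for which $A_2$ is inactive give only larger exponents, so no competing orbit (including $B_1$) produces a smaller eigenvalue. Your direct cycle argument can be made to work, but it needs this Schur step and an explicit comparison with the other representations; as written it assumes the answer.
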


The fact that the spectral gap~\eqref{eq:sg01} decays like $N^{-2}$ for large
$N$ is highly nontrivial. It is related to the fact that the symmetry group 
$\mathfrak{D}_N\times\Z_2$ admits irreducible representations of dimension $2$,
and its computation requires the full power of the theory developed
in~\cite{BD15}. 

Physically, this result means that some transitions between states in $A_2$
require a time of order $N^2 \e^{1/4\eps}$, i.e., increasing as the square of
the system size when the noise intensity $\eps$ is constant. In other words,
the motion of interfaces slows down like $N^{-2}$ when the system becomes
large.


\section{Conclusion}
\label{sec_conclusion}

Let us briefly summarise the main results obtained in this work.
\begin{itemiz}
\item 	Using the concept of metastable hierarchy, the long-term dynamics of
the system can be reduced to an effective process jumping between particle/hole
configurations. These configurations exist as long a the coupling intensity
$\gamma$ is smaller than a critical value, bounded below by a
constant independent of the system size.
\item 	The effective dynamics tends to reduce the number of interfaces, and
slows down as this number decreases. As soon as the average size of clusters
reaches $2$, the system can get trapped in configurations without isolated
sites, which are more stable than any configuration with isolated sites. 
\item 	The spectral gap is of order $N^{-2} \e^{-1/4\eps}$, which decreases as
the square of the inverse of the system size. This means that transitions
between the $N$ configurations forming the ground state $A_2$ slow down as $N$
increases.
\end{itemiz}

We emphasise that all results obtained here apply for arbitrarily large but
finite system size $N$. In fact, some quantities like the number $\theta$
defining the metastable hierarchy go to zero in the limit $N\to\infty$, so that
the orders~\eqref{eq:Bk01} and~\eqref{eq:B0_hierarchy} only make sense for
finite~$N$. We do not claim either that the error terms of order
$\eps^{1/2}\abs{\log\eps}^{3/2}$ in~\eqref{eq:sg01} and~\eqref{eq:Bk02} are
uniform in $N$, though results obtained in a similar situation in~\cite{BBM10}
indicate that they probably are.  

A different situation of interest, not considered here, arises when the
coupling intensity $\gamma$ grows like $N^2$. Then one expects that the system
converges to a mass-conserving Allen--Cahn SPDE on a bounded interval, which has
considerably fewer metastable states. Indeed, an analogous scenario was
obtained in~\cite{BFG06b}, where the unconstrained system with $\gamma\sim N^2$
was shown to have only $2$ local minima, and at most $2N$ saddles of index $1$. 
If, by contrast, one has $1 \ll \gamma \ll N^2$, a scaling argument shows that
the system should converge to an Allen--Cahn SPDE on a growing domain, which
admits more metastable states; see in particular~\cite{VdE-W_08,OWW_14} for
results in the unconstrained case, and~\cite{Mourrat_Weber_14} for a recent
convergence result in dimension $2$.

The behaviour of the constrained system for lattices of dimension larger than
$1$ remains so far an open problem. The phenomenology is expected to be
different, because the energy of clusters then depends not only on the size of
their interfaces, but also on the size of their bulk. This can result in
scenarios where the interface dynamics accelerates once a critical droplet size
has been reached, as is well known for lattice systems with standard Kawasaki
dynamics~\cite{denHollander04}. 


\appendix

\section{Proofs: Potential landscape}
\label{sec_proof_landscape}


\subsection{The uncoupled case}
\label{ssec_proof_landscape_0}

\begin{proof}[Proof of Proposition~\ref{prop_saddles}]
Consider a critical point $x^\star$ of the constrained system with triple
$(a_0,a_1,a_2)$. Recall that this means that $x^\star$ has $a_j$ coordinates
equal to $\alpha_j$, $j=0,1,2$, where the $\alpha_j$ are distinct roots of
$\xi^3-\xi-\lambda$ for some $\lambda\in(-\lambdac,\lambdac)$. By Vieta's
formula, these roots satisfy 
\begin{equation}
 \label{eq:pl0_01}
 \alpha_0 + \alpha_1 + \alpha_2 = 0\;.
\end{equation}
We always have $a_0+a_1+a_2=N$, and by convention $a_0\leqs a_1\leqs a_2$.
Note that we may assume $a_0\neq a_2$, since otherwise all $a_j$ would be
equal, and thus $N$ would be a multiple of $3$, which is excluded by
assumption. 

Combining~\eqref{eq:pl0_01} with the constraint $\sum x^\star_i = a_0\alpha_0 +
a_1\alpha_1 + a_2\alpha_2 = 0$ yields the
relation 
\begin{equation}
 \label{eq:pl0_02}
 (a_1-a_0)\alpha_1 + (a_2-a_0)\alpha_2 = 0\;.
\end{equation} 
Solving for $\alpha_2$ and using the fact that all $\alpha_j^3-\alpha_j$ are
equal, a short computation shows that 
\begin{align}
\nonumber
\alpha_0 &= \pm(a_1-a_2) R^{1/2}\;,
\\
\alpha_1 &= \pm(a_2-a_0) R^{1/2}\;,
\label{eq:alpha_j} 
\\
\alpha_2 &= \pm(a_0-a_1) R^{1/2}\;,
\nonumber
\end{align}
where
\begin{equation}
 \label{eq:R12}
 R = \frac{a_2+a_1-2a_0}{(a_1-a_0)^3+(a_2-a_0)^3}
 = \frac1{a_0^2+a_1^2+a_2^2-a_0a_1-a_0a_2-a_1a_2}\;.
\end{equation} 
We now turn to determining the signature of the Hessian at these critical points
of the potential $V_\gamma$ restricted to the hyperplane $S$. This signature
does not depend on the parametrisation of $S$, so that it is equal to the
signature of the Hessian of 
\begin{equation}
 \label{eq:pl0_03}
 \widetilde V_\gamma(x_1,\dots,x_{N-1})
 = V_\gamma(x_1,\dots,x_{N-1},-x_1-\dots-x_{N-1})\;.
\end{equation} 
Computing the Hessian of $\widetilde V_\gamma$ at $x^\star$ shows that it has
the form 
\begin{equation}
 \label{eq:pl0_04}
 H = 
 \begin{pmatrix}
  (3\alpha_0^2-1)\one_{a_0} & 0 & 0 \\
  0 & (3\alpha_1^2-1)\one_{a_1} & 0 \\
  0 & 0 & (3\alpha_2^2-1)\one_{a_2-1}
 \end{pmatrix}
 + (3\alpha_2^2-1) 
 \begin{pmatrix}
 1 & \cdots & 1 \\
 \vdots & \ddots & \vdots \\
 1 & \cdots & 1
 \end{pmatrix}\;,
\end{equation} 
where $\one_a$ denotes the identity matrix of size $a$. We now distinguish
between the following cases. 
\begin{enum}
\item 	$(a_0,a_1,a_2)=(0,0,N)$. Then $x^\star=0$, and one easily sees that $-H$
is positive definite, so that $x^\star$ is a saddle of index $N-1$. 

\item 	$a_0=0$ and $a_1\geqs1$. Using the expressions~\eqref{eq:alpha_j}, we
obtain that $3\alpha_1^2-1>0$ and $3\alpha_2^2-1$ has the same sign as
$2a_1-a_2$. Let $e_1, \dots, e_{N-1}$ denote the canonical basis vectors. Then
$\set{e_1-e_i}_{i\in\intint{2}{a_1}}$ are eigenvectors of $H$ with eigenvalue
$3\alpha_1^2-1$, and $\set{e_{a_1+1}-e_i}_{i\in\intint{a_1+2}{N-1}}$ are
eigenvectors of $H$ with eigenvalue $3\alpha_2^2-1$. 

To find the remaining two eigenvalues, let $u=\sum_{i=1}^{a_1}e_i$ and
$v=\sum_{i=a_1+1}^{N-1}e_i$. These two vectors span an invariant subspace of
$H$, in which the action of $H$ takes the form 
\begin{equation}
\label{eq:pl0_05}
\cM =
\begin{pmatrix}
3\alpha_1^2-1 + a_1(3\alpha_2^2-1) & (a_2-1)(3\alpha_2^2-1) \\
a_1(3\alpha_2^2-1) & a_2(3\alpha_2^2-1)
\end{pmatrix}\;.
\end{equation}
Computing the determinant and the trace of $\cM$, one sees that if $2a_1>a_2$,
then the two eigenvalues of $\cM$ are strictly positive, so that $x^\star$ is a
stationary point of index 0. If $2a_1<a_2$, then $\cM$
has one strictly positive and one strictly negative eigenvalue, and $x^\star$
has index $a_2-1$. 

\item 	$a_0\geqs1$. In that case one finds that $3\alpha_1^2-1>0$, while
$3\alpha_0^2-1$ has the same sign as $(2a_2-a_1-a_0)(a_0-2a_1+a_2)$ and 
$3\alpha_2^2-1$ has the same sign as $(2a_0-a_1-a_2)(a_0-2a_1+a_2)$. 
Here it is better to invert the r\^oles of $\alpha_1$ and $\alpha_2$ in the
expression for $H$. Similarly to the previous case, one finds 
$a_0-1$ eigenvectors with eigenvalue $3\alpha_0^2-1$,  
$a_2-1$ eigenvectors with eigenvalue $3\alpha_2^2-1$ and   
$a_1-2$ eigenvectors with eigenvalue $3\alpha_1^2-1$ (these eigenvectors are of
the form $e_1-e_i$, $e_{a_0+1}-e_i$ and $e_{a_0+a_2+1}-e_i$ for appropriate
ranges of $i$). 

To find the other eigenvalues, let $u=\sum_{i=1}^{a_0}e_i$, 
$v=\sum_{i=a_0+1}^{a_0+a_2}e_i$ and $w=\sum_{i=a_0+a_2+1}^{N-1}e_i$. 
These span an $H$-invariant subspace, in which the action of $H$ takes the form 
\begin{equation}
\label{eq:pl0_06}
\cM =
\begin{pmatrix}
3\alpha_0^2-1 + a_0(3\alpha_1^2-1) & a_2(3\alpha_1^2-1) 
& (a_1-1)(3\alpha_1^2-1) \\
a_0(3\alpha_1^2-1) & 3\alpha_2^2-1+a_2(3\alpha_1^2-1)
& (a_1-1)(3\alpha_1^2-1) \\
a_0(3\alpha_1^2-1) & a_2(3\alpha_1^2-1) & a_1(3\alpha_1^2-1) 
\end{pmatrix}\;.
\end{equation}
In this case, one finds $\Tr \cM>0$, and $\det \cM$ has the same sign as
$a_0-2a_1+a_2$. If $\det \cM<0$, then $\cM$ has two strictly positive and one
strictly negative eigenvalue, and $x^\star$ has index $a_0$. If $\det \cM >0$,
computing the term of degree $1$ of the characteristic polynomial of $\cM$ one
concludes that all eigenvalues of $\cM$ are strictly positive, and that
$x^\star$ has index $a_2-1$. 
\qed
\end{enum}
\renewcommand{\qed}{}
\end{proof}

\begin{proof}[Proof of Theorem~\ref{thm:transition_graph}]
Let $z^\star\in C_k$ be a $1$-saddle. Its triple can be written $(1,a-1,N-a)$
where $a=\frac N2-k+1 \in\intint{\frac N2+1-k_{\max}}{\frac N2}$. We shall
construct a path $\Gamma$, connecting $z^\star$ to a point $x^\star\in B_{k-1}$
of triple $(0,a,N-a)$, and such that the potential $V_0$ is decreasing along
$\Gamma$. An analogous construction holds for the connection from $z^\star$ to
a local minimum in $B_k$. 

In fact it will turn out to be sufficient to use a linear path. Reordering the
components if necessary, we may assume that
$x^\star=(\alpha_1,\dots,\alpha_1,\alpha_2,\dots,\alpha_2)$ with $\alpha_1$
repeated $a$ times and $\alpha_2$ repeated $N-a$ times, and 
$z^\star=(\alpha'_0,\alpha'_1,\dots,\alpha'_1,\alpha'_2,\dots,\alpha'_2)$, with
$\alpha'_1$ repeated $a-1$ times and $\alpha'_2$ repeated $N-a$ times. Note
that these points indeed satisfy the connection rules~\eqref{eq:gamma0_rules}. 
Let $\Gamma(t)=tz^\star + (1-t)x^\star$ and set $h(t)=V_0(\Gamma(t))$. Then a
direct computation shows that 
\begin{align}
\nonumber
h'(t) ={}& 
(\alpha_0'-\alpha_1) \bigbrak{((1-t)\alpha_1+t\alpha'_0)^3 
- ((1-t)\alpha_1+t\alpha_0')} \\
\nonumber
&{}+ (a-1)(\alpha_1'-\alpha_1) \bigbrak{((1-t)\alpha_1+t\alpha'_1)^3 
- ((1-t)\alpha_1+t\alpha_1')} \\
&{}+ (N-a)(\alpha_2'-\alpha_2) \bigbrak{((1-t)\alpha_2+t\alpha'_2)^3 
- ((1-t)\alpha_2+t\alpha_2')}\;.
\label{eq:pl0_07} 
\end{align}
The properties of the $\alpha_j$ and $\alpha'_j$ yield $h'(0)=h'(1)=0$.
Since $h'(t)$ is a polynomial of degree $3$, it can be written as 
\begin{equation}
 \label{eq:lp0_08}
 h'(t) = Kt(t-1)(t-\psi)
\end{equation} 
for some $K, \psi\in\R$. Computing the coefficient of $t^3$ in~\eqref{eq:pl0_07}
yields $K>0$. Thus if we manage to show that $\psi>1$, we can indeed conclude
that $h'(t)<0$ on $(0,1)$, showing that $h(t)$ is decreasing as required. The
condition $\psi>1$ is equivalent to having $h''(1)<0$. Using the
expressions~\eqref{eq:alpha_j} of the $\alpha_j$, one obtains after some algebra
that 
\begin{align}
\nonumber
h''(1) ={}&
2(\omega')^2 \bigbrak{(a-1)(9a-8N)+aN^2-a^2N} 
-4\omega\omega'N(N-a)(a-2) -\omega^2aN(N-a) \\
&{}+ 3(\omega\omega')^2(N-a)
\bigbrak{aN^3-3a^2N^2+3Na^2 - (a-1)(9a^2-9aN+4N^2)}\;,
\label{eq:lp0_09} 
\end{align}
where $\omega=(N^2-3aN+3a^2)^{-1/2}$ and 
$\omega'=(N^2-3aN+3(a^2-a+1))^{-1/2}$ stem from the terms $R^{1/2}$ 
in~\eqref{eq:alpha_j}. Using the fact that $\omega\omega'N(N-a)(a-2)>0$,
rearranging and replacing $\omega$ and $\omega'$ by their values, the condition
$h''(1)<0$ can be seen to be true if the condition $g(a)<0$ holds, where 
\begin{align}
\nonumber
g(a) ={}&
9(8N-27)a^4 + -3(56N^2-156N-81)a^3 +3N(48N^2-101N-156)a^2\\
&{}- N^2(56N^2-74N-303)a + 2N^3(4N^2-37)\;.
\label{eq:lp0_10} 
\end{align} 
To check the condition, first observe that if $N\geqs4$ then $g^{(4)}(a)>0$ for
all $a$. Next check that $g^{(3)}(\frac N2)<0$ for $N\geqs4$ to conclude that
$g^{(3)}(a)<0$ for all $a\leqs\frac N2$. Proceeding in a similar way with the
second and first derivatives of $g$, one reaches the conclusion that $g(a)$ is
decreasing for $a\leqs\frac N2$ if $N\geqs4$. It thus remains to show that $g$
is negative at the left boundary of its domain of definition. This follows by
checking the slightly stronger condition $g(\frac N3+\frac43)<0$. 
\end{proof}


\subsection{The case of small positive coupling}
\label{ssec_proof_landscape_gamma}

To prove Theorem~\ref{thm:persistence}, we proceed in two steps. First we
ignore the constraint that stationary points $x^\star$ should belong to the
hyperplane $S$, and prove that the equation 
\begin{equation}
 \label{eq:plg_01}
 \nabla V_\gamma(x) = \lambda\vone
\end{equation} 
admits exactly $3^N$ solutions for all $(\gamma,\lambda)$ in a given domain. 
Then we obtain conditions on $(\gamma,\lambda)$ guaranteeing that these
stationary points belong to $S$. 

\begin{figure}[tb]
\begin{center}
\scalebox{1}{
\begin{tikzpicture}
[>=stealth',x=18cm,y=6cm,
declare function={cubic(\x) = \x^3 - \x;
lambdamax(\x) = (sqrt(4/27)*(1-\x)*sqrt(1-\x)-\x)/(1-(6*sqrt(3)-9)*\x/4);
lambdacrit(\x) = sqrt(\x)*(1-\x);
}]

\pgfmathsetmacro{\lambdacrit}{sqrt(4/27)}


\draw[->,thick] (0.0,-\lambdacrit -0.1) -- (0.0,\lambdacrit +0.15);


\path[-,fill=blue!50,smooth,domain=0:0.25,samples=10,/pgf/fpu,
/pgf/fpu/output format=fixed] plot (\x, {lambdamax(\x)}) -- (0,0) --
(0,\lambdacrit);

\path[-,fill=blue!50,smooth,domain=0:0.25,samples=10,/pgf/fpu,
/pgf/fpu/output format=fixed] plot (\x, {-lambdamax(\x)}) -- (0,0) --
(0,-\lambdacrit);

\draw[very thick,blue,-,smooth,domain=0:0.25,samples=10,/pgf/fpu,
/pgf/fpu/output format=fixed] plot (\x, {lambdamax(\x)});

\draw[very thick,blue,-,smooth,domain=0.25:0,samples=10,/pgf/fpu,
/pgf/fpu/output format=fixed] plot (\x, {-lambdamax(\x)}) -- (0,\lambdacrit);


\newcommand*{\hgammamax}{0.08}

\path[-,fill=mycyancolor,smooth,domain=0:{\hgammamax},samples=20,/pgf/fpu,
/pgf/fpu/output format=fixed] plot (\x, {lambdacrit(\x)}) --
(\hgammamax,0) -- (0,0);

\path[-,fill=mycyancolor,smooth,domain=0:{\hgammamax},samples=20,/pgf/fpu,
/pgf/fpu/output format=fixed] plot (\x, {-lambdacrit(\x)}) -- 
(\hgammamax,0) -- (0,0);

\draw[thick,mydarkcyancolor,-,smooth,domain=0:0.03,samples=20,/pgf/fpu,
/pgf/fpu/output format=fixed] plot (\x, {lambdacrit(\x)});

\draw[thick,mydarkcyancolor,-,smooth,domain=0.03:{\hgammamax},
samples=10,/pgf/fpu, /pgf/fpu/output format=fixed] plot (\x, {lambdacrit(\x)});

\draw[thick,mydarkcyancolor,-,smooth,domain=0:0.03, samples=20,/pgf/fpu,
/pgf/fpu/output format=fixed] plot (\x, {-lambdacrit(\x)});

\draw[thick,mydarkcyancolor,-,smooth,domain=0.03:{\hgammamax},
samples=10,/pgf/fpu, /pgf/fpu/output format=fixed] plot (\x, {-lambdacrit(\x)});


\draw[->,thick] (-0.05,0.0) -- (0.35,0.0);


\draw[-,thick] (-0.01,\lambdacrit) -- (0.01,\lambdacrit);
\draw[-,thick] (-0.01,-\lambdacrit) -- (0.01,-\lambdacrit);
\draw[-,thick] (0.25,-0.02) -- (0.25,0.02);


\node[] at (0.32,-0.04) {$\gamma$}; 
\node[] at (0.03,\lambdacrit + 0.1) {$\lambda$}; 

\node[] at (0.25,-0.08) {$\tfrac14$}; 
\node[] at (-0.03,\lambdacrit) {$\lambdac$}; 
\node[] at (-0.045,-\lambdacrit) {$-\lambdac$}; 

\node[blue] at (0.16,0.23) {$D$};

\end{tikzpicture}
}
\end{center}
\vspace{-3mm}
\caption[]{The domain $D$ in the $(\gamma,\lambda)$-plane defined
in~\eqref{eq:def_D} (the boundaries of $D$ are not straight line segments,
although they look straight). For all $(\gamma,\lambda)\in D$, the equation
$\nabla V_\gamma(x)=\lambda\vone$ admits $3^N$ stationary points. The smaller
domain corresponds to the parameter values where stationary points of the
family $B_0$ can exist in the hyperplane $S$.}
\label{fig:D} 
\end{figure}
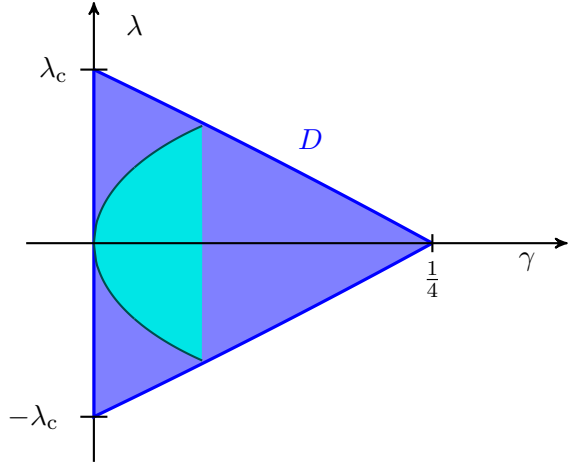

Let $\lambdac = \frac{2}{3\sqrt{3}}$ and define  
\begin{equation}
 \label{eq:def_D}
D = \bigsetsuch{(\gamma,\lambda)\in[0,\tfrac14]\times[-\lambdac,\lambdac]}
{\abs{\lambda} + \gamma \hat \alpha(\lambda) \leqs \lambdac(1-\gamma)^{3/2}}\;,
\end{equation}
where $\hat \alpha(\lambda)$ is the largest root of $x^3 - x - \abs{\lambda}$. 
The set $D$ is shown in \figref{fig:D}. A simpler sufficient condition for being
in $D$ is obtained by observing that 
\begin{equation}
 \label{eq:def_Dprime}
 D \supset D' =
\bigsetsuch{(\gamma,\lambda)\in[0,\tfrac29]\times[-\lambdac,\lambdac]}
{\abs{\lambda} \leqs \lambdac (1-\tfrac92\gamma)}\;,
\end{equation} 
owing to the fact that $\hat \alpha(\lambda)\in[1,\frac{2}{\sqrt3}]$
for $\abs{\lambda}\leqs\lambdac$.

\begin{prop}
\label{prop:horseshoe1} 
If $(\gamma,\lambda)\in D$, then \eqref{eq:plg_01} admits exactly
$3^N$ solutions, depending continuously on $\gamma$ and $\lambda$.
\end{prop}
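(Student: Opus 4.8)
The plan is to realise the solutions of $\nabla V_\gamma(x)=\lambda\vone$ (without the constraint $x\in S$) as fixed points of a family of maps indexed by symbolic itineraries, in the spirit of the symbolic‑dynamics argument of~\cite{BFG06a}. Writing out the equation componentwise and isolating the cubic term, $\nabla V_\gamma(x)=\lambda\vone$ is equivalent to
\[
 h_\gamma(x_i) = \lambda + \tfrac{\gamma}{2}\bigpar{x_{i+1}+x_{i-1}}, \qquad i\in\Z/N\Z,
 \qquad h_\gamma(\xi):=\xi^3-(1-\gamma)\xi .
\]
For $\gamma\in[0,1)$ the function $h_\gamma$ has critical points $\pm r_-$ with $r_-:=\sqrt{(1-\gamma)/3}$ and critical values $\mp c^\star$ with $c^\star:=\lambdac(1-\gamma)^{3/2}$, so for $\abs c<c^\star$ the equation $h_\gamma(\xi)=c$ has exactly three simple roots $\psi_0(c)<\psi_1(c)<\psi_2(c)$, lying in the disjoint intervals $J_0:=(-2r_-,-r_-)$, $J_1:=(-r_-,r_-)$, $J_2:=(r_-,2r_-)$, with $\psi_j$ real‑analytic in $c$. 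For a sequence $s\in\set{0,1,2}^{\Z/N\Z}$ define, on a suitable box, $(\Phi_s(x))_i := \psi_{s_i}\bigpar{\lambda+\tfrac\gamma2(x_{i+1}+x_{i-1})}$; then stationary points are precisely the fixed points of the $\Phi_s$ over all $3^N$ itineraries, and fixed points of distinct $\Phi_s$ lie in disjoint boxes.

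\textbf{A priori bound.} First I would prove that any solution satisfies $\norm{x}_\infty\leqs\hat\alpha(\lambda)$. Indeed, at a coordinate $i^\star$ of maximal modulus one has $\abs{h_\gamma(x_{i^\star})}\leqs\abs\lambda+\gamma\abs{x_{i^\star}}$; if $\abs{x_{i^\star}}\leqs1$ the bound is trivial since $\hat\alpha(\lambda)\geqs1$, while if $\abs{x_{i^\star}}>1>\sqrt{1-\gamma}$ then $\abs{h_\gamma(x_{i^\star})}=\abs{x_{i^\star}}^3-(1-\gamma)\abs{x_{i^\star}}$, whence $\abs{x_{i^\star}}^3-\abs{x_{i^\star}}\leqs\abs\lambda$, i.e.\ $\abs{x_{i^\star}}\leqs\hat\alpha(\lambda)$. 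Combined with the defining inequality $\abs\lambda+\gamma\hat\alpha(\lambda)\leqs c^\star$ of $D$, this shows that on the box $K:=[-\hat\alpha(\lambda),\hat\alpha(\lambda)]^N$ the argument $\lambda+\tfrac\gamma2(x_{i+1}+x_{i-1})$ of $\psi_{s_i}$ always lies in $[-c^\star,c^\star]$, so each $\Phi_s$ is well defined on $K$; moreover $h_\gamma(\pm\hat\alpha(\lambda)) = \pm(\abs\lambda+\gamma\hat\alpha(\lambda))$, so $\psi_2\bigpar{\abs\lambda+\gamma\hat\alpha(\lambda)}=\hat\alpha(\lambda)$ and $\psi_0\bigpar{-(\abs\lambda+\gamma\hat\alpha(\lambda))}=-\hat\alpha(\lambda)$, which gives $\Phi_s(K)\subseteq K$. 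Conversely, by the a priori bound every solution has each coordinate in $J_0\cup J_1\cup J_2$, hence carries a well‑defined itinerary $s$.

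\textbf{Fixed point, count, continuity.} The core step is that each $\Phi_s$ has a unique fixed point in $K$. Since $\psi_j'=1/(h_\gamma'\circ\psi_j)$ and $(\Phi_s(x)-\Phi_s(y))_i$ is $\psi_{s_i}$ evaluated at two points differing by $\tfrac\gamma2((x_{i+1}-y_{i+1})+(x_{i-1}-y_{i-1}))$, one obtains $\bignorm{\Phi_s(x)-\Phi_s(y)}_\infty\leqs\gamma\,\bigpar{\sup_i\abs{\psi_{s_i}'}}\,\bignorm{x-y}_\infty$, so it suffices to bound $\abs{h_\gamma'}=\abs{3\xi^2-(1-\gamma)}$ from below on the part of each $J_j$ actually reached. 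For the outer branches ($s_i\in\set{0,2}$, where $h_\gamma'>0$) this is straightforward. The whole point of the shape of $D$ — the factor $(1-\gamma)^{3/2}$ and the appearance of $\hat\alpha(\lambda)$ — is that the itinerary confines the arguments of the $\psi_{s_i}$ to $[-(\abs\lambda+\gamma\hat\alpha(\lambda)),\abs\lambda+\gamma\hat\alpha(\lambda)]$, which is kept away from $\pm c^\star$, hence the reached part of $J_1$ away from the fold points $\pm r_-$; one checks the contraction constant is then $<1$. (Equivalently, one phrases this as uniform hyperbolicity of the twist map $T_{\gamma,\lambda}(u,v)=\bigpar{v,\,2v-u+\tfrac2\gamma(v^3-v-\lambda)}$ on an invariant region, whose period‑$N$ orbits are the stationary points, as in~\cite{BFG06a}.) Banach's theorem then gives a unique $x^{(s)}\in K$ for each of the $3^N$ itineraries; these are pairwise distinct (disjoint boxes) and exhaust the solution set by the a priori bound, so there are exactly $3^N$ solutions. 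Continuity in $(\gamma,\lambda)$ follows from the uniform‑contraction principle: $F(x;\gamma,\lambda):=x-\Phi_s^{\gamma,\lambda}(x)$ has $\partial_xF=\id-\partial_x\Phi_s$ invertible, so the implicit function theorem yields $x^{(s)}$ as a continuous (indeed $C^1$) function of $(\gamma,\lambda)$.

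\textbf{Main obstacle.} The delicate point is the uniform contraction estimate for the middle branch $\psi_1$ near the boundary of $D$ (equivalently near $\abs\lambda=\lambdac$), where $\psi_0$ and $\psi_1$ approach the fold point $-r_-$ and $h_\gamma'$ degenerates; one must verify that the sub‑box forced by the itinerary shrinks no faster than $h_\gamma'$ vanishes, which is exactly encoded by the inequality defining $D$, and this is where the adaptation of the symbolic‑dynamics/cone‑field argument of~\cite{BFG06a} does the work (a plain sup‑norm contraction is not enough close to $\partial D$, and one uses either a weighted norm or the twist‑map picture). A minor additional point is the behaviour on $\partial D$ itself: there either one argues by continuity from $\Int(D)$, or one observes that the only solutions with a coordinate exactly at $\pm r_-$ would be the constant configurations $\pm\hat\alpha(\lambda)\vone$, corresponding to the itineraries $(0,\dots,0)$ and $(2,\dots,2)$, which are still among the $3^N$ counted.
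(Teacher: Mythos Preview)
Your approach---recasting the stationary-point equation as a fixed-point problem for the branch maps $\Phi_s$ on $\R^N$ and invoking Banach's theorem---is a natural alternative to the paper's, but it has a genuine gap at precisely the step you flag. The paper instead works with the two-dimensional twist map $T(x,y)=(g(x)-y,x)$, $g(x)=\tfrac{2}{\gamma}\bigbrak{x^3-(1-\gamma)x-\lambda}$, constructs three ``vertical'' strips $\cV_-,\cV_0,\cV_+$ inside $[\alphamin,\alphamax]^2$ that $T$ maps onto three ``horizontal'' strips $\cH_\sigma=\Pi\cV_\sigma$, and then invokes a Keener-type horseshoe argument: the nested intersections $\bigcap_k T^k(\cV_{\omega_k})$ are nonempty compacts shrinking to singletons, so the invariant set is conjugate to the full shift on three symbols, and its $3^N$ period-$N$ orbits are exactly the stationary points. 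This is the ``twist-map picture'' you mention only as a fallback; the paper takes it as the primary route because it is \emph{topological} rather than metric and therefore survives up to the boundary of $D$.

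The problem with your contraction argument is that it does not actually go through on a neighbourhood of $\partial D$, and your claim that $D$ ``keeps the arguments away from $\pm c^\star$'' is false: the defining inequality of $D$ is $\abs\lambda+\gamma\hat\alpha(\lambda)\leqs c^\star$, with \emph{equality} on $\partial D$, so the reachable arguments of the $\psi_{s_i}$ fill all of $[-c^\star,c^\star]$ there. Since $\psi_j'(c)=1/h_\gamma'(\psi_j(c))$ and $h_\gamma'(\pm r_-)=0$, one has $\abs{\psi_j'(c)}\to\infty$ as $c\to\mp c^\star$ for \emph{every} branch $j$ (so your remark that the outer branches are ``straightforward'' is also too optimistic: $\psi_2(c)\to r_-$ as $c\to -c^\star$). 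In $\Int D$ near the boundary, the gap $c^\star-(\abs\lambda+\gamma\hat\alpha)$ is small while $\gamma$ remains bounded away from zero, giving a sup-norm Lipschitz constant of order $\gamma/\sqrt{c^\star-(\abs\lambda+\gamma\hat\alpha)}\to\infty$, so $\Phi_s$ is not a contraction there. A weighted norm does not obviously salvage this, and carrying out the iteration carefully essentially reconstructs the cone-field/hyperbolicity estimates of the horseshoe picture. Your a priori bound $\norm{x}_\infty\leqs\hat\alpha(\lambda)$ is correct and is implicitly the box the paper works in, but the heart of the argument needs the horseshoe construction rather than a direct Banach contraction.
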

\begin{proof}
The proof, in the spirit of~\cite{Keener87}, is based on the construction of a
horseshoe-type map admitting an invariant Cantor set on which the dynamics is
conjugated to the full shift on $3$ symbols. 
First note that we may assume $0 < \gamma \leqs\frac14$, the case $\gamma=0$
having already been dealt with. Let $f_\lambda(x)=x-x^3+\lambda$ and consider
the map $T:\R^2\to\R^2$ given by 
\begin{equation}
 \label{eq:plg_03}
 T(x,y) = \Bigpar{2x-y-\frac2\gamma f_\lambda(x), x}\;.
\end{equation} 
This is an invertible map, with inverse $T^{-1}=\Pi \circ T \circ \Pi$ where
$\Pi$ is the involution given by $\Pi(x,y)=(y,x)$. Furthermore, the relation
$T(x_n,x_{n-1})=(x_{n+1},x_n)$ is equivalent to 
\begin{equation}
 \label{eq:plg_04}
 x_n^3 - x_n - \frac\gamma2 \bigpar{x_{n+1}-2x_n+x_{n-1}} = \lambda\;.
\end{equation} 
This shows that fixed points of $T^N$ are in one-to-one
correspondence with solutions of~\eqref{eq:plg_01}. 
Our aim is thus to show that when $(\gamma,\lambda)\in D$, the map $T$ has
exactly $3^N$ periodic orbits of (not necessarily minimal) period $N$. To this
end, we construct some subsets of $\R^2$ which behave nicely under the map
$T$. 

\begin{figure}[tb]
\begin{center}
\scalebox{1}{
\begin{tikzpicture}
[>=stealth',x=2.5cm,y=2.5cm,
declare function={cubic(\x) = \x^3 - \x;
root(\la,\p,\k) = 
cos((rad(acos(\la*sqrt(27/\p^3)/2))+2*\k*pi)/3 r)*2*sqrt(\p/3);
}]

\newcommand*{\hlambda}{0.08}
\newcommand*{\hgamma}{0.19}
\pgfmathsetmacro{\xmax}{root(\hlambda,1,0)}
\pgfmathsetmacro{\xmin}{root(\hlambda,1,1)}

\draw[semithick] (\xmin,\xmin) -- (\xmin,\xmax) -- (\xmax,\xmax) --
(\xmax,\xmin) -- (\xmin,\xmin);

\pgfmathsetmacro{\hzz}{sqrt((1-\hgamma)/3)}

\pgfmathsetmacro{\hxaa}{\xmin}
\pgfmathsetmacro{\hxab}{root(\hlambda+\hgamma*(\xmin + \xmax)/2,1-\hgamma,1)}
\pgfmathsetmacro{\hxac}{root(\hlambda+\hgamma*\xmax,1-\hgamma,1)}

\path[-,fill=blue!50,smooth,domain={\hxaa}:{\hxab},samples=10,/pgf/fpu,
/pgf/fpu/output format=fixed] plot (\x, {2*\x - \xmin +
2*(cubic(\x)-\hlambda)/\hgamma}) -- (\hxab,\xmin) -- (\hxaa,\xmin);

\path[-,fill=blue!50,smooth,domain={\hxac}:{\hxab},samples=10,/pgf/fpu,
/pgf/fpu/output format=fixed] plot (\x, {2*\x - \xmax +
2*(cubic(\x)-\hlambda)/\hgamma}) -- (\hxaa,\xmin) -- (\hxab,\xmax) --
(\hxac,\xmax);

\draw[very thick,blue,-,smooth,domain={\hxaa}:{\hxab},samples=8,/pgf/fpu,
/pgf/fpu/output format=fixed] plot (\x, {2*\x - \xmin +
2*(cubic(\x)-\hlambda)/\hgamma}) -- (\hxab,\xmax) -- (\hxac,\xmax);

\draw[very thick,blue,-,smooth,domain={\hxac}:{\hxab},samples=8,/pgf/fpu,
/pgf/fpu/output format=fixed] plot (\x, {2*\x - \xmax +
2*(cubic(\x)-\hlambda)/\hgamma}) -- (\hxab,\xmin) -- (\hxaa,\xmin);

\pgfmathsetmacro{\hxba}{root(\hlambda+\hgamma*\xmin,1-\hgamma,2)}
\pgfmathsetmacro{\hxbb}{root(\hlambda+\hgamma*(\xmax+\xmin)/2,1-\hgamma,2)}
\pgfmathsetmacro{\hxbc}{root(\hlambda+\hgamma*\xmax,1-\hgamma,2)}

\path[-,fill=blue!50,smooth,domain={\hxba}:{\hxbb},samples=10,/pgf/fpu,
/pgf/fpu/output format=fixed] plot (\x, {2*\x - \xmin +
2*(cubic(\x)-\hlambda)/\hgamma}) -- (\hxbb,\xmin) -- (\hxba,\xmin);

\path[-,fill=blue!50,smooth,domain={\hxbc}:{\hxbb},samples=10,/pgf/fpu,
/pgf/fpu/output format=fixed] plot (\x, {2*\x - \xmax +
2*(cubic(\x)-\hlambda)/\hgamma}) -- (\hxba,\xmin) -- (\hxbb,\xmax) --
(\hxbc,\xmax);

\draw[very thick,blue,-,smooth,domain={\hxba}:{\hxbb},samples=5,/pgf/fpu,
/pgf/fpu/output format=fixed] plot (\x, {2*\x - \xmin +
2*(cubic(\x)-\hlambda)/\hgamma}) -- (\hxbb,\xmax) -- (\hxbc,\xmax);

\draw[very thick,blue,-,smooth,domain={\hxbc}:{\hxbb},samples=10,/pgf/fpu,
/pgf/fpu/output format=fixed] plot (\x, {2*\x - \xmax +
2*(cubic(\x)-\hlambda)/\hgamma}) -- (\hxbb,\xmin) -- (\hxba,\xmin);

\pgfmathsetmacro{\hxca}{root(\hlambda+\hgamma*\xmin,1-\hgamma,0)}
\pgfmathsetmacro{\hxcb}{root(\hlambda+\hgamma*(\xmax +\xmin)/2,1-\hgamma,0)}
\pgfmathsetmacro{\hxcc}{\xmax}

\path[-,fill=blue!50,smooth,domain={\hxca}:{\hxcb},samples=10,/pgf/fpu,
/pgf/fpu/output format=fixed] plot (\x, {2*\x - \xmin +
2*(cubic(\x)-\hlambda)/\hgamma}) -- (\hxcb,\xmin) -- (\hxca,\xmin);

\path[-,fill=blue!50,smooth,domain={\hxcc}:{\hxcb},samples=10,/pgf/fpu,
/pgf/fpu/output format=fixed] plot (\x, {2*\x - \xmax +
2*(cubic(\x)-\hlambda)/\hgamma}) -- (\hxca,\xmin) -- (\hxcb,\xmax) --
(\hxcc,\xmax);

\draw[very thick,blue,-,smooth,domain={\hxca}:{\hxcb},samples=10,/pgf/fpu,
/pgf/fpu/output format=fixed] plot (\x, {2*\x - \xmin +
2*(cubic(\x)-\hlambda)/\hgamma}) -- (\hxcb,\xmax) -- (\hxcc,\xmax);

\draw[very thick,blue,-,smooth,domain={\hxcc}:{\hxcb},samples=10,/pgf/fpu,
/pgf/fpu/output format=fixed] plot (\x, {2*\x - \xmax +
2*(cubic(\x)-\hlambda)/\hgamma}) -- (\hxcb,\xmin) -- (\hxca,\xmin);


\draw[->,thick] (-1.2,0) -- (1.3,0);
\draw[->,thick] (0,-1.2) -- (0,1.3);


\draw[thick] (-\hzz,-0.03) -- (-\hzz,0.03);
\draw[thick] (\hzz,-0.03) -- (\hzz,0.03);


\node[blue] at (\hxab +0.12,1.15) {$\cV_-$}; 
\node[blue] at (\hxbb -0.17,1.15) {$\cV_0$}; 
\node[blue] at (\hxcb +0.08,1.15) {$\cV_+$}; 

\node[] at (-\hzz,-0.12) {$-z_0$};
\node[] at (\hzz,-0.12) {$z_0$};

\node[] at (1.15,-0.12) {$x$};
\node[] at (0.12,1.15) {$y$};

\end{tikzpicture}
\hspace{5mm}
\begin{tikzpicture}
[>=stealth',x=2.5cm,y=2.5cm,
declare function={cubic(\x) = \x^3 - \x;
root(\la,\p,\k) = 
cos((rad(acos(\la*sqrt(27/\p^3)/2))+2*\k*pi)/3 r)*2*sqrt(\p/3);
}]

\newcommand*{\hlambda}{0.08}
\newcommand*{\hgamma}{0.19}
\pgfmathsetmacro{\xmax}{root(\hlambda,1,0)}
\pgfmathsetmacro{\xmin}{root(\hlambda,1,1)}

\draw[semithick] (\xmin,\xmin) -- (\xmin,\xmax) -- (\xmax,\xmax) --
(\xmax,\xmin) -- (\xmin,\xmin);

\pgfmathsetmacro{\hzz}{sqrt((1-\hgamma)/3)}

\pgfmathsetmacro{\hxaa}{\xmin}
\pgfmathsetmacro{\hxab}{root(\hlambda+\hgamma*(\xmin + \xmax)/2,1-\hgamma,1)}
\pgfmathsetmacro{\hxac}{root(\hlambda+\hgamma*\xmax,1-\hgamma,1)}

\path[-,fill=green!70!black,smooth,domain={\hxaa}:{\hxab},samples=10,/pgf/fpu,
/pgf/fpu/output format=fixed] plot ({2*\x - \xmin +
2*(cubic(\x)-\hlambda)/\hgamma}, \x) -- (\hxab,\xmin) -- (\hxaa,\xmin);

\path[-,fill=green!70!black,smooth,domain={\hxac}:{\hxab},samples=10,/pgf/fpu,
/pgf/fpu/output format=fixed] plot ({2*\x - \xmax +
2*(cubic(\x)-\hlambda)/\hgamma}, \x) -- (\xmin,\hxaa) -- (\xmax,\hxab) --
(\xmax,\hxac);

\draw[very
thick,green!30!black,-,smooth,domain={\hxaa}:{\hxab},samples=10,/pgf/fpu,
/pgf/fpu/output format=fixed] plot ({2*\x - \xmin +
2*(cubic(\x)-\hlambda)/\hgamma}, \x) -- (\xmax,\hxab) -- (\xmax,\hxac);

\draw[very
thick,green!30!black,-,smooth,domain={\hxac}:{\hxab},samples=8,/pgf/fpu,
/pgf/fpu/output format=fixed] plot ({2*\x - \xmax +
2*(cubic(\x)-\hlambda)/\hgamma}, \x) -- (\xmin,\hxab) -- (\xmin,\hxaa);

\pgfmathsetmacro{\hxba}{root(\hlambda+\hgamma*\xmin,1-\hgamma,2)}
\pgfmathsetmacro{\hxbb}{root(\hlambda+\hgamma*(\xmax+\xmin)/2,1-\hgamma,2)}
\pgfmathsetmacro{\hxbc}{root(\hlambda+\hgamma*\xmax,1-\hgamma,2)}

\path[-,fill=green!70!black,smooth,domain={\hxba}:{\hxbb},samples=10,/pgf/fpu,
/pgf/fpu/output format=fixed] plot ({2*\x - \xmin +
2*(cubic(\x)-\hlambda)/\hgamma}, \x) -- (\xmin,\hxbb) -- (\xmin,\hxba);

\path[-,fill=green!70!black,smooth,domain={\hxbc}:{\hxbb},samples=10,/pgf/fpu,
/pgf/fpu/output format=fixed] plot ({2*\x - \xmax +
2*(cubic(\x)-\hlambda)/\hgamma}, \x) -- (\xmin,\hxba) -- (\xmax,\hxbb) --
(\xmax,\hxbc);

\draw[very
thick,green!30!black,-,smooth,domain={\hxba}:{\hxbb},samples=5,/pgf/fpu,
/pgf/fpu/output format=fixed] plot ({2*\x - \xmin +
2*(cubic(\x)-\hlambda)/\hgamma}, \x) -- (\xmax,\hxbb) -- (\xmax,\hxbc);

\draw[very
thick,green!30!black,-,smooth,domain={\hxbc}:{\hxbb},samples=10,/pgf/fpu,
/pgf/fpu/output format=fixed] plot ({2*\x - \xmax +
2*(cubic(\x)-\hlambda)/\hgamma}, \x) -- (\xmin,\hxbb) -- (\xmin,\hxba);

\pgfmathsetmacro{\hxca}{root(\hlambda+\hgamma*\xmin,1-\hgamma,0)}
\pgfmathsetmacro{\hxcb}{root(\hlambda+\hgamma*(\xmax +\xmin)/2,1-\hgamma,0)}
\pgfmathsetmacro{\hxcc}{\xmax}

\path[-,fill=green!70!black,smooth,domain={\hxca}:{\hxcb},samples=10,/pgf/fpu,
/pgf/fpu/output format=fixed] plot ({2*\x - \xmin +
2*(cubic(\x)-\hlambda)/\hgamma}, \x) -- (\xmin,\hxcb) -- (\xmin,\hxca);

\path[-,fill=green!70!black,smooth,domain={\hxcc}:{\hxcb},samples=10,/pgf/fpu,
/pgf/fpu/output format=fixed] plot ({2*\x - \xmax +
2*(cubic(\x)-\hlambda)/\hgamma}, \x) -- (\xmin,\hxca) -- (\xmax,\hxcb) --
(\xmax,\hxcc);

\draw[very
thick,green!30!black,-,smooth,domain={\hxca}:{\hxcb},samples=10,/pgf/fpu,
/pgf/fpu/output format=fixed] plot ({2*\x - \xmin +
2*(cubic(\x)-\hlambda)/\hgamma}, \x) -- (\xmax,\hxcb) -- (\xmax,\hxcc);

\draw[very
thick,green!30!black,-,smooth,domain={\hxcc}:{\hxcb},samples=10,/pgf/fpu,
/pgf/fpu/output format=fixed] plot ({2*\x - \xmax +
2*(cubic(\x)-\hlambda)/\hgamma}, \x) -- (\xmin,\hxcb) -- (\xmin,\hxca);


\draw[->,thick] (-1.2,0) -- (1.3,0);
\draw[->,thick] (0,-1.2) -- (0,1.3);


\draw[thick] (-0.03,-\hzz) -- (0.03,-\hzz);
\draw[thick] (-0.03,\hzz) -- (0.03,\hzz);


\node[green!30!black] at (-1.12,-\hxcb +0.03) {$\cH_-$}; 
\node[green!30!black] at (-1.12,-\hxbb ) {$\cH_0$}; 
\node[green!30!black] at (-1.12,-\hxab +0.03) {$\cH_+$}; 

\node[] at (-0.2,-\hzz) {$-z_0$};
\node[] at (-0.14,\hzz) {$z_0$};

\node[] at (1.15,-0.12) {$x$};
\node[] at (0.12,1.15) {$y$};

\end{tikzpicture}
}
\end{center}
\vspace{-3mm}
\caption[]{The sets $\cV_\sigma$ and $\cH_\sigma$ constructed in the proof of
Proposition~\ref{prop:horseshoe1}. The square is the set
$[\alphamin,\alphamax]^2$. The $\cV_\sigma$ are bounded below by
$g(x)-\alphamax$
and above by $g(x)-\alphamin$. Each $\cV_\sigma$ is mapped by $T$ to the
corresponding $\cH_\sigma$. Iterating $T$ forward and backward in time produces
an invariant Cantor set contained in the intersections of the $\cV_\sigma$ and
$\cH_\sigma$.}
\label{fig:horseshoe} 
\end{figure}
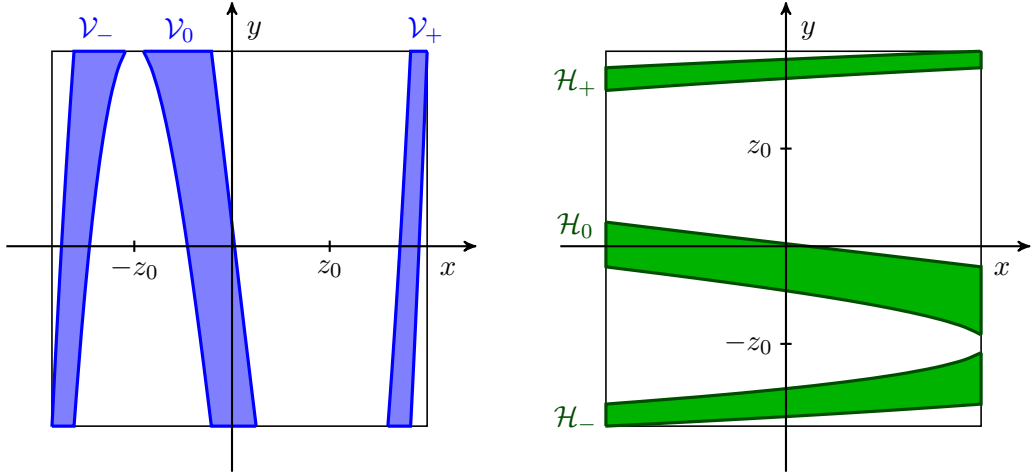

We can write $T(x,y)=(g(x)-y,x)$ where $g$ is the function 
\begin{equation}
 \label{eq:plg_05}
 g(x) = 2x - \frac2\gamma f_\lambda(x)
 = \frac2\gamma \bigbrak{x^3 - (1-\gamma)x - \lambda}\;.
\end{equation} 
It has a local minimum at $z_0=\sqrt{(1-\gamma)/3}$ and a local maximum at
$-z_0$. Furthermore, it is strictly increasing on $(-\infty,-z_0)$ and
$(z_0,\infty)$ and strictly decreasing on $(-z_0,z_0)$. 
Let $\alphamin$ and $\alphamax$ be the smallest and largest roots
of $x^3-x-\lambda$. Note that $\max\set{\alphamax,-\alphamin} = \hat \alpha$ and
that 
\begin{equation}
 \label{eq:plg_05c}
 g(\alphamax) = 2\alphamax\;, \qquad
  g(\alphamin) = 2\alphamin\;.
\end{equation}
Furthermore one can check that 
\begin{equation}
 \label{eq:plg_06}
 (\gamma,\lambda) \in D 
 \quad\Rightarrow\quad
 g(-z_0) \geqs 2\alphamax \quad\text{and}\quad g(z_0) \leqs 2\alphamin\;.
\end{equation} 
Denote by $g_-^{-1}$ the inverse of $g$ with range $[\alphamin,-z_0]$ and
introduce the \lq\lq vertical\rq\rq\ strip  
\begin{equation}
 \label{eq:plg_07}
 \cV_- = 
 \bigsetsuch{(x,y)}{g_-^{-1}(y+\alphamin)\leqs x\leqs g_-^{-1}(y+\alphamax), 
 \alphamin\leqs y\leqs \alphamax}
\end{equation} 
(see \figref{fig:horseshoe}). 
Then we see that $T$ maps $\cV_-$ to the \lq\lq horizontal\rq\rq\ strip 
$\cH_- = \Pi \cV_-$. Similarly, if $g_0^{-1}$ denotes the inverse of $g$ with
range $[-z_0,z_0]$, then the strip 
\begin{equation}
 \label{eq:plg_08}
 \cV_0 = 
 \bigsetsuch{(x,y)}{g_0^{-1}(y+\alphamax)\leqs x\leqs g_0^{-1}(y+\alphamin), 
 \alphamin\leqs y\leqs \alphamax}
\end{equation} 
is mapped by $T$ to $\cH_0 = \Pi \cV_0$. In the same way, one can construct a
strip $\cV_+$ defined via the inverse $g_+^{-1}$ of $g$ with range
$[z_0,\alphamax]$, which is mapped to $\cH_+ = \Pi\cV_+$. The
property~\eqref{eq:plg_06} ensures that the strips $\cV_\sigma$ have disjoint
interiors, and the same holds for the $\cH_\sigma$. 

Consider now any finite word $\omega=(\omega_{-n},\dots,\omega_{n+1})
\in\set{-,0,+}^{2(n+1)}$, and associate with it the set 
\begin{equation}
 \label{eq:plg_09}
 I_\omega = \bigcap_{k=-n}^{n+1} T^k(\cV_{\omega_k})\;.
\end{equation} 
The above properties of the strips imply that all $I_\omega$ are non-empty, and
have pairwise disjoint interior. In fact, the union of all $I_\omega$ converges
as $n\to\infty$ to a Cantor set invariant under $T$. By a standard
argument~\cite{Keener87}, for every doubly infinite sequence
$\omega\in\set{-,0,+}^\Z$, there exists an $I_\omega\in[\alphamin,\alphamax]^2$
whose orbit visits $\cV_{\omega_n}$ at time $-n$ and $\cH_{\omega_n}$ at time
$n+1$ for each $n\in\N_0$. In particular, for any of the $3^N$ possible
$N$-periodic sequences $\omega$, we obtain exactly one $N$-periodic orbit of
$T$, which corresponds to one solution of~\eqref{eq:plg_01}. It depends
continuously on the parameters $\gamma$ and $\lambda$, because the $I_\omega$
depend continuously on them.  
\end{proof}

Let us point out that the above result is consistent with the previously
obtained properties of the system for $\gamma=0$. Indeed, as $\gamma\to0$, the
function $g$ defined in~\eqref{eq:plg_05} becomes singular, switching between
$-\infty$ and $+\infty$ at the roots of $x^3-x-\lambda$, which are precisely the
$\alpha_j$ introduced in Section~\ref{ssec_proof_landscape_0}. As a consequence,
the invariant Cantor set collapses on $\set{\alpha_0,\alpha_1,\alpha_2}^2$, and
the stationary points are all $N$-tuples with these coordinates (there are
indeed $3^N$ of them). 

In order to deal with the constraint $x^\star\in S$, we will need some control
on the size of the sets $\cV_\sigma \cap \cH_{\sigma'}$. The following lemma
provides upper bounds on the widths of the $\cV_\sigma$ (and thus also on the
heights of the $\cH_{\sigma'}$) which will be sufficient for this purpose. 

\begin{lemma}
\label{lem:horseshoe}
Assume that $(\gamma,\lambda)\in D$, and denote by $\alphamin < \alphac <
\alphamax$ the three roots of $x^3-x-\lambda$. Then
\begin{align}
\nonumber
\cV_- &\subset 
\bigbrak{\alphamin,\alphamin + \sqrt{\gamma}\,}
\times \bigbrak{\alphamin,\alphamax}\;, \\
\nonumber
\cV_0 &\subset \bigbrak{\alphac - \sqrt{\gamma}, \alphac + \sqrt{\gamma}\,}
\times \bigbrak{\alphamin,\alphamax}\;,\\
\cV_+ &\subset \bigbrak{\alphamax - \sqrt{\gamma},\alphamax}
\times \bigbrak{\alphamin,\alphamax}\;.
 \label{eq:plg_20}
\end{align} 
\end{lemma}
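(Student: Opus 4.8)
The plan is to prove the inclusion for $\cV_-$ in full and obtain the other two strips from completely analogous computations (for $\cV_+$ one may alternatively invoke the symmetry $(x,\lambda)\mapsto(-x,-\lambda)$, which conjugates $T$ to itself and interchanges $\cV_-$ and $\cV_+$, and reflects $\cV_0$ to itself; we assume $0<\gamma\leqs\frac14$ and $\abs\lambda<\lambdac$, the case $\gamma=0$ being vacuous). By construction $\cV_-$ lies in the strip $[\alphamin,-z_0]\times[\alphamin,\alphamax]$, on which $g$ is strictly increasing; applying $g$ to the two defining inequalities $g_-^{-1}(y+\alphamin)\leqs x\leqs g_-^{-1}(y+\alphamax)$ gives $y+\alphamin\leqs g(x)\leqs y+\alphamax$, hence $g(x)\leqs 2\alphamax$. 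The only thing left to prove is therefore that $x\leqs\alphamin+\sqrt\gamma$ for every such $(x,y)$.

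For this I would use two ingredients. The first is the algebraic identity, obtained from $g(x)-2x=\frac2\gamma(x-\alphamin)(x-\alphac)(x-\alphamax)$,
\[ g(x)-2\alphamax = 2(x-\alphamax)\Bigbrak{\tfrac1\gamma(x-\alphamin)(x-\alphac)+1}\;. \]
Since $x\leqs-z_0<\alphamax$ the factor $x-\alphamax$ is negative, so $g(x)\leqs2\alphamax$ forces $\frac1\gamma(x-\alphamin)(x-\alphac)+1\geqs0$; as $x-\alphamin\geqs0$ and (see below) $x\leqs-z_0\leqs\alphac$, this reads $(x-\alphamin)(\alphac-x)\leqs\gamma$. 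The second ingredient is the inequality $\alphamax\leqs2z_0$ for all $(\gamma,\lambda)\in D$. For $\lambda\geqs0$ one has $\hat\alpha(\lambda)=\alphamax$, so the defining inequality of $D$ is $\lambda+\gamma\alphamax\leqs\lambdac(1-\gamma)^{3/2}$; substituting $\lambda=\alphamax^3-\alphamax$, the identity $\lambdac(1-\gamma)^{3/2}=2z_0^3$ and $1-\gamma=3z_0^2$ turns this into $\alphamax^3-3z_0^2\alphamax-2z_0^3\leqs0$, which factors as $(\alphamax-2z_0)(\alphamax+z_0)^2\leqs0$, whence $\alphamax\leqs2z_0$. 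For $\lambda<0$ one uses instead the box constraint $\gamma\leqs\frac14$, which gives $z_0\geqs\frac12\geqs\frac12\alphamax$. In either case $-z_0\leqs-\frac12\alphamax=\frac12(\alphamin+\alphac)$ by Vieta ($\alphamin+\alphac+\alphamax=0$), and combined with $\alphamin\leqs\alphac$ this also yields the positional fact $-z_0\leqs\alphac$ used above (as well as $\alphamin<-z_0<z_0<\alphamax$, needed for the three branches $g_\sigma^{-1}$ to be well defined, which follows from $\abs\lambda<\lambdac$ and $\gamma\leqs\frac14$).

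Putting the pieces together: from $\alphamax\leqs2z_0$ we get $x\leqs-z_0\leqs\frac12(\alphamin+\alphac)$, i.e. $x-\alphamin\leqs\alphac-x$; multiplying by $x-\alphamin\geqs0$ and using the first ingredient, $(x-\alphamin)^2\leqs(x-\alphamin)(\alphac-x)\leqs\gamma$, which is precisely $x\leqs\alphamin+\sqrt\gamma$. For $\cV_0$ the same computation is carried out on $[-z_0,z_0]$, where $g$ is strictly decreasing, so that a point $(x,y)\in\cV_0$ satisfies $2\alphamin\leqs g(x)\leqs2\alphamax$: if $x\geqs\alphac$ the lower bound $x\geqs\alphac-\sqrt\gamma$ is trivial, and otherwise $g(x)\leqs2\alphamax$ again gives $(x-\alphamin)(\alphac-x)\leqs\gamma$ while $x\geqs-z_0\geqs\frac12(\alphamin+\alphac)$ gives $\alphac-x\leqs x-\alphamin$, so $(\alphac-x)^2\leqs\gamma$; the upper bound $x\leqs\alphac+\sqrt\gamma$, and the whole $\cV_+$ inclusion, then follow by the reflection symmetry $(x,\lambda)\mapsto(-x,-\lambda)$.

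The genuine crux is the estimate $\alphamax\leqs2z_0$: it amounts to recognising that the curved part of $\partial D$ is \emph{exactly} the locus $\{\alphamax=2z_0\}$, which is what the factorisation $\alphamax^3-3z_0^2\alphamax-2z_0^3=(\alphamax-2z_0)(\alphamax+z_0)^2$ makes transparent — in other words, the domain $D$ has been tailored precisely so that this inequality, and hence the width bound $\sqrt\gamma$ in~\eqref{eq:plg_20}, is sharp (it is attained at $(\gamma,\lambda)=(\tfrac14,0)$). Everything else is routine bookkeeping: tracking the positions of $\pm z_0$, $\alphac$ and $\alphamin,\alphamax$ relative to one another, the signs of the linear factors, and the (harmless) degenerate cases $\gamma=0$ and $\abs\lambda=\lambdac$.
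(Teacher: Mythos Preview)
Your proof is correct and reaches the same conclusion as the paper, but by a genuinely different and somewhat cleaner algebraic route. The paper argues by looking at the top-right corner $x_1$ of $\cV_-$, subtracting the cubic relations $x_1^3-(1-\gamma)x_1-\lambda=\gamma\alphamax$ and $\alphamin^3-(1-\gamma)\alphamin-\lambda=\gamma\alphamin$, and introducing an auxiliary quadratic $h_1(\Delta)=3(\alphamin^2-z_0^2)+3\alphamin\Delta+\Delta^2$ so that $\Delta_1 h_1(\Delta_1)=\gamma(\alphamax-\alphamin)$; it then shows $h_1(\Delta)/\Delta$ is decreasing and bounds it below by $2z_0-\alphamin$, yielding $\Delta_1^2\leqs\gamma(\alphamax-\alphamin)/(2z_0-\alphamin)$, which becomes $\gamma$ once $\alphamax\leqs2z_0$. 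You bypass this entirely with the factorisation $g(x)-2\alphamax=2(x-\alphamax)\bigl[\tfrac1\gamma(x-\alphamin)(x-\alphac)+1\bigr]$, which directly gives $(x-\alphamin)(\alphac-x)\leqs\gamma$, and then the midpoint inequality $x\leqs-z_0\leqs\tfrac12(\alphamin+\alphac)$ (from $\alphamax\leqs2z_0$ and Vieta) converts this to $(x-\alphamin)^2\leqs\gamma$. Your proof of the key estimate $\alphamax\leqs2z_0$ via the factorisation $\alphamax^3-3z_0^2\alphamax-2z_0^3=(\alphamax-2z_0)(\alphamax+z_0)^2$ is also more transparent than the paper's, which instead computes $g(2z_0)\geqs2\hat\alpha(\lambda)\geqs2\alphamax=g(\alphamax)$ and uses monotonicity of $g$ on $[z_0,\infty)$. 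The paper's route has the minor advantage of producing the sharper intermediate bound $\sqrt{\gamma(\alphamax-\alphamin)/(2z_0-\alphamin)}$ before simplification, but for the lemma as stated your argument is shorter and makes the role of the domain $D$ (as the locus where $\alphamax\leqs2z_0$) more explicit.
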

\begin{proof}
Denote by $x_1$ the $x$-coordinate of the top-right corner of $\cV_-$
(see \figref{fig:horseshoe}). Then we have the relations  
\begin{align}
\nonumber
x_1^3 - (1-\gamma)x_1 - \lambda &= \gamma \alphamax\;, \\
\alphamin^3 - (1-\gamma)\alphamin - \lambda &= \gamma \alphamin\;. 
\label{eq:plg22} 
\end{align}
Taking the difference of the two lines, writing $x_1=\alphamin+\Delta_1$ and
recalling the definition $z_0=\sqrt{(1-\gamma)/3}$ yields
\begin{equation}
 \label{eq:plg23}
 \Delta_1 h_1(\Delta_1) = \gamma(\alphamax-\alphamin)\;, \qquad 
h_1(\Delta) = 3(\alphamin^2-z_0^2) +3\alphamin\Delta + \Delta^2\;.
\end{equation}
One easily checks that the map $\Delta\mapsto h_1(\Delta)/\Delta$ is decreasing.
Since $x_1 \leqs -z_0$ and thus $\Delta_1\leqs -z_0-\alphamin$, it follows that 
\begin{equation}
 \label{eq:plg24}
 \frac{h_1(\Delta_1)}{\Delta_1} \geqs
\frac{h_1(-z_0-\alphamin)}{-z_0-\alphamin} 
 = 2z_0 - \alphamin\;.
\end{equation} 
As a consequence, $(2z_0-\alphamin) \Delta_1^2 \leqs
\gamma(\alphamax-\alphamin)$, so that we conclude that 
\begin{equation}
\cV_- \subset 
\biggbrak{\alphamin,\alphamin +
\biggpar{\frac{\gamma(\alphamax-\alphamin)}{2z_0-\alphamin}}^{1/2}\,}
\times \bigbrak{\alphamin,\alphamax}\;. 
 \label{eq:plg_20A}
\end{equation} 
Now we claim that $\alphamax \leqs 2z_0$ holds for all $(\gamma,\lambda)\in D$. 
Indeed, if $g$ is the function defined in~\eqref{eq:plg_05}, then we have
by~\eqref{eq:def_D}
\begin{equation}
 \label{eq:plg25}
 g(2z_0) = \frac{2}{\gamma} \bigbrak{\lambdac (1-\gamma)^{3/2} - \lambda} 
 \geqs 2 \hat\alpha(\lambda)
\end{equation} 
for all $(\gamma,\lambda)\in D$. Hence by~\eqref{eq:plg_05c} we get
$g(2z_0) \geqs 2\alphamax = g(\alphamax)$, showing as claimed that $\alphamax
\leqs 2z_0$ since $g$ is increasing on $[z_0,\infty)$. Using this bound
in~\eqref{eq:plg_20A} yields the first relation in~\eqref{eq:plg_20}. 

In a similar way, if $x_2$ denotes the $x$-coordinate of the top-left corner of
$\cV_0$, one obtains that $\Delta_2=\alpha_c-x_2$ satisfies 
\begin{equation}
 \label{eq:plg26}
 \Delta_2 h_2(\Delta_2) = \gamma(\alphamax-\alphac)\;, \qquad 
h_2(\Delta) = 3(z_0^2-\alphac^2) +3\alphac\Delta - \Delta^2\;.
\end{equation}
One obtains again that $\Delta\mapsto h_2(\Delta)/\Delta$ is decreasing, and
its smallest value, reached at $\Delta_2=\alphac+z_0$, is equal to
$2z_0-\alphac$. The other relevant coordinates can be computed in the same way,
yielding 
\begin{align}
\nonumber
\cV_0 &\subset \biggbrak{\alphac -
\biggpar{\frac{\gamma(\alphamax-\alphac)}{2z_0-\alphac}}^{1/2},\alphac +
\biggpar{\frac{\gamma(\alphac-\alphamin)}{2z_0+\alphac}}^{1/2}\,}
\times \bigbrak{\alphamin,\alphamax}\;,\\
\cV_+ &\subset \biggbrak{\alphamax - 
\biggpar{\frac{\gamma(\alphamax-\alphamin)}{2z_0+\alphamax}}^{1/2},\alphamax}
\times \bigbrak{\alphamin,\alphamax}\;.
 \label{eq:plg_20B}
\end{align} 
The conclusion follows as before using $\alphamax \leqs 2z_0$ and the symmetric
relation $-\alphamin \leqs 2z_0$.
\end{proof}

Fix a triple $(a_0,a_1,a_2)$, with as usual the $a_i$ increasing integers
of sum $N$. We denote by $\lambda_0$ the common value of the
$\alpha_j^3-\alpha_j$, where $\set{\alpha_j}_{j\in\set{0,1,2}}$ are 
given in~\eqref{eq:alpha_j}. 
For arbitrary $\lambda\in[-\lambdac,\lambdac]$ we define the quantity 
\begin{equation}
 \label{eq:plg_30}
 \Sigma_0(\lambda) = \frac{1}{N} \bigbrak{a_0\alpha_0(\lambda) +
a_1\alpha_1(\lambda) + a_2\alpha_2(\lambda)}\;,
\end{equation} 
where the $\alpha_j(\lambda)$ are three distinct roots of $x^3-x-\lambda$,
numbered in such a way that $\alpha_j(\lambda_0)=\alpha_j$. By construction, we
have $\Sigma_0(\lambda_0)=0$. 

Proposition~\ref{prop:horseshoe1} ensures the existence, for
$(\gamma,\lambda)\in D$, of a continuous family $x^\star(\gamma,\lambda)$ of
solutions of~\eqref{eq:plg_01}, such that $x^\star(0,\lambda)$ has $a_j$
coordinates equal to $\alpha_j(\lambda)$. We set 
\begin{equation}
 \label{eq:plg_40}
 \Sigma_\gamma(\lambda) = \frac{1}{N} \sum_{i=1}^N x^\star_i(\gamma,\lambda)\;.
\end{equation} 
It follows directly from Lemma~\ref{lem:horseshoe} that 
\begin{equation}
 \label{eq:plg_41}
 \Sigma_0(\lambda) - \sqrt{\gamma} \leqs \Sigma_\gamma(\lambda) 
 \leqs \Sigma_0(\lambda) + \sqrt{\gamma}\;.
\end{equation} 
If $\lambda\mapsto \Sigma_\gamma(\lambda)$ changes sign in $D$ at some
$\lambda_*(\gamma)$, then $x^\star(\gamma,\lambda_*(\gamma))$ is indeed a
stationary point of $V_\gamma$ satisfying the constraint $x^\star\in S$.
Assuming for the moment that such a point exists, the following result
characterises its signature.  

\begin{lemma}
\label{lem:horseshoe2}
Assume that $(\gamma,\lambda)\in \Int D'$, where $D'\subset D$ is defined
in~\eqref{eq:def_Dprime}. Then any stationary point $x^\star$ of the family
$B_k$, with triple $(a_0,a_1,a_2)=(0,M-k,M+k)$, is a local minimum of the
constrained potential $V_\gamma$. Furthermore, there exists a constant $c_0>0$
such that if $\gamma\leqs c_0\sqrt{\lambdac - \abs{\lambda}}$, then any
stationary point $x^\star$ of the family $C_k$, with triple
$(a_0,a_1,a_2)=(1,M-k-1,M+k)$, is a saddle of index $1$ of the constrained
potential $V_\gamma$.
\end{lemma}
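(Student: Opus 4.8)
The plan is to control the Morse index of the restriction $V_\gamma|_S$ at the stationary point $x^\star$ — the continuation in $(\gamma,\lambda)$ of a $\gamma=0$ point of the prescribed triple, lying in $S$ because $\lambda$ is chosen on the locus $\Sigma_\gamma(\lambda)=0$ — by treating its Hessian as a perturbation of the Hessian at $\gamma=0$, whose signature is given by Proposition~\ref{prop_saddles}. Since $S$ is a linear subspace, the Hessian of $V_\gamma|_S$ at a constrained stationary point is simply the restriction to $S=T_{x^\star}S$ of the ambient Hessian
\begin{equation}
 \nabla^2 V_\gamma(x^\star) = \diag\bigpar{3(x^\star_i)^2-1} + \gamma L = D_0 + E + \gamma L\;,
\end{equation}
where $D_0=\diag(3\alpha_{j(i)}(\lambda)^2-1)$ is built from the roots $\alpha_j(\lambda)$ of $\xi^3-\xi=\lambda$, the diagonal matrix $E=\diag\bigpar{3((x^\star_i)^2-\alpha_{j(i)}^2)}$ encodes the drift of the coordinates for $\gamma>0$, and $L$ is the (normalised) cycle Laplacian, positive semidefinite with $\ker L=\R\vone$ and $\norm L\leqs 2$ independently of $N$. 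Two facts will be used repeatedly: Lemma~\ref{lem:horseshoe} gives $\abs{x^\star_i-\alpha_{j(i)}}\leqs\sqrt\gamma$ for every $i$, and since the roots lie in $[-\tfrac{2}{\sqrt3},\tfrac{2}{\sqrt3}]$ this yields $\norm E\leqs C_1\sqrt\gamma$ for an absolute constant $C_1$; and, because $S=\vone^\perp$, the restriction of $L$ to $S$ is positive \emph{definite}, so the coupling term only helps positive-definiteness (though it must be tracked with both signs in the index-one case).

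For the family $B_k$, with triple $(0,M-k,M+k)$, the coordinates take only the two values $\alphamax(\lambda)$ and $\alphamin(\lambda)$, and~\eqref{eq:alpha_j} gives $3\alphamax^2-1>3\alphamin^2-1>0$ because $M>3k$. Hence $D_0$, and a fortiori its restriction to $S$, is positive definite with smallest eigenvalue $\delta_0:=3\alphamin(\lambda)^2-1>0$; adding $\gamma L|_S$ preserves this, and it remains only to check $C_1\sqrt\gamma<\delta_0$. Here I would use the fold structure of $\xi\mapsto\xi^3-\xi$ near $\xi=\pm\tfrac{1}{\sqrt3}$: an elementary expansion shows $\lambdac-\abs\lambda\asymp(3\alphamin(\lambda)^2-1)^2$ uniformly as $\abs\lambda\to\lambdac$, while $\delta_0$ is bounded below by a positive constant when $\abs\lambda$ stays away from $\lambdac$. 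Combined with the defining inequality of $D'$ in~\eqref{eq:def_Dprime}, $\abs\lambda\leqs\lambdac(1-\tfrac{9}{2}\gamma)$, i.e. $\lambdac-\abs\lambda\geqs\tfrac{9}{2}\gamma\lambdac$, this gives $\delta_0^2\gtrsim\gamma$ with an explicit constant, and checking that this constant dominates $C_1^2$ (the value $\tfrac{2}{9}$ already fixed in~\eqref{eq:def_Dprime} is conservative) shows $\nabla^2(V_\gamma|_S)(x^\star)\succ0$, so every $x^\star\in B_k$ is a local minimum.

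For the family $C_k$, with triple $(1,M-k,M+k-1)$, restriction to $S$ does not by itself preserve the index, so I would reuse the explicit orthogonal block decomposition from the proof of Proposition~\ref{prop_saddles}, case~(3): in a basis adapted to the constraint, $D_0|_S$ splits into scalar blocks equal to $3\alpha_1^2-1$ and $3\alpha_2^2-1$ (both strictly positive here) together with the $3\times3$ block $\cM$ of~\eqref{eq:pl0_06}, whose determinant has the sign of $a_0-2a_1+a_2=3k-M<0$ and which therefore has exactly one negative and two positive eigenvalues, giving index $1$. The goal is to produce $\delta_1>0$ with $\mathrm{spec}(D_0|_S)\cap(-\delta_1,\delta_1)=\emptyset$ and then invoke Weyl's inequality: once $\norm{E+\gamma L}\leqs C\sqrt\gamma<\delta_1$, the Hessian of $V_\gamma|_S$ has the same inertia as that of $V_0|_S$, namely index $1$. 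The smallest positive scalar block is $3\alphamin(\lambda)^2-1$, of size $\asymp\sqrt{\lambdac-\abs\lambda}$ near the fold; the negative eigenvalue of $\cM$ arises from the middle root $\alpha_0=\alphac$ and has magnitude $\asymp\abs{3\alphac^2-1}\asymp\sqrt{\lambdac-\abs\lambda}$; and the other two eigenvalues of $\cM$ stay bounded below. Thus $\delta_1\gtrsim\sqrt{\lambdac-\abs\lambda}$, and the extra hypothesis $\gamma\leqs c_0\sqrt{\lambdac-\abs\lambda}$ is precisely what enforces $C\sqrt\gamma<\delta_1$ once $c_0$ is small enough.

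The main obstacle, in both cases, is to make the comparison ``perturbation $<$ spectral gap'' genuinely quantitative in the regime $\abs\lambda\to\lambdac$, where two roots of $\xi^3-\xi=\lambda$ coalesce, an eigenvalue of $D_0|_S$ becomes of order $\sqrt{\lambdac-\abs\lambda}$, and the naive bound $\norm E\leqs C_1\sqrt\gamma$ is only of comparable order. The way around it is to sharpen Lemma~\ref{lem:horseshoe} for the coordinate sitting near the coalescing root: there the defining relation reads $\Delta\,h(\Delta)\asymp\gamma$ with $h(0)\asymp\abs{3\alphac^2-1}$, which yields $\abs{x^\star_i-\alphac}\lesssim\gamma/\abs{3\alphac^2-1}$ rather than $\sqrt\gamma$ whenever the root gap is not too small; combined with the square-root fold scaling and the margins built into $D'$ and into $c_0$, this closes the estimate. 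Tracking the numerical constants through this last step is the bulk of the remaining work.
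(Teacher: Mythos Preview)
Your strategy is sound and close to the paper's, but for the $B_k$ case you have made things harder than necessary, and the ``routine constant check'' you defer does not in fact close with the naive bound. The point is that your splitting $D_0+E$ is artificial: $D_0+E=\diag\bigl(3(x^\star_i)^2-1\bigr)$ is just the actual diagonal of the Hessian, so what you really need is $3(x^\star_i)^2>1$ for every coordinate. If you try to get this from $3\alpha_j^2-1\geqs\delta_0$ and $\lVert E\rVert\leqs C_1\sqrt\gamma$ via Lemma~\ref{lem:horseshoe}, the fold scaling gives $\delta_0\sim 2\sqrt3\,\sqrt\gamma$ at the boundary of $D'$, while $C_1$ is of the same order, so there is no margin; the triangle inequality is exactly borderline, not conservative. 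The paper instead observes that the constant $\tfrac92$ in the definition of $D'$ was chosen precisely so that $g(-1/\sqrt3)>2\alphamax$, which forces $\cV_-\subset\{x<-1/\sqrt3\}$ and symmetrically $\cV_+\subset\{x>1/\sqrt3\}$. Hence every coordinate of $x^\star\in B_k$ satisfies $3(x^\star_i)^2>1$ \emph{directly}, with no perturbation estimate at all; the quadratic form~\eqref{eq:plg_403} is then manifestly positive definite.

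For $C_k$ the two arguments are genuinely close: both are perturbative with gap $\asymp\sqrt{\lambdac-\abs\lambda}$ and perturbation $\asymp\gamma$. You work on the constrained Hessian via the block decomposition of Proposition~\ref{prop_saddles} and Weyl's inequality; the paper first perturbs the \emph{unconstrained} Hessian (whose diagonal has exactly one negative entry, again by the strip location $\cV_0\subset\{3x^2<1\}$) using Bauer--Fike, and then transfers the index-one conclusion to $S$ by an eigenvector perturbation estimate. Your refinement of Lemma~\ref{lem:horseshoe} near the coalescing root is not needed in the paper's route, because the diagonal entries themselves already carry the correct signs; the only perturbation to absorb is the Laplacian term, of operator norm at most $2\gamma$, which the hypothesis $\gamma\leqs c_0\sqrt{\lambdac-\abs\lambda}$ handles. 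So your plan works, but the paper's shortcut --- locating the strips relative to $\pm1/\sqrt3$ rather than relative to the roots $\alpha_j(\lambda)$ --- eliminates exactly the constant-tracking you flag as the bulk of the remaining work.
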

\begin{proof}
First we note that by definition of $D'$, the function $g$ defined
in~\eqref{eq:plg_05} satisfies 
\begin{equation}
 \label{eq:plg_401}
 g\biggpar{-\frac{1}{\sqrt3}} = \frac2\gamma(\lambdac-\lambda) -
\frac{2}{\sqrt3} > \frac{4}{\sqrt3} \geqs 2\alphamax\;,
\end{equation} 
which implies that points in $\cV_-$ have a first coordinate $x$ satisfying
$x<-1/\sqrt3$, and thus $3x^2>1$. By symmetry, points in $\cV_+$ also have a
first coordinate satisfying $3x^2>1$. Stationary points $x^\star$ in the family
$B_k$ have all coordinates in $\cV_\pm$,  since they are deformations of points
with all coordinates equal to $\alphamin$ or $\alphamax$. The Hessian matrix
$H_\gamma$ of the unconstrained potential at any stationary point $x^\star$
defines the quadratic form 
\begin{equation}
 \label{eq:plg_403}
 v \mapsto \pscal{v}{H_\gamma v} 
 = \sum_{i=1}^N \bigpar{3(x^\star)^2 -1} v_i^2 
 + \frac\gamma2 \sum_{i=1}^N \bigpar{v_i-v_{i+1}}^2 \;.
\end{equation} 
This form is clearly positive definite if $x^\star\in B_k$, showing that
$x^\star$ is a local minimum of the unconstrained potential. Thus it is also a
local minimum of the constrained potential. 

In the case where $x^\star\in C_k$, it has exactly one coordinate $x$ in
$\cV_0$, for which one easily checks that $3x^2 < 1$. Thus $H_0$ has exactly
one negative eigenvalue, showing that for $\gamma=0$, $x^\star$ is a $1$-saddle
of the unconstrained potential. By Proposition~\ref{prop_saddles}, $x^\star$ is
also a $1$-saddle of the constrained potential, so that there exists a vector
$v\in S$ such that $\pscal{v}{H_0v} < 0$. In fact, one can deduce
from~\eqref{eq:pl0_06} that the negative eigenvalue of $H_0$ is bounded above by
$-c_1\sqrt{\lambdac - \abs{\lambda}}$ for a $c_1>0$, while its other eigenvalues
are bounded below by $c_1\sqrt{\lambdac - \abs{\lambda}}$. 
Since the second term in~\eqref{eq:plg_403} has an $\ell^2$-operator norm equal
to $\gamma$ (it is a discrete Laplacian, diagonalisable by discrete Fourier
transform), the Bauer--Fike theorem shows that $x^\star$ remains a $1$-saddle
of the unconstrained potential as long as $\gamma < c_2\sqrt{\lambdac -
\abs{\lambda}}$ for some $c_2>0$. 

To show that this also holds for the constrained potential, we can use the fact
that the eigenvectors of a perturbed matrix move by an amount controlled by the
size of the perturbation (see for instance~\cite[Thm.~4.1]{Deif_1995}). In this
way, we obtain the existence of an orthogonal matrix $O_\gamma$ such that
$\delta O_\gamma = O_\gamma-\one$ has order $\gamma/\sqrt{\lambdac -
\abs{\lambda}}$ and $D_\gamma=O_\gamma H_\gamma\transpose{O_\gamma}$ is
diagonal, with the same eigenvalues as $H_\gamma$. It follows by
Cauchy--Schwarz that 
\begin{align}
\nonumber
 \pscal{v}{H_\gamma v} 
 &= \pscal{O_\gamma v}{D_\gamma O_\gamma v} \\
\nonumber
 &= \pscal{v}{D_\gamma v} + 2\pscal{\delta O_\gamma v}{D_\gamma v} +
\pscal{\delta O_\gamma v}{D_\gamma \delta O_\gamma v}\\
 &\leqs \biggpar{-c_3 \sqrt{\lambdac - \abs{\lambda}} +
\frac{c_4\gamma}{\sqrt{\lambdac -\abs{\lambda}}} + 
\frac{c_5\gamma^2}{\lambdac -\abs{\lambda}}} \norm{v}^2
 \label{eq:plg_404}
\end{align} 
for constants $c_3, c_4, c_5 > 0$. This shows that for $\gamma/(\lambdac
-\abs{\lambda})$ sufficiently small, $\pscal{v}{H_\gamma v}<0$ and thus
$x^\star$ is a saddle of index at least $1$ of the constrained system. However,
the index cannot be larger than for the unconstrained system, so that if must
equal $1$. 
\end{proof}

\begin{proof}[Proof of Theorem~\ref{thm:persistence}]
If we denote by $\pm\hat\lambda(\gamma)$ the upper and lower boundaries of $D$,
then a sufficient condition for $\Sigma_\gamma$ to change sign is 
\begin{equation}
 \label{eq:plg_42}
 \Sigma_0(\hat\lambda(\gamma)) > \sqrt{\gamma}
 \qquad\text{and}\qquad
 \Sigma_0(-\hat\lambda(\gamma)) < -\sqrt{\gamma}\;. 
\end{equation} 
Without limiting the generality, we assume $\lambda_0\geqs0$. Then the first of
the two conditions is the more stringent one. For the family $B_k$, using the
fact that $\alphamin(\lambda) = -\frac{1}{\sqrt3} -
\Order{\sqrt{\lambdac-\lambda}\,}$ near $\lambdac$ we obtain 
\begin{equation}
 \label{eq:plg_43}
 \Sigma_0(\lambda) = \frac{1}{\sqrt3} \biggpar{\frac12 - \frac{3k}{N}} 
 - c\biggpar{\frac12+\frac{k}{N}}\sqrt{\lambdac-\lambda} +
\Order{\lambdac-\lambda}
\end{equation} 
for some constant $c>0$. Since we also have
$\hat\lambda(\gamma)\geqs\lambdac(1-\frac92\gamma)=\lambdac-\sqrt3\gamma$,
inserting this in~\eqref{eq:plg_42} yields the result. The case of the
families $C_k$ is similar, noting that the bound on $\gamma$ in
Lemma~\ref{lem:horseshoe2} ensuring that they remain $1$-saddles is fulfilled
under the condition~\eqref{eq:wpc01}. 

In the case of the family $B_0$, one can obtain sharper bounds by first noting
that $x^\star$ has exactly half of its components in $\cV_-$ and the other half
in $\cV_+$. Using the bounds given in Lemma~\ref{lem:horseshoe}, we see
that~\eqref{eq:plg_41} can be strengthened to 
\begin{equation}
 \label{eq:plg_44}
 \Sigma_0(\lambda) - \frac12\sqrt{\gamma} \leqs \Sigma_\gamma(\lambda) 
 \leqs \Sigma_0(\lambda) + \frac12\sqrt{\gamma}\;.
\end{equation} 
Furthermore, we have 
\begin{equation}
 \label{eq:plg_45}
 \Sigma_0(\lambda) = \frac12 \alphamin(\lambda) + \frac12\alphamax(\lambda) =
-\frac12\alphac(\lambda)\;.
\end{equation} 
A sufficient condition for the stationary point to exist is thus 
\begin{equation}
 \label{eq:plg_46}
 -\alphac\bigpar{\lambdac(1-\tfrac92\gamma)} > \sqrt{\gamma}\;.
\end{equation} 
By definition of $\alphac$, this is equivalent to $\lambdac(1-\frac92\gamma) >
\sqrt{\gamma}(\gamma-1)$. Taking the square yields the condition $27\gamma^3 -
135\gamma^2+54\gamma-4 < 0$, which holds for $\gamma < \frac73 - \sqrt5$. 
\end{proof}


\section{Proofs: Metastable hierarchy}
\label{sec_proof_hierarchy}


\subsection{Hierarchy of the $B_k$}
\label{ssec_proof_Bk}

\begin{proof}[Proof of Theorem~\ref{thm:hierarchy_Bk}]
When $\gamma=0$, the value of the potential is constant on each family $B_k$
and $C_k$. Using the expressions~\eqref{eq:alpha_j} of the $\alpha_j$, one
obtains for these values 
\begin{equation}
 \label{eq:pBk01}
 V_0(B_k) = - \frac{aN(N-a)}{4(N^2-3aN+3a^2)}\;, 
 \qquad
 V_0(C_{k+1}) = - \frac{aN^2-(a^2+8a-8)N+9a(a-1)}{4(N^2-3aN+3a^2-3a+3)}\;,
\end{equation} 
where $a=M-k$ in both cases. Taking differences and simplifying yields  
\begin{align}
\label{eq:pBk02} 
V_0(C_{k+1}) - V_0(B_k) 
&= \frac{(a-1)(2N-3a)^3}{4(N^2-3aN+3a^2)(N^2-3aN+3a^2-3a+3)} =: h_1(a)\;, \\
V_0(C_k) - V_0(B_k) 
&= \frac{(N-a-1)(3a-N)^3}{4(N^2-3aN+3a^2)(N^2-3(a+1)N+3a^2+3a+3)} =: h_2(a)\;.
\nonumber
\end{align}
Computing derivatives and proceeding in a similar way as in the proof of
Theorem~\ref{thm:transition_graph}, one obtains that $a\mapsto h_1(a)$ is
decreasing, while $a\mapsto h_2(a)$ is increasing. Furthermore, it is immediate
to check that $h_1(N/2) = h_2(N/2)$. We thus obtain the inequalities 
\begin{equation}
 \label{eq:pBk03}
 \dots < V_0(C_2) - V_0(B_2) < V_0(C_1) - V_0(B_1) 
 < V_0(C_1) - V_0(B_0) < V_0(C_2) - V_0(B_1) < \dots
\end{equation}
(cf.~\figref{fig:BC}).
To prove~\eqref{eq:Bk01}, we have to check that
relation~\eqref{def:metastable_hierarchy} holds for each $B_k$. Indeed, on the
one hand we have 
\begin{equation}
 \label{eq:pBk04}
 H \biggpar{B_k, \bigcup_{i=0}^{k-1}B_i} = V_0(C_k) - V_0(B_k)
\end{equation} 
for $k\geqs 2$, while on the other hand 
\begin{align}
\nonumber
H\biggpar{B_\ell, \bigcup_{i=0}^k B_i\setminus B_\ell}
&\geqs V_0(C_\ell) - V_0(B_\ell)\;, 
&& \ell\in\intint{2}{k-1}\;, \\
H\biggpar{B_0, \bigcup_{i=1}^{k}B_i} &= V_0(C_1) - V_0(B_0)\;.
\label{eq:pBk05} 
\end{align}
Thus the result follows from~\eqref{eq:pBk03}. 

When $\gamma>0$ is sufficiently small, the same partition still forms a
metastable hierarchy, because the potential heights of the critical points
depend continuously on $\gamma$. 
\end{proof}


\subsection{Hierarchy on $B_0$}
\label{ssec_proof_B0}

\begin{proof}[Proof of Proposition~\ref{prop:transitions_B0}]
The fact that allowed transitions between elements in $B_0$ correspond to
exchanging a particle and a hole follow directly from the connection
rules~\eqref{eq:gamma0_rules}. Indeed, any element in $B_1$, with triple
$(0,M-1,M+1)$, is connected to $M+1$ elements of $B_0$, which differ by a
particle/hole transposition (see also~\figref{fig:transition_8}). 
Any such transition affects at most $4$ interfaces. Since the number of
interface is always even, we obtain~\eqref{eq:B0_03}. 

In order to compute communication heights, we have to determine the heights of
$1$-saddles $z^\star$ in $C_1$. Recall that each of these saddles has $1$
coordinate equal to $\alpha_0'$, $M-1$ coordinates equal to $\alpha_1'$ and $M$
coordinates equal to $\alpha_2'$, where 
\begin{equation}
 \label{eq:pB0_01}
 (\alpha_0',\alpha_1',\alpha_2') = \pm\omega'(1,1-M,M-2)\;, 
 \qquad
 \omega' = (M^2-3M+3)^{-1/2}\;.
\end{equation} 
Plugging this into~\eqref{eq:B0_02} yields 
\begin{align}
\nonumber
 V_\gamma\bigpar{z^\star(\gamma)} 
 ={}& V_0\bigpar{z^\star(0)} \\
\nonumber
 &{}+ \frac{\gamma}{4}(\omega')^2 
 \bigbrak{M^2 I_{\alpha'_0/\alpha'_1}(z^\star) 
 + (M-3)^2 I_{\alpha'_0/\alpha'_2}(z^\star) + 
 (2M-3)^2 I_{\alpha'_1/\alpha'_2}(z^\star)} \\
 &{}+ \Order{\gamma^2}\;,
 \label{eq:pB0_02}
\end{align} 
where, similarly to~\eqref{eq:B0_01a}, $I_{\alpha'_i/\alpha'_j}(z^\star)$
denotes the number of interfaces of type $\alpha'_i/\alpha'_j$ of $z^\star$. The
first-order correction to the height of the saddle thus only depends on the
triple 
\begin{equation}
 \label{eq:pB0_03}
 I(z^\star) = \bigbrak{I_{\alpha'_0/\alpha'_1}(z^\star),
I_{\alpha'_0/\alpha'_2}(z^\star), I_{\alpha'_1/\alpha'_2}(z^\star)}\;,
\end{equation} 
where we use square brackets in order to avoid confusion with the triple
$(0,M-1,M+1)$. Note in particular that $I_{\alpha'_0/\alpha'_1}(z^\star) +
I_{\alpha'_0/\alpha'_2}(z^\star) = 2$, since only $1$ component of $z^\star$ is
equal to $\alpha'_0$. The following lemma allows to compare all these saddle
heights. 

\begin{lemma}
\label{lem:saddle_heights}
For all even $p\in\intint{2}{N-2}$, the first-order terms $V^{(1)}$
in~\eqref{eq:pB0_02} satisfy 
\begin{equation}
 \label{eq:pB0_04}
 V^{(1)}\bigpar{[2,0,p]} > V^{(1)}\bigpar{[0,2,p]} > V^{(1)}\bigpar{[1,1,p-1]} >
V^{(1)}\bigpar{[2,0,p-2]}\;,
\end{equation} 
where $[a,b,c]$ stands for any saddle $z^\star$ such that $I(z^\star) =
[a,b,c]$. 
\end{lemma}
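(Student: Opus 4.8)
The plan is to reduce the statement to a purely algebraic comparison of three linear forms in the formal parameter $p$, after extracting a common positive prefactor. First I would record that, by~\eqref{eq:pB0_02}, the first-order term depends on the saddle only through its interface triple: $V^{(1)}([a,b,c]) = \tfrac14(\omega')^2\,\Phi(a,b,c)$ with $\omega' = (M^2-3M+3)^{-1/2}$ and $\Phi(a,b,c) := M^2 a + (M-3)^2 b + (2M-3)^2 c$ the bracketed expression in~\eqref{eq:pB0_02}. Since the quadratic $M^2-3M+3$ has negative discriminant, it is positive for every real $M$, so $\omega'$ and the prefactor $\tfrac14(\omega')^2$ are well-defined and strictly positive; hence~\eqref{eq:pB0_04} is equivalent to the chain $\Phi(2,0,p) > \Phi(0,2,p) > \Phi(1,1,p-1) > \Phi(2,0,p-2)$. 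I would also note, as already observed just before the lemma, that a saddle in $C_1$ has a single component equal to $\alpha'_0$, so the two lattice edges incident to that site are both interfaces, giving the constraint $I_{\alpha'_0/\alpha'_1}+I_{\alpha'_0/\alpha'_2}=2$; this is why only the first entries $(2,0)$, $(1,1)$, $(0,2)$ can occur in the four triples under comparison, the remaining freedom being absorbed into the third entry.

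Next I would compute the three consecutive differences of $\Phi$. In each of them the term weighted by $(2M-3)^2$ either cancels outright (first difference) or contributes exactly one copy of $(2M-3)^2$ (second and third differences), so the dependence on $p$ disappears entirely. A short calculation gives $\Phi(2,0,p)-\Phi(0,2,p) = 2M^2-2(M-3)^2 = 6(2M-3)$, and $\Phi(0,2,p)-\Phi(1,1,p-1) = \Phi(1,1,p-1)-\Phi(2,0,p-2) = (M-3)^2+(2M-3)^2-M^2 = 4M^2-18M+18 = 2(M-3)(2M-3)$. Since the hypothesis of the lemma gives $M=\tfrac N2\geq 4$, both $6(2M-3)$ and $2(M-3)(2M-3)$ are strictly positive, which establishes the chain of strict inequalities and hence~\eqref{eq:pB0_04}.

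I do not expect any genuine obstacle here: the entire content is the remark that, up to the common positive factor $\tfrac14(\omega')^2$, the four saddle heights are affine functions of $p$ with \emph{identical} slope $(2M-3)^2$ per interface of type $\alpha'_1/\alpha'_2$, so their pairwise order is governed solely by the $p$-independent contributions of the interfaces incident to the unique $\alpha'_0$-site. The only points requiring a little care are keeping the bookkeeping of the three interface types in $\Phi$ straight, and the elementary checks that $M^2-3M+3>0$ (so that $\omega'$ is real and the prefactor positive) and that $M\geq 4$ forces $2M-3>0$ and $M-3>0$; all of these are immediate.
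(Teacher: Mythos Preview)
Your proposal is correct and follows essentially the same approach as the paper: the paper's own proof is the single sentence ``This follows from a straightforward computation, using~\eqref{eq:pB0_02} and the fact that $(2M-3)(M-3) > 0$'', and you have simply carried out that computation explicitly, arriving at exactly the factor $(2M-3)(M-3)$ for the second and third differences and $6(2M-3)$ for the first. One small point of phrasing: the bound $M\geqs 4$ is not literally a hypothesis of the lemma but comes from the paper's standing assumption $N\geqs 8$ in Section~\ref{ssec_gamma0}; otherwise there is nothing to add.
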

\begin{proof}
This follows from a straightforward computation, using~\eqref{eq:pB0_02} and the
fact that $(2M-3)(M-3) > 0$. 
\end{proof}

It remains to apply these expressions to the different transitions in
Table~\ref{table:transitions_B0}. Consider for instance the transition shown in
\figref{fig:transition_8}, which is of type \rom{2}.b. The two saddles
encountered during the transition are of type $[1,1,p-1]$ and $[0,2,p]$, where
$p=2$ is the number of interfaces of the start configuration $x^\star$.
Lemma~\ref{lem:saddle_heights} shows that the second saddle is the highest. 
Combining this with the expression~\eqref{eq:B0_02} of the height of $x^\star$
yields 
\begin{equation}
 \label{eq:pB0_05}
 H^{(1)} = \frac{2(M-3)^2+p(2M-3)^2}{4(M^2-3M+3)} - p 
 = \frac{2(M-3)^2 - 3p}{4(M^2-3M+3)}\;,
\end{equation} 
which is precisely the expression given in the second line of
Table~\ref{table:transitions_B0}. 

\begin{table}[tb]
\begin{center}
\begin{tabular}{|c||c|c|c|c|c|c|}
\hline
\vrule height 11pt depth 4pt width 0pt
    & \rom{1} & \rom{2}.a/b & \rom{3} & \rom{4}.a/b & \rom{5}.a/b & \rom{6} \\
\hline
\hline
\vrule height 11pt depth 4pt width 0pt
$\eta$ & $(0,0)$ & $(0,1)/(1,0)$ & $(1,1)$ & $(0,2)/(2,0)$ & $(1,2)/(2,1)$ &
$(2,2)$ \\
\hline
\vrule height 11pt depth 5pt width 0pt
$I(z^\star)$ & $[0,2,p+2]$ & $[0,2,p]$ & $[1,1,p-1]$ & $[0,2,p-2]$ & $[0,2,p-2]$
& $[0,2,p-2]$ \\
\hline
\end{tabular}
\end{center}
\caption[]{Interface numbers of the highest saddle encountered along a minimal
path between two configurations in $B_0$, if the two exchanged sites $i$ and
$j$ are not nearest neighbours. The labels in the first row are the same as in
Table~\ref{table:transitions_B0}, and $\eta$ denotes the number of interfaces of
$i$ and $j$.}
\label{table:pot_notneighbour} 
\end{table}

\begin{table}[tb]
\begin{center}
\begin{tabular}{|c||c|c|c|}
\hline
\vrule height 11pt depth 4pt width 0pt
    & \rom{2}.c & \rom{4}.c/d & \rom{5}.c \\
\hline
\hline
\vrule height 11pt depth 4pt width 0pt
$\eta$ & $(1,1)$ & $(1,2)/(2,1)$ & $(2,2)$ \\
\hline
\vrule height 11pt depth 5pt width 0pt
$I(z^\star)$ & $[0,2,p]$ & $[0,2,p-2]$ & $[0,2,p-2]$ \\
\hline
\end{tabular}
\end{center}
\caption[]{Interface numbers of the highest saddle encountered along a minimal
path between two configurations in $B_0$, if the two exchanged sites $i$ and
$j$ are nearest neighbours.}
\label{table:pot_neighbour} 
\end{table}

The other cases are treated in a similar way. One just has to take care of the
fact that the transition rules~\eqref{eq:gamma0_rules} allow for two possible
paths, depending on whether $(\alpha_1,\alpha_2)=(1,-1) + \Order{\gamma}$ or
$(-1,1) + \Order{\gamma}$. It is thus necessary to determine the minimum of the
communication heights associated with these two paths.  
Table~\ref{table:pot_notneighbour} shows the associated saddles in cases where
the exchanged sites are not nearest neighbours. Table~\ref{table:pot_neighbour}
shows the same when the exchanged sites are nearest neighbours. These saddle
interface numbers are indeed those shown in Table~\ref{table:transitions_B0}.  
\end{proof}

\begin{proof}[Proof of Theorem~\ref{thm:meta_B0}]
In a similar way as in the proof of Theorem~\ref{thm:hierarchy_Bk}, we prove
that the relation~\eqref{eq:def_meta_hierarchy} holds when the $A_p$ and $A'_p$
are ordered according to~\eqref{eq:B0_hierarchy}. Since all communication
heights are the same when $\gamma=0$, it will be sufficient to compare the
first-order coefficients $H^{(1)}$. 

We start by showing that $A_2 \prec A'_4 \prec \dots \prec A'_{M'}$.
For any $p\in\intint{4}{M'}$, we note that 
\begin{equation}
 \label{eq:pB0_10}
 H^{(1)}(A'_p, A_2\cup A'_4\cup \dots \cup A'_{p-2}) 
 = H^{(1)}(A'_p, A_p) 
 = \frac{-2M^2+6M-3p}{4(M^2-3M+3)}\;.
\end{equation} 
Indeed, the highest saddle encountered along a minimal path from $A'_p$ to
$A_2\cup A'_4\cup \dots A'_{p-2}$ occurs during the type-\rom{3} transition from
$A'_p$ to $A_p$, and has interface number $[1,1,p-1]$. Since~\eqref{eq:pB0_10}
is a decreasing function of $p$, the condition~\eqref{eq:def_meta_hierarchy} is
indeed satisfied. 

Next we observe that 
\begin{equation}
 \label{eq:pB0_11}
 H^{(1)}(A_4, A_2\cup A'_4\cup \dots \cup A'_{M'}) 
 = H^{(1)}(A_4, A_2) 
 = \frac{-6M^2+12M-12}{4(M^2-3M+3)}\;.
\end{equation} 
Indeed, here the minimal path goes directly from $A_4$ to $A_2$, via a saddle
of type $[0,2,2]$. The expression~\eqref{eq:pB0_11} is indeed smaller
than~\eqref{eq:pB0_10} for $p=M'$, the numerator of the difference being
bounded by $4M^2-9M+12$ which is always positive. 

Finally, we see that we have 
\begin{equation}
 \label{eq:pB0_12}
 H^{(1)}(A_p, A_2\cup \dots\cup A'_{M'}\cup A_4\cup\dots\cup A_{p-2}) 
 = H^{(1)}(A_p, A_{p-2}) 
 = \frac{-6M^2+12M-3p}{4(M^2-3M+3)}\;,
 \end{equation} 
the minimal path reaching communication height on a saddle of type $[0,2,p-2]$.
Since~\eqref{eq:pB0_12} is again a decreasing function of $p$, the claim
follows.
\end{proof}


\section{Proofs: Spectral gap}
\label{sec_proof_gap}

In order to prove Theorem~\ref{thm:spectral_gap}, we have to take into account
the symmetries of the potential $V_\gamma$. This will allow us to apply the
theory in~\cite{BD15} on metastable processes that are invariant under a group
of symmetries, which relies on Frobenius' representation theory of finite groups
(see for
instance~\cite{Serre_groups}). 

The potential $V_\gamma$ is invariant under the three transformations 
\begin{align}
\nonumber 
r: (x_1,\dots,x_N) &\mapsto (x_2,\dots,x_N,x_1)\;, \\
\label{eq:psg01} 
s: (x_1,\dots,x_N) &\mapsto (x_N,\dots,x_1)\;, \\
c: (x_1,\dots,x_N) &\mapsto (-x_1,\dots,-x_N)\;.
\nonumber 
\end{align}
It is thus invariant under the group $G$ generated by these three
transformations. This group can be written $G=\mathfrak{D}_N\times\Z_2$, where
$\mathfrak{D}_N$ is the dihedral group of symmetries of a regular $N$-gon
generated by $r$ and $s$, while $\Z_2=\set{\id,c}$ is the group generated by
$c$, which commutes with $r$ and $s$. The group $G$ has order $4N$, and its
elements can be written $r^i s^j c^k$ with $i\in\intint{0}{N-1}$ and $j,
k\in\set{0,1}$. It admits exactly $8$ one-dimensional irreducible
representations given by 
\begin{equation}
 \label{eq:psg02}
 \pi_{\rho\sigma\tau}(r^i s^j c^k) = \rho^i\sigma^j\tau^k\;, 
 \qquad \rho, \sigma, \tau = \pm1\;,
\end{equation} 
and $N-2$ irreducible representation of dimension $2$, whose characters are 
\begin{equation}
 \label{eq:psg03}
 \chi_{\ell,\pm}(r^i s^j c^k) 
 = \Tr \pi_{\ell,\pm}(r^i s^j c^k) 
 = 2 \cos\biggpar{\frac{2i\ell\pi}{N}}
 \delta_{j0}(\pm1)^k\;, 
 \qquad \ell \in\intint{1}{\tfrac N2-1}\;.
\end{equation} 
The basic idea of the approach given in~\cite{BD15} is that each of these
$N+6$ irreducible representations provides an invariant subspace of the
generator $L$ of the Markovian jump process on $\cS_0$ that approximates the
dynamics of the diffusion. Thus the restriction of $L$ to each of these
subspaces yields a part of the spectrum of $L$. The eigenvalues of $L$ can then
be shown to be close to the exponentially small eigenvalues of
$\cL$~\cite{Dutercq_PhD,Dutercq_in_preparation}. We thus have to determine,
for each irreducible representation, the smallest eigenvalue of $-L$. We do
this in two main steps: first we show that the Arrhenius exponent of each
smallest nonzero eigenvalue is given by the potential difference between
certain stationary points in $C_1$ and in $A_2$, and then we compute the
smallest prefactor of these eigenvalues. 


\subsection{Arrhenius exponent}
\label{ssec_gap_arrhenius}

Each $g\in G$ induces a permutation $\pi_g$ on the set of local minima $\cS_0$,
leaving invariant each group orbit $O_a = \setsuch{ga}{g\in G}$. Since
$V_\gamma$ is $G$-invariant, the generator $L$ commutes with all these
permutations. Thus there exist subspaces which are jointly invariant under $L$
and all the $\pi_g$. Each of the irreducible representations of $G$ provides one
of these subspaces.  

Let $\pi$ be one of the irreducible representations of $G$, and let
$d\in\set{1,2}$ be its dimension. Then~\cite[Lemma~3.6]{BD15} shows that the
associated invariant subspace, when restricted to $O_a$, has dimension
$d\alpha^\pi_a$, where 
\begin{equation}
 \label{eq:gap_arr01}
 \alpha^\pi_a = \frac{1}{\abs{G_a}} \sum_{h\in G_a} \chi(h) \in\intint{0}{d}\;. 
\end{equation} 
Here $\chi = \Tr\pi$ denotes the character of $\pi$, and $G_a=\setsuch{g\in
G}{ga=a}$ the stabiliser of $a$. We call \emph{active} with respect to the
irreducible representation $\pi$ the orbits $O_a$ such that $\alpha^\pi_a > 0$.
Only active orbits will occur in the restriction of $L$ to the invariant
subspace associated with $\pi$; they are represented by a block of size
$d\alpha^\pi_a \times d\alpha^\pi_a$. 

We select three representatives $x^\star\in A_2$, $y^\star\in B_1$ and
$z^\star\in C_1$ such that $x^\star$ is connected to $y^\star$ via $z^\star$ in
the transition graph $\cG$. A possible choice is 
\begin{align}
\nonumber
x^\star &= (\alpha_1,\dots,\alpha_1,\alpha_1,\alpha_2\dots,\alpha_2)\;, \\
\label{eq:gap_arr02}
z^\star &= (\alpha'_1,\dots,\alpha'_1,\alpha'_0,\alpha'_2\dots,\alpha'_2)\;, \\
y^\star &=
(\alpha''_1,\dots,\alpha''_1,\alpha''_2,\alpha''_2\dots,\alpha''_2)\;,
\nonumber
\end{align}
where $\alpha_1=1+\Order{\gamma}$, $\alpha_2=-1+\Order{\gamma}$ and $\alpha'_2$
are each repeated $M$ times, $\alpha'_1$ and $\alpha''_1$ are repeated $M-1$
times and $\alpha''_2$ is repeated $M+1$ times. The orbit $O_x$ of $x^\star$ is
precisely $A_2$, and it has $N$ elements. The orbits $O_y$ of $y^\star$ and
$O_z$ of $z^\star$ have respectively $2N$ and $4N$ elements (they are proper
subsets of $B_1$ and $C_1$). The associated stabilisers are given by 
\begin{align}
\nonumber
G_x &= \set{\id, r^Ms, r^Mc, sc}\;, \\
\label{eq:gap_arr03}
G_y &= \set{\id, r^{M-1}s}\;, \\
G_z &= \set{\id}\;,
\nonumber
\end{align}
where $\id$ denotes the identity of $G$ and $M=\frac N2$. Note in particular
that 
\begin{equation}
 \label{eq:gap_arr04}
 \frac{\abs{G_x}}{\abs{G_x\cap G_y}} = 4\;, 
 \qquad 
 \frac{\abs{G_y}}{\abs{G_x\cap G_y}} = 2\;.
\end{equation} 
This means that each element in $G_x$ is connected to $4$ elements in $G_y$
(via $4$ saddles in $G_z$), and that each element in $G_y$ is connected to $2$
elements in $G_x$ (cf.~\cite[(2.25)]{BD15}). 

The possible Arrhenius exponents of eigenvalues of $L$ are directly linked to
which orbits are active for the different irreducible representations. We start
with irreducible representations of dimension $1$, cf.~\eqref{eq:psg02}. 

\begin{prop}
\label{prop:active_1}  
Let $\pi$ be a $1$-dimensional irreducible representation of $G$. Then 
\begin{itemiz}
\item 	if $M$ is even, then $O_x = A_2$ is active if and only if 
$\pi(s)=\pi(c)=1$;
\item 	if $M$ is odd, then $O_x = A_2$ is active if and only if 
$\pi(r)=\pi(s)=\pi(c)$;
\item 	if $M$ is even, then $O_y$ is active if and only if 
$\pi(r)=\pi(s)$;
\item 	if $M$ is odd, then $O_y$ is active if and only if 
$\pi(s)=1$.
\end{itemiz}
\end{prop}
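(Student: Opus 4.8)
The plan is to read off the result directly from the character formula \eqref{eq:gap_arr01}, combined with the explicit stabilisers listed in \eqref{eq:gap_arr03} and the parametrisation \eqref{eq:psg02} of the one-dimensional representations. The key preliminary observation is that when $\pi$ has dimension $d=1$, its character $\chi$ equals $\pi$ itself, which is a group homomorphism $G\to\set{-1,+1}$. Hence for any orbit $O_a$ one has
$$
\alpha^\pi_a = \frac{1}{\abs{G_a}}\sum_{h\in G_a}\pi(h)\;,
$$
and since $\pi|_{G_a}$ is again a homomorphism into $\set{-1,+1}$, this sum equals $\abs{G_a}$ if $\pi$ is trivial on $G_a$, and $0$ otherwise (in the latter case $\ker(\pi|_{G_a})$ has index $2$ in $G_a$). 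Thus $\alpha^\pi_a\in\set{0,1}$, and $O_a$ is active precisely when $G_a\subseteq\ker\pi$.

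First I would handle $O_x=A_2$. By \eqref{eq:gap_arr03} we have $G_x=\set{\id,r^Ms,r^Mc,sc}$, so $O_x$ is active if and only if $\pi(r^Ms)=\pi(r^Mc)=\pi(sc)=1$. Writing $\pi=\pi_{\rho\sigma\tau}$ with $\rho=\pi(r)$, $\sigma=\pi(s)$, $\tau=\pi(c)$ in $\set{-1,+1}$ and using $\rho^2=1$, these conditions become $\rho^M\sigma=1$, $\rho^M\tau=1$ and $\sigma\tau=1$, the last being the product of the first two and hence automatically satisfied. If $M$ is even then $\rho^M=1$ and the conditions reduce to $\sigma=\tau=1$, i.e.\ $\pi(s)=\pi(c)=1$; if $M$ is odd then $\rho^M=\rho$ and they reduce to $\sigma=\tau=\rho$, i.e.\ $\pi(r)=\pi(s)=\pi(c)$. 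This yields the first two statements.

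Next I would treat $O_y$. Here $G_y=\set{\id,r^{M-1}s}$, so $O_y$ is active if and only if $\pi(r^{M-1}s)=\rho^{M-1}\sigma=1$. If $M$ is even, then $M-1$ is odd, $\rho^{M-1}=\rho$, and the condition is $\sigma=\rho$, i.e.\ $\pi(r)=\pi(s)$; if $M$ is odd, then $M-1$ is even, $\rho^{M-1}=1$, and the condition is $\sigma=1$, i.e.\ $\pi(s)=1$. This gives the last two statements.

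I do not expect any real obstacle here: the argument is essentially bookkeeping with the characters of $G=\mathfrak{D}_N\times\Z_2$. The only two points that need a little care are noticing that the three defining conditions for activeness of $O_x$ are not independent, and correctly tracking the parity of the exponents $M$ and $M-1$ when reducing modulo the relation $\pi(r)^2=1$.
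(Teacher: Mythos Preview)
Your proof is correct and follows essentially the same approach as the paper: both reduce activeness to the condition $G_a\subseteq\ker\pi$, compute $\pi$ on the generators of $G_x$ and $G_y$ listed in~\eqref{eq:gap_arr03}, and then split cases according to the parity of $M$. Your write-up is slightly more explicit in justifying why $\alpha^\pi_a\in\set{0,1}$ and in noting that the three conditions for $O_x$ are not independent, but the argument is the same.
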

\begin{proof}
The orbit $O_x$ is active if and only if $\pi(g)=1$ for all $g\in G_x$. Since
$\pi(\id)=1$, $\pi(r^Ms)=\pi(r)^M\pi(s)$, $\pi(r^Mc)=\pi(r)^M\pi(c)$ and
$\pi(sc)=\pi(s)\pi(c)$, this holds if and only if $\pi(r)^M=\pi(s)=\pi(c)$. 
Similarly, the orbit $O_y$ is active if and only if $\pi(r)^{M-1}=\pi(s)$.
\end{proof}

The corresponding result for the $2$-dimensional irreducible
representations given in~\eqref{eq:psg03} reads as follows.

\begin{prop}
\label{prop:active_2}  
Let $\pi_{\ell,\pm}$ be a $2$-dimensional irreducible representation. Then
\begin{itemiz}
\item 	$O_x=A_2$ is active for $\pi_{\ell,+}$ if and only if $\ell$ is even; 
\item 	$O_x=A_2$ is active for $\pi_{\ell,-}$ if and only if $\ell$ is odd;
\item 	$O_y$ is active for all representations $\pi_{\ell,\pm}$.
\end{itemiz}
\end{prop}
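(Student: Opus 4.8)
The plan is to evaluate the non-negative integer $\alpha_a^{\pi}$ defined in~\eqref{eq:gap_arr01} for each $2$-dimensional irreducible representation $\pi=\pi_{\ell,\pm}$ and each of the two orbits $O_x=A_2$ and $O_y$, using the explicit character formula~\eqref{eq:psg03} together with the explicit stabilisers listed in~\eqref{eq:gap_arr03}. Since $\alpha_a^{\pi}\in\intint{0}{2}$ is a sum of at most four character values divided by $\abs{G_a}$, the whole argument reduces to a short computation, and "active" simply means this integer is strictly positive.

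First I would handle $O_x=A_2$, whose stabiliser is $G_x=\set{\id, r^Ms, r^Mc, sc}$ of order $4$. Writing a general element of $G$ as $r^is^jc^k$, the two elements $r^Ms$ and $sc$ have $j=1$, so by the factor $\delta_{j0}$ in~\eqref{eq:psg03} their $\pi_{\ell,\pm}$-character vanishes; the identity contributes $\chi_{\ell,\pm}(\id)=2$; and for $r^Mc=r^Ms^0c^1$ one has $j=0$, $k=1$, so $\chi_{\ell,\pm}(r^Mc)=2\cos(2M\ell\pi/N)(\pm1)=\pm2\cos(\ell\pi)=\pm2(-1)^\ell$, using $M=\frac N2$. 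Hence
\begin{equation*}
\alpha_x^{\pi_{\ell,\pm}}=\frac14\bigl(2\pm2(-1)^\ell\bigr)=\frac{1\pm(-1)^\ell}{2},
\end{equation*}
which equals $1$ when $\pm(-1)^\ell=1$ and $0$ otherwise. This gives exactly the first two bullet points: $O_x$ is active for $\pi_{\ell,+}$ if and only if $\ell$ is even, and for $\pi_{\ell,-}$ if and only if $\ell$ is odd.

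Next I would treat $O_y$, with stabiliser $G_y=\set{\id, r^{M-1}s}$ of order $2$. The non-trivial element $r^{M-1}s$ again has $j=1$, so $\chi_{\ell,\pm}(r^{M-1}s)=0$, while $\chi_{\ell,\pm}(\id)=2$; therefore $\alpha_y^{\pi_{\ell,\pm}}=\frac12(2+0)=1>0$ for every $\ell$ and every choice of sign, so $O_y$ is active for all $\pi_{\ell,\pm}$. There is no real obstacle here: the only points that need a little care are the evaluation $\cos(2M\ell\pi/N)=\cos(\ell\pi)=(-1)^\ell$ and keeping track of the factor $(\pm1)^k$ in~\eqref{eq:psg03}, so that the $c$-containing stabiliser element of $G_x$ is treated correctly. (Everything else — the vanishing of characters on reflections and the values at the identity — is immediate from~\eqref{eq:psg03}.)
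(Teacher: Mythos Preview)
Your proof is correct and follows essentially the same approach as the paper: both evaluate the character $\chi_{\ell,\pm}$ from~\eqref{eq:psg03} on the stabiliser elements listed in~\eqref{eq:gap_arr03} and plug into~\eqref{eq:gap_arr01}. Your version is slightly more explicit in writing out $\alpha_x^{\pi_{\ell,\pm}}=\tfrac12(1\pm(-1)^\ell)$, but the computation and reasoning are identical.
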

\begin{proof}
By~\eqref{eq:psg03} we have $\chi_{\ell,\pm}(\id)=2$, $\chi_{\ell,\pm}(r^Ms)=0$,
$\chi_{\ell,\pm}(r^Mc)=\pm2\cos(\ell\pi)$ and $\chi_{\ell,\pm}(sc)=0$. 
Thus the sum~\eqref{eq:gap_arr01} for $O_x$ is different from $0$ for
$\chi_{\ell,+}$ if and only if $\ell$ is even, and for $\chi_{\ell,-}$ if and
only if $\ell$ is odd. Since $\chi_{\ell,\pm}(r^{M-1}s)=0$, the sum for $O_y$
is always equal to $1$.  
\end{proof}

\begin{cor}
The maximal Arrhenius exponent of all nonzero eigenvalues of the generator is
given by $V_\gamma(z^\star) - V_\gamma(x^\star)$.
\end{cor}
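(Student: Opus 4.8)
The plan is to show that every nonzero eigenvalue of the generator $L$ has Arrhenius exponent bounded above by $V_\gamma(z^\star) - V_\gamma(x^\star)$, and that this value is actually attained. The key structural fact, from the representation-theoretic decomposition of~\cite{BD15}, is that the small eigenvalues of $L$ split according to the irreducible representations of $G = \mathfrak{D}_N \times \Z_2$, and within the block associated to a given irreducible representation $\pi$, only the \emph{active} orbits (those with $\alpha_a^\pi > 0$) contribute. So the first step is to observe that the Arrhenius exponent of the smallest nonzero eigenvalue in the $\pi$-block is governed by the communication heights between the active orbits, together with the metastable hierarchy on $B_0$ established in Theorem~\ref{thm:meta_B0} and on the $B_k$ in Theorem~\ref{thm:hierarchy_Bk}.

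The second step is to run through the list of orbits and determine which can be active. The deepest orbits lie in $B_0$, and by Theorem~\ref{thm:meta_B0} the configurations in $A_2$ sit at the very bottom of the hierarchy. Any transition between two configurations in $A_2$ must pass through a saddle in $C_1$ (via an intermediate point in $B_1$), so the communication height between distinct elements of $A_2$ equals $V_\gamma(z^\star) - V_\gamma(x^\star)$ to leading order, with $z^\star \in C_1$ having exactly $3$ interfaces in the limit $\gamma \to 0$. Every other pair of local minima is separated either by the same barrier or by a strictly smaller one: transitions staying within $B_0$ between configurations with $p \geqs 4$ interfaces involve the same $C_1$ saddles but start from a higher-lying minimum, hence a smaller communication height, while transitions descending from $B_k$ to $B_{k-1}$ for $k \geqs 1$ have communication height $V_0(C_k) - V_0(B_k)$, which by~\eqref{eq:pBk03} is strictly less than $V_0(C_1) - V_0(B_0)$. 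Therefore no orbit can produce an eigenvalue with larger Arrhenius exponent than $V_\gamma(z^\star) - V_\gamma(x^\star)$.

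The third step is to exhibit at least one irreducible representation for which the active orbits force the exponent to equal $V_\gamma(z^\star) - V_\gamma(x^\star)$ rather than something smaller. This is where Propositions~\ref{prop:active_1} and~\ref{prop:active_2} do the work: by direct inspection of the stabilisers~\eqref{eq:gap_arr03} and the character formulas~\eqref{eq:psg02}--\eqref{eq:psg03}, one checks that $O_x = A_2$ is active for suitable choices of $\pi$ (e.g. the two-dimensional representation $\pi_{\ell,+}$ with $\ell$ even, or $\pi_{\ell,-}$ with $\ell$ odd). For any such $\pi$, the block of $L$ restricted to the active part of $A_2$ is non-trivial (it has dimension $d\,\alpha_x^\pi \geqs 1$, and since $A_2$ is not fixed pointwise it is not reduced to the constant/zero mode), so its smallest nonzero eigenvalue has Arrhenius exponent exactly the height of the barrier out of $A_2$, namely $V_\gamma(z^\star) - V_\gamma(x^\star)$. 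Combining this lower bound with the upper bound from the second step yields the corollary. The main obstacle is bookkeeping: one must carefully verify, for every irreducible representation and for the relevant orbits $O_x$, $O_y$, $O_z$, that no active orbit produces a deeper effective well, and this requires invoking the hierarchy results and the saddle-height comparisons of Lemma~\ref{lem:saddle_heights} rather than any genuinely new estimate.
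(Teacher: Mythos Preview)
Your proposal is essentially correct and follows the same representation-theoretic framework as the paper, but the mechanism you use in step~3 to exhibit the maximal exponent differs from the paper's primary argument, and your justification there is slightly loose.

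The paper first treats the one-dimensional representations via \cite[Thm.~3.5]{BD15}: inactive orbits are replaced by a cemetery state at the bottom of the hierarchy. Using Proposition~\ref{prop:active_1}, one finds a one-dimensional representation for which $O_x = A_2$ is active but $O_y$ is \emph{inactive} (namely $\pi_{-++}$ when $M$ is even and $\pi_{---}$ when $M$ is odd). With $O_y$ replaced by a cemetery, the communication height from $O_x$ down to the cemetery is exactly $V_\gamma(z^\star) - V_\gamma(x^\star)$, which immediately produces an eigenvalue with that Arrhenius exponent. This is cleaner than your route, because it avoids having to reason about the internal structure of a block where both $O_x$ and $O_y$ are active.

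Your step~3 instead goes through the two-dimensional representations, which the paper also treats (in its second paragraph) via \cite[Thm.~3.9]{BD15}. This works too, but your sentence ``its smallest nonzero eigenvalue has Arrhenius exponent exactly the height of the barrier out of $A_2$'' skips the key input: you should say that for $\pi_{\ell,-}$ with $\ell$ odd (or $\pi_{\ell,+}$ with $\ell$ even) \emph{both} $O_x$ and $O_y$ are active by Proposition~\ref{prop:active_2}, and then \cite[Thm.~3.9]{BD15} guarantees that the communication height $V_\gamma(z^\star) - V_\gamma(x^\star)$ between these two active orbits appears as an Arrhenius exponent. The parenthetical about $A_2$ ``not being fixed pointwise'' is not the relevant point; what matters is that the representation is non-trivial (so the zero eigenvalue is absent from the block) and that both orbits are active. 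Your upper-bound argument in step~2 is more explicit than the paper's, which leaves that direction largely implicit in the hierarchy results.
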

\begin{proof}
\cite[Thm.~3.5]{BD15} provides an algorithm determining the Arrhenius exponents
for each irreducible representation $\pi$ of dimension $1$. They are obtained by
replacing all inactive orbits by a cemetery state, which is at the bottom of
the metastable hierarchy, and ordering all other orbits according to the usual
hierarchy. If $O_x=A_2$ is active and $O_y$ is inactive for $\pi$, then the
largest communication height determining an Arrhenius exponent will be given by
$V_\gamma(z^\star) - V_\gamma(x^\star)$. If $M$ is even, the representation
$\pi_{-++}$ has the required property, while if $M$ is odd, this r\^ole is
played by $\pi_{---}$. 

In the case of $2$-dimensional representations, \cite[Thm.~3.9]{BD15} shows
that all communication heights between active orbits yield Arrhenius exponents.
The largest such exponent is obtained if $O_x$ and $O_y$ are both active, and
Proposition~\ref{prop:active_2} shows that there are representations for which
this is the case. 
\end{proof}


\subsection{Eyring--Kramers prefactor}
\label{ssec_gap_EK}

It remains to find the smallest prefactor associated with a transition of
communication height $V_\gamma(z^\star) - V_\gamma(x^\star)$. In the case of
one-dimensional representations, \cite[Prop.~3.4]{BD15} shows that the usual
Eyring--Kramers law given in Theorem~\ref{thm:BGK} has to be corrected by a
factor $\abs{G_x}/\abs{G_x\cap G_y} = 4$ (cf.~\eqref{eq:gap_arr04}). 

In the case of two-dimensional irreducible representations $\pi_{\ell,\pm}$, the
relevant matrix elements are given in~\cite[Prop.~3.7]{BD15}. Alternatively, one
can compute these elements \lq\lq by hand\rq\rq\ in the following way. We start
by ordering the elements of the two orbits $O_x$ and $O_y$ according to 
\begin{align}
\nonumber
O_x &= \set{x^\star,rx^\star,\dots,r^{N-1}x^\star}\;, \\
O_y &= \set{y^\star,ry^\star,\dots,r^{N-1}y^\star,
r^Mcy^\star,r^{M+1}cy^\star,\dots,r^{M-1}cy^\star}\;. 
\label{eq:gap_EK01} 
\end{align}
The restriction of $L$ to $O_x\cup O_y$ consists in the four blocks 
\begin{align}
\nonumber
L_{xx} &= -4q_x\one_N\;, 
\qquad L_{yy} = -2q_y\one_{2N}\;,\\
L_{xy} &= q_x 
\begin{pmatrix}
1 & 1 &  & (0) & 1 & 1 &  & (0)\\
 & \ddots & \ddots &  &  & \ddots & \ddots & \\
 & (0) & \ddots & 1 &  & (0) & \ddots & 1\\
1 & & & 1 & 1 & & & 1
\end{pmatrix}\;, 
&L_{yx} &= q_y
\begin{pmatrix}
1 &  &  & 1\\
1 & \ddots & (0)  & \\
 & \ddots & \ddots & \\
(0) & & 1 & 1\\
1 &  &  & 1\\
1 & \ddots & (0)  & \\
 & \ddots & \ddots & \\
(0) & & 1 & 1
\end{pmatrix}\;,
\label{eq:gap_EK02} 
\end{align}
where $\one_n$ denotes the $n\times n$ identity matrix, 
$(0)$ stands for repeated zero entries, and 
\begin{equation}
 \label{eq:gap_EK03}
 q_x = 
 \frac{\abs{\lambda_-(z^\star)}}{2\pi} 
 \sqrt{\frac{\det\nabla^2V_\gamma(x^\star)}
 {\det\nabla^2V_\gamma(z^\star)}}
\e^{-\brak{V_\gamma(z^\star)-V_\gamma(x^\star)}/\eps} 
\bigbrak{1 + \Order{\eps^{1/2}\abs{\log\eps}^{3/2}}}\;,
\end{equation} 
while $q_y$ is a positive constant of order
$\e^{-\brak{V_\gamma(z^\star)-V_\gamma(y^\star)}/\eps}$. 
Equation~(3.14) in~\cite{BD15} provides a set of vectors spanning the invariant
subspaces associated with a given irreducible representation. Among these, we
have to choose two linearly independent vectors for each orbit. A possible
choice is 
\begin{align}
\nonumber
u^x &= (2,\chi(r),\ldots ,\chi(r^{N-1}),0,\dots,0)\;,\\
\nonumber
u^{rx} &= (\chi(r^{N-1}),2,\chi(r),\ldots ,\chi(r^{N-2}),0,\dots,0)\;,\\
\nonumber
u^y &= (0,\dots,0, 2,\chi(r), \ldots, \chi(r^{N-1}), 2, \chi(r), \ldots,
\chi(r^{N-1}))\;,\\
u^{ry} &=
(0,\dots,0,\chi(r^{N-1}),2,\chi(r),\ldots, \chi(r^{N-2}), \chi(r^{N-1}), 2,
\chi(r),
\ldots, \chi(r^{N-2}))\;,
\label{eq:gap_EK04} 
\end{align}
where $\chi = \chi_{\ell,\pm}$ is given by~\eqref{eq:psg03}. In this basis, $L$
takes the block form
\begin{align}
\nonumber
L_{xx}^\pi &= -q_x
\begin{pmatrix}
4 & 0 \\
0 & 4
\end{pmatrix}\;,
&L_{yy}^\pi &= -q_y
\begin{pmatrix}
2 & 0 \\
0 & 2
\end{pmatrix}\;,\\
L_{xy}^\pi &= q_x 
\begin{pmatrix}
2(\chi(r)+1) & 2 \\
-2 & 2
\end{pmatrix}\;,
&L_{yx}^\pi &= q_y 
\begin{pmatrix}
1 & -1 \\
1 & (\chi(r)+1)
\end{pmatrix}\;.
\label{eq:gap_EK05} 
\end{align}
We can now apply~\cite[Thm.~3.9]{BD15}, which states that the eigenvalues are
equal to those of 
\begin{equation}
L_{xx}^\pi-L_{xy}^\pi\bigpar{L_{yy}^\pi}^{-1}L_{yx}^\pi =
-4\sin^2\biggpar{\frac{\ell\pi}{N}}q_x\one_2\;.
\label{eq:gap_EK06} 
\end{equation} 
The result~\eqref{eq:sg01} follows, since the minimal value of the eigenvalues
is reached for $\ell=1$, both orbits are active for the representation
$\pi_{1,-}$, and this value is smaller than for all one-dimensional
representations.

\begin{remark}
It is of course possible to obtain the same result directly from the
expressions~(3.15) and~(3.17) given in~\cite{BD15} for the inner products 
$\pscal{u}{Lv}$, where $u$ and $v$ are basis vectors among~\eqref{eq:gap_EK04},
even though these vectors are not orthogonal. It suffices to use the fact that
the matrix elements of $L$ can be obtained by computing 
\begin{equation}
 \label{eq:gap_EK07}
 \begin{pmatrix}
 \vrule height 8pt depth 14pt width 0pt
 1 & \dfrac{\pscal{u}{v}}{\pscal{u}{u}} \\
 \vrule height 14pt depth 8pt width 0pt
 \dfrac{\pscal{u}{v}}{\pscal{v}{v}} & 1
 \end{pmatrix}^{-1}
 \begin{pmatrix}
 \vrule height 8pt depth 14pt width 0pt
 \dfrac{\pscal{u}{Lu}}{\pscal{u}{u}}
 & \dfrac{\pscal{u}{Lv}}{\pscal{u}{u}} \\
 \vrule height 14pt depth 8pt width 0pt
 \dfrac{\pscal{v}{Lu}}{\pscal{v}{v}} 
 & \dfrac{\pscal{v}{Lv}}{\pscal{v}{v}}
 \end{pmatrix}
\end{equation} 
where for instance $\pscal{u^x}{u^{rx}} =
\cos(\ell\pi/M)\pscal{u^x}{u^x}$.~$\lozenge$
\end{remark}

The last element of the proof of Theorem~\ref{thm:spectral_gap} is the
following result on the Hessian matrices of $V_0$. 

\begin{prop}
\label{prop:hessian_V0}
The Hessian matrices of $V_0$ at $x^\star$ and $z^\star$ satisfy 
\begin{align}
\nonumber
 \det\nabla^2 V_0(x^\star) &= 2^{N-1}\;, \\
\nonumber
 \det\nabla^2 V_0(z^\star) 
 &= -\frac{M^{M-2}(M-3)^M(2M-3)^{2M-2}}{(M^2-3M+3)^{2M-2}} 
 = -2^{N-2}\bigbrak{1+\Order{N^{-1}}}\;,\\
 \lambda_-(z^\star) 
 &= -\frac{(M-3)(2M-3)}{2(M^2-3M+3)} 
 = -1 + \Order{N^{-1}}\;.
 \label{eq:p-hess01}
\end{align} 
\end{prop}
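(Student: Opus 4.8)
The plan is to reduce everything to the unconstrained, \emph{diagonal} Hessian. At $\gamma=0$ the Hessian of the full potential~\eqref{eq:model01} at a stationary point $w$ is $D=\diag\bigpar{3w_1^2-1,\dots,3w_N^2-1}$, and, since the constraint surface $S$ is affine, the Hessian of $V_0$ restricted to $S$ at $w\in S$ is simply the symmetric operator $v\mapsto PDv$ on the tangent space $\vone^\perp$, with $P=\one-\frac1N\vone\vone^{\mathrm T}$. I will use two elementary facts about such a restriction. First, for a unit vector $n$ one has $\det\bigpar{PDP|_{n^\perp}}=\det(D)\,\pscal{n}{D^{-1}n}$; this follows by comparing leading orders as $t\to\infty$ in the matrix--determinant--lemma identity $\det(D+tnn^{\mathrm T})=\det(D)\bigpar{1+t\pscal{n}{D^{-1}n}}$. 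Taking $n=\vone/\sqrt N$ gives
\begin{equation*}
 \det\nabla^2V_0(w)\big|_S=\frac1N\Bigpar{\sum_{i=1}^N\frac1{d_i}}\prod_{i=1}^N d_i\;,\qquad d_i:=3w_i^2-1\;.
\end{equation*}
Second, besides each eigenvalue of $D$ with its multiplicity lowered by one (eigenvectors: zero-sum vectors inside a degenerate block), the eigenvalues of $v\mapsto PDv$ on $\vone^\perp$ are exactly the roots of the secular equation $\sum_i(d_i-\mu)^{-1}=0$.

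For $x^\star\in A_2$ all coordinates equal $\pm1$, so $D=2\,\one_N$, the restricted Hessian is $2\,\one_{N-1}$, and $\det\nabla^2V_0(x^\star)=2^{N-1}$. For $z^\star\in C_1$ with three interfaces the coordinates are $\pm\omega'(1,1-M,M-2)$, $\omega'=(M^2-3M+3)^{-1/2}$, cf.~\eqref{eq:pB0_01}; writing $c:=M^2-3M+3$, the matrix $D$ has one entry $d_0=-M(M-3)/c$ (the unique negative one, consistent with $z^\star$ being a $1$-saddle of the unconstrained potential), $M-1$ entries $d_1=M(2M-3)/c$, and $M$ entries $d_2=(2M-3)(M-3)/c$. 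For the determinant I would substitute $\prod_i d_i=-M^M(M-3)^{M+1}(2M-3)^{2M-1}c^{-2M}$ and $\sum_i d_i^{-1}=2c^2/\bigpar{M(M-3)(2M-3)}$ — the latter obtained by putting the three fractions over the common denominator $M(M-3)(2M-3)$, after which the numerator collapses to $2c$ — into the displayed formula, obtaining $\det\nabla^2V_0(z^\star)=-M^{M-2}(M-3)^M(2M-3)^{2M-2}/(M^2-3M+3)^{2M-2}$. For $\lambda_-(z^\star)$, the secular equation here is the quadratic $(d_1-\mu)(d_2-\mu)+(M-1)(d_0-\mu)(d_2-\mu)+M(d_0-\mu)(d_1-\mu)=0$; substituting $\mu=-d_2/2$ (equivalently $\nu=-c\,d_2/2$ after clearing $c$, with $D_j:=c\,d_j$) gives $D_0-\nu=-\tfrac32(M-3)$, $D_1-\nu=\tfrac32(M-1)(2M-3)$ and $D_2-\nu=\tfrac32D_2$, so a factor $\tfrac94(M-1)(M-3)(2M-3)$ comes out and the remaining factor $(2M-3)-(M-3)-M$ vanishes. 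Hence $-d_2/2=-\tfrac{(M-3)(2M-3)}{2(M^2-3M+3)}$ is an eigenvalue; as $M\geqs4$ it is negative, and since the restricted Hessian has signature $(N-2,1)$ while the block eigenvalues $d_1,d_2$ are positive, it is the unique negative one, i.e.\ $\lambda_-(z^\star)$.

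Finally, the asymptotics $\det\nabla^2V_0(z^\star)=-2^{N-2}[1+\Order{N^{-1}}]$ and $\lambda_-(z^\star)=-1+\Order{N^{-1}}$ follow by taking logarithms and expanding in $1/M$; writing $M^2-3M+3=(M-\tfrac32)^2+\tfrac34$ and $2M-3=2(M-\tfrac32)$ makes the cancellations transparent and matches the form already recorded in~\eqref{eq:sg03}. The one genuinely delicate point is the second elementary fact above and its use: recognising that, away from the block-degeneracy part, the spectrum of the restricted Hessian is governed by the secular equation, and then spotting that $-d_2/2$ is a root of it. Once that is in place, the proposition is just the two routine algebraic simplifications indicated above.
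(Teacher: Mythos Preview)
Your proof is correct and takes a genuinely different route from the paper's. The paper works geometrically: it identifies the invariant subspaces $S_1=\{x_1+\dots+x_{M-1}=0,\ x_M=\dots=x_N=0\}$ and $S_2=\{x_1=\dots=x_M=0,\ x_{M+1}+\dots+x_N=0\}$, reads off the eigenvalues $U''(\alpha'_1)$ and $U''(\alpha'_2)$ with multiplicities $M-2$ and $M-1$, and then writes down an explicit orthonormal basis $\hat v,\hat w$ for the complementary $2$-plane, computes the resulting $2\times2$ matrix, and diagonalises it to find the eigenvalues $2$ and $-\tfrac{(M-3)(2M-3)}{2(M^2-3M+3)}$; the determinant is then the product of all eigenvalues. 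You instead exploit that at $\gamma=0$ the unconstrained Hessian is diagonal and use two general linear-algebra identities for the restriction to $\vone^\perp$: the determinant formula $\det(D)\,\pscal{n}{D^{-1}n}$ (via the matrix determinant lemma) and the secular equation $\sum_i(d_i-\mu)^{-1}=0$ for the ``non-block'' eigenvalues. Your approach avoids constructing an explicit orthonormal basis of the $2$-plane and yields the determinant in one line from $\prod d_i$ and $\sum 1/d_i$; the price is that one must spot the root $-d_2/2$ of the secular quadratic and invoke the known signature $(N-2,1)$ to conclude it is $\lambda_-(z^\star)$, whereas the paper's explicit $2\times2$ matrix delivers both remaining eigenvalues (including the value $2$ of the second one) at once. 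Both arguments are short; yours is more systematic and would transpose unchanged to other families $C_k$, while the paper's makes the eigenspace structure visible.
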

\begin{proof}
We have already obtained invariant subspaces of the Hessian matrices in the
proof of Proposition~\ref{prop_saddles}. However, since we used a non-isometric
parametrisation of $S$, we cannot use expressions such as~\eqref{eq:pl0_05}
directly to determine the eigenvalues. 

In the case of $x^\star=(1,\dots,1,-1,\dots,-1)$, it is sufficient to note that
for any vector $u$ of unit length in $S$, one has 
\begin{equation}
 \label{eq:p-hess02}
 \frac{\6^2}{\6t^2} V_0(x^\star + tu) \biggr\vert_{t=0}
 = \sum_{i=1}^M u_i^2 U''(1) + \sum_{i=M+1}^N u_i^2 U''(-1)
 = 2\;,
\end{equation} 
showing that in fact $\nabla^2 V_0(x^\star)=2\one_{N-1}$, which has determinant
$2^{N-1}$.  

In the case of the saddle $z^\star$ given by~\eqref{eq:gap_arr02}, the
expressions~\eqref{eq:alpha_j} for the $\alpha'_j$ yield 
\begin{alignat}{3}
\nonumber
 U''(\alpha'_0) &= 3(\alpha'_0)^2-1 
 &&= -\frac{M(M-3)}{M^2-3M+3}
 &&= -1 + \Order{M^{-2}}\;, \\
\nonumber
 U''(\alpha'_1) &= 3(\alpha'_1)^2-1 
 &&= \frac{M(2M-3)}{M^2-3M+3}
 &&= 2 + 3M^{-1} + \Order{M^{-2}}\;, \\
 U''(\alpha'_2) &= 3(\alpha'_2)^2-1 
 &&= \frac{(M-3)(2M-3)}{M^2-3M+3}
 &&= 2 - 3M^{-1} + \Order{M^{-2}}\;. 
 \label{eq:p-hess03}
\end{alignat} 
We know from the proof of Proposition~\ref{prop_saddles} that the
($M-2$)-dimensional subspace of $S$ given by 
$S_1=\set{x_1+\dots+x_{M-1}=0, x_M=\dots=x_N=0}$ is invariant by the Hessian.
Proceeding as in~\eqref{eq:p-hess02} with a unit vector $u\in S_1$ shows that
$U''(\alpha'_1)$ is an eigenvalue of $\nabla^2 V_0(z^\star)$ of multiplicity
$M-2$. In an analogous way, the ($M-1$)-dimensional invariant subspace of $S$
given by $S_2=\set{x_1=\dots=x_M=0, x_{M+1}+\dots+x_N=0}$ carries the
eigenvalue $U''(\alpha'_2)$ with a multiplicity $M-1$. This leaves a
two-dimensional invariant subspace, for which we may choose the orthonormal
basis given by the vectors 
\begin{align}
\nonumber
\hat v &= \tfrac{1}{\sqrt{2M}} (1, \dots, 1, 1, -1, \dots, -1)\;, \\
\hat w &= \tfrac{1}{\sqrt{M(M-1)}} (-1, \dots, -1, M-1, 0, \dots, 0)\;.
\label{eq:p-hess04}
\end{align}
The Hessian at $(0,0)$ of the map $(t,s)\mapsto V_0(z^\star+t\hat
v+s\hat w)$ is found to be the matrix
\begin{equation}
 \label{eq:p-hess05}
 \frac{1}{2(M^2-3M+3)}
 \begin{pmatrix}
 4M^2 -15M + 15 & -3\sqrt{2(M-1)}(M-2) \\ 
 -3\sqrt{2(M-1)}(M-2) & -2(M^2-6M+6)
 \end{pmatrix}\;,
\end{equation} 
which has eigenvalues 
\begin{equation}
 \label{eq:p-hess06}
 2 
 \qquad \text{and} \qquad
 - \frac{(M-3)(2M-3)}{2(M^2-3M+3)}
 = -1 + \Order{N^{-1}}\;.
\end{equation}
The result follows via a Taylor expansion of
$\log(-\det\nabla^2V_0(z^\star))$. 
\end{proof}



{\small
\bibliography{BD}
\bibliographystyle{abbrv}               
}


\tableofcontents

\vfill

\bigskip\bigskip\noindent
{\small
Universit\'e d'Orl\'eans, Laboratoire {\sc Mapmo} \\
{\sc CNRS, UMR 7349} \\
F\'ed\'eration Denis Poisson, FR 2964 \\
B\^atiment de Math\'ematiques, B.P. 6759\\
45067~Orl\'eans Cedex 2, France \\
{\it E-mail addresses: }{\tt nils.berglund@univ-orleans.fr}, 
{\tt sebastien.dutercq@univ-orleans.fr}


\end{document}